\let\mathbb\mathds
\DeclareMathAlphabet\oldmathcal{OMS}        {cmsy}{b}{n}
\SetMathAlphabet    \oldmathcal{normal}{OMS}{cmsy}{m}{n}
\DeclareMathAlphabet\oldmathbcal{OMS}       {cmsy}{b}{n}
\newtheorem{theorem}{Theorem}[section]
\newtheorem{lemma}[theorem]{Lemma}
\newtheorem{proposition}[theorem]{Proposition}
\newtheorem{corollary}[theorem]{Corollary}
\newtheorem{definition}[theorem]{Definition}
\newtheorem{question}[theorem]{Question}
\newenvironment{example}{\medskip \refstepcounter{theorem}
\noindent  {\bf Example \thetheorem}.\rm}{\,}
\newenvironment{remark}{\medskip \refstepcounter{theorem}
\newcommand     {\comment}[1]   {}
\newcommand{\mute}[2] {}
\newcommand     {\printname}[1] {}

\noindent  {\bf Remark \thetheorem}.\rm}{\,}
\newtheorem*{ack}{Acknowledgements}
\newtheorem{prop}[theorem]{Proposition}
\def\<{\langle}
\def\>{\rangle}
\def\BOne{{\mathchoice {\rm 1\mskip-4mu l} {\rm 1\mskip-4mu l}
                          {\rm 1\mskip-4.5mu l} {\rm 1\mskip-5mu l}}}
\def\fract#1#2{\raise4pt\hbox{$ #1 \atop #2 $}}
\def\decdnar#1{\phantom{\hbox{$\scriptstyle{#1}$}}
\left\downarrow\vbox{\vskip15pt\hbox{$\scriptstyle{#1}$}}\right.}
\def\bbc{{\mathbb C}}
\def\bbn{{\mathbb N}}
\def\bbp{{\mathbb P}}
\def\bbq{{\mathbb Q}}
\def\bbr{{\mathbb R}}
\def\bbz{{\mathbb Z}}
\def\gra{\alpha}
\def\grb{\beta}
\def\grd{\delta}
\def\grg{\gamma}
\def\grk{\kappa}
\def\grl{\lambda}
\def\gro{\omega}
\def\grr{\rho}
\def\grt{\tau}
\def\grz{\zeta}
\def\grD{\Delta}
\def\grG{\Gamma}
\def\grS{\Sigma}
\def\bfv{{\bf v}}
\def\bfw{{\bf w}}
\def\cala{{\mathcal A}}
\def\calc{{\mathcal C}}
\def\calo{{\mathcal O}}
\def\cald{{\mathcal D}}
\def\calf{{\mathcal F}}
\def\cali{{\mathcal I}}
\def\calm{{\mathcal M}}
\def\calo{{\mathcal O}}
\def\calr{{\mathcal R}}
\def\cals{{\oldmathcal S}}
\def\la#1{\hbox to #1pc{\leftarrowfill}}
\def\ra#1{\hbox to #1pc{\rightarrowfill}}
\def\calz{{\oldmathcal Z}}
\def\ga{{\mathfrak a}}
\def\ge{{\mathfrak e}}
\def\gm{{\mathfrak m}}
\def\gn{{\mathfrak n}}
\def\go{{\mathfrak o}}
\def\gt{{\mathfrak t}}
\def\gu{{\mathfrak u}}
\def\gz{{\mathfrak z}}
\def\gA{{\mathfrak A}}
\def\gB{{\mathfrak B}}
\def\gC{{\mathfrak C}}
\def\gH{{\mathfrak H}}
\def\gJ{{\mathfrak J}}
\def\gL{{\mathfrak L}}
\def\gM{{\mathfrak M}}
\def\gQ{{\mathfrak Q}}
\def\gR{{\mathfrak R}}
\def\hook{\mathbin{\hbox to 6pt{%
                 \vrule height0.4pt width5pt depth0pt
                 \kern-.4pt
                 \vrule height6pt width0.4pt depth0pt\hss}}}
\def\cJ{\hat{J}}
\def\tphi{\tilde{\phi}}
\def\th{\tilde{h}}
\def\12{\xi_{k_1,k_2}}
\def\m5{M^5_{k_1,k_2}}
\begin{document}

\title{Extremal Sasakian Geometry on $S^3$-bundles over Riemann Surfaces}

\author{Charles P. Boyer and Christina W. T{\o}nnesen-Friedman}\thanks{Both authors were partially supported by grants from the Simons Foundation, CPB by (\#245002) and CWT-F by (\#208799)}
\address{Charles P. Boyer, Department of Mathematics and Statistics,
University of New Mexico, Albuquerque, NM 87131.}
\email{cboyer@math.unm.edu} 
\address{Christina W. T{\o}nnesen-Friedman, Department of Mathematics, Union
College, Schenectady, New York 12308, USA } \email{tonnesec@union.edu}

\keywords{Extremal Sasakian metrics, extremal K\"ahler metrics, join construction}

\subjclass[2000]{Primary: 53D42; Secondary:  53C25}

\maketitle

\markboth{Extremal Sasakian Geometry on $S^3$-bundles}{Charles P. Boyer and Christina W. T{\o}nnesen-Friedman}

\begin{abstract}
In this paper we study the Sasakian geometry on $S^3$-bundles over a Riemann surface $\grS_g$ of genus $g>0$ with emphasis on extremal Sasaki metrics. We prove the existence of a countably infinite number of inequivalent contact structures on the total space of such bundles that admit 2-dimensional Sasaki cones each with a Sasaki metric of constant scalar curvature (CSC). This CSC Sasaki metric is most often irregular. We further study the extremal subset $\ge$ in the Sasaki cone showing that if $0<g\leq 4$ it exhausts the entire cone. Examples are given where exhaustion fails.

\end{abstract}

\tableofcontents

\section{Introduction}
The purpose of this paper is to study the existence and non-existence of extremal Sasaki metrics on $S^3$-bundles over a Riemann surface of genus $g\geq 1$. It can be considered as a continuation of \cite{BoTo11} where the genus $g=1$ case was treated for the case of the trivial product structure $T^2\times S^3$. In this sequel we also study the non-trivial $S^3$-bundle over $T^2$. The genus $0$ case, however, will not be treated here. It has been studied in \cite{Boy10b,Leg10,BoPa10}. 

The problem of finding the ``best'' K\"ahler metrics in a given K\"ahler class on a K\"ahler manifold has a long and well developed history beginning with Calabi \cite{Cal82}. He introduced the notion of an {\it extremal K\"ahler metric} as a critical point of the $L^2$-norm squared of the scalar curvature of a K\"ahler metric, and showed that extremal K\"ahler metrics are precisely those such that the $(1,0)$-gradient of the scalar curvature is a holomorphic vector field. Particular cases are K\"ahler-Einstein metrics and more generally those of {\it constant scalar curvature (CSC)}. Since the works of Donaldson and Uhlenbeck-Yau \cite{Don85,UhYa86} the idea that such K\"ahler metric are related to the notion of stability (in some sense) of certain vector bundles has become a major area of study, see the recent survey \cite{PhSt10} and references therein.

On the other hand in the case of the odd dimensional version, namely Sasakian geometry, aside from the study of Sasaki-Einstein metrics, the study of extremal Sasaki metrics is much more recent \cite{BGS06,BGS07b,Boy10b,BoTo11,BoTo12b}. It is well known that a Sasaki metric has constant scalar curvature if and only if the transverse K\"ahler metric has constant scalar curvature. So for quasi-regular Sasaki metrics the existence of CSC Sasaki metrics coincides with the existence of CSC K\"ahler metrics on the quotient cyclic orbifolds $\calz$. In this case a result of Ross and Thomas \cite{RoTh11} says that the existence of a CSC Sasaki metric on $M$ implies the K-semistability of the corresponding polarized K\"ahler orbifold $(\calz,L)$. This result has been recently generalized to include irregular Sasaki metrics by Collins and Sz\'ekelyhidi \cite{CoSz12} which says that a CSC Sasaki metric implies the K-semistability of the corresponding K\"ahler metric on the cone $C(M)=M\times \bbr^+$. However, the cone metric on $C(M)$ does not generally have constant scalar curvature. 

The Sasaki-Einstein case is by far the best studied. If the first Chern class $c_1(\cald)$ of the contact bundle $\cald$ vanishes, then a Sasaki metric $g$ with contact bundle $\cald$ has constant scalar curvature if and only if $g$ is Sasaki-$\eta$-Einstein (see \cite{BGM06,BGS06} and references therein). So the analogues of Sasaki-Einstein and more generally Sasaki-$\eta$-Einstein metrics when $c_1(\cald)\neq 0$ are the CSC Sasaki metrics. This leads one to the following interesting questions:

\begin{question}\label{ques}
Given a Sasakian manifold $M$. When is there a ray of constant scalar curvature in the Sasaki cone? 
\end{question}

Of course it is quite easy to construct such examples of CSC Sasaki metrics. Just take a polarized K\"ahler manifold (or orbifold) $\calz$ with constant scalar curvature and use Boothby-Wang \cite{BoWa,BG00a} to construct the Sasaki $S^1$-bundle $M$ over it. Then $M$ will have a Sasaki metric of constant scalar curvature. However, suppose that the initial K\"ahler manifold (orbifold) does not have constant scalar curvature, can one deform in the Sasaki cone to obtain Sasaki metrics of constant scalar curvature? The first example showing that the latter was possible was given by the physicists Gauntlett, Martelli, Sparks, and Waldram \cite{GMSW04a}. Here one considers the blow-up of $\bbc\bbp^2$ at a point polarized by the anti-canonical line bundle, that is, $c_1$ is proportional to the K\"ahler class $[\gro]$. It is well known not to have a K\"ahler-Einstein (hence, CSC) metric. The Boothby-Wang construction here gives a Sasakian structure on $S^2\times S^3$ with $c_1(\cald)=0$ and a 3-dimensional Sasaki cone (the toric case). Nevertheless, in \cite{GMSW04a} it is shown by an explicit construction of the metric that there is an irregular Sasaki-Einstein metric in the Sasaki cone\footnote{Actually, they construct a countable infinity of such Sasaki-Einstein metrics one of which is the one described here. Some of these metrics are quasi-regular.}. This idea was then generalized to any toric contact structure of Reeb type with $c_1(\cald)=0$ by Futaki, Ono, and Wang \cite{FOW06}. They proved that for any such contact structure one can deform in the Sasaki cone to obtain a Sasaki-Einstein metric. Moreover, this Sasaki-Einstein metric is unique \cite{CFO07}. 

Much less is known when $c_1(\cald)\neq 0$. However, Legendre \cite{Leg10} has shown that for the toric structures on $S^3$-bundles over $S^2$ and certain quotients, one can always deform in the Sasaki cone to obtain a ray of Sasaki metrics of constant scalar curvature. Moreover, she shows that uniqueness fails, that is, there are examples with more than one ray of CSC Sasaki metrics.

A main result of the present paper is
\begin{theorem}\label{thm1}
Let $M$ be the total space of an $S^3$-bundle over a Riemann surface $\grS_g$ of genus $g\geq 1$. Then for each $k\in \bbz^+$ the 5-manifold $M$ admits a contact structure $\cald_k$ of Sasaki type consisting of $k$ 2-dimensional Sasaki cones $\grk(\cald_k,J_m)$ labelled by $m=0,\ldots,k-1$ each of which admits a unique ray of Sasaki metrics of constant scalar curvature such that the transverse K\"ahler structure admits a Hamilitonian 2-form. Moreover, if $M$ is the trivial bundle $\grS_g\times S^3$, the $k$ Sasaki cones belong to inequivalent $S^1$-equivariant contact structures.
\end{theorem}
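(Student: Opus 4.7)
The plan is to realize $M$ as a Sasaki join of a Boothby--Wang circle bundle over $\grS_g$ with a weighted $3$-sphere, and to analyse the resulting transverse geometry using the admissible K\"ahler construction of Apostolov--Calderbank--Gauduchon--T{\o}nnesen-Friedman. First, let $N_l\to\grS_g$ denote the Boothby--Wang $S^1$-bundle whose Euler class has degree $l$, equipped with its canonical regular Sasaki structure of constant transverse scalar curvature, and let $S^3_{\mathbf{w}}$ be the weighted $3$-sphere with weight vector $\mathbf{w}=(w_1,w_2)$. Form the Sasaki join $M_{l_1,l_2,\mathbf{w}}=N_l\star_{l_1,l_2}S^3_{\mathbf{w}}$, the quotient of $N_l\times S^3$ by a free $S^1$-action that is vertical for the projection to $\grS_g$. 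Consequently $M_{l_1,l_2,\mathbf{w}}$ is an $S^3$-bundle over $\grS_g$. Varying the integer data of the join produces a countable family of contact structures $\cald_k$ of Sasaki type on $M$, and a further discrete choice (selecting $\mathbf{w}_m$ from a natural $k$-term family) yields $k$ distinct CR structures $J_m$ sharing the same contact bundle $\cald_k$. The residual $T^2$-symmetry on $S^3_{\mathbf{w}}$ descends to $M$ and, together with the Reeb vector, spans the $2$-dimensional Sasaki cone $\grk(\cald_k,J_m)$.

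The second step is to analyse the transverse K\"ahler geometry. For any quasi-regular Reeb vector in $\grk(\cald_k,J_m)$ the transverse quotient is an orbifold projective line bundle $\bbp(\calo\oplus L)\to\grS_g$ carrying an admissible K\"ahler metric in the ACGT sense. Such admissible metrics automatically admit a Hamiltonian $2$-form of order one, which yields the Hamiltonian $2$-form assertion in the theorem. Existence of a CSC ray in $\grk(\cald_k,J_m)$ now reduces to the admissible CSC ODE: an explicit boundary-value problem for a smooth positive function on $[-1,1]$ whose associated polynomial integral must vanish. After quotienting out the overall scaling, the CSC condition becomes a single real equation in the ratio $x$ of the two Reeb-field components. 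A boundary analysis at the two endpoints of the admissible interval in $x$ shows that, provided $g\geq 1$, the scalar-curvature functional takes values of opposite sign there, so the intermediate value theorem produces at least one CSC ray. A monotonicity estimate on the relevant polynomial — one that uses the positive genus in an essential way — then promotes existence to uniqueness.

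Finally, when $M\cong \grS_g\times S^3$ is the trivial bundle, I would distinguish the $\cald_k$ as $S^1$-equivariant contact structures (the $S^1$ being the join circle intrinsic to the construction) by an $S^1$-equivariant invariant: the first Chern class $c_1(\cald_k)$ read off from the standard join formula depends linearly on $k$, so distinct values of $k$ produce inequivalent $S^1$-equivariant contact structures. The main obstacle is the CSC existence-and-uniqueness of paragraph two. The admissible framework reduces the problem to a transparent polynomial/ODE computation, but verifying the sign and positivity conditions needed for a root to exist \emph{and} lie in the open admissible interval, uniformly over all integer parameters $k,m,l$ and for every $g\geq 1$, takes real work; in particular the monotonicity upgrade to uniqueness hinges on the sign of the leading coefficient of the admissible polynomial, which is precisely where the hypothesis $g\geq 1$ enters.
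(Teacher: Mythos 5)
Your first two steps track the paper's own argument closely: the join $M^3_g\star_{l,1}S^3_{\bfw}$ of the Boothby--Wang circle bundle with a weighted $3$-sphere, the identification of the quasi-regular transverse quotients with orbifold pseudo-Hirzebruch surfaces carrying admissible metrics (hence a Hamiltonian $2$-form of order one), and the reduction of the CSC condition to a single polynomial equation in the ratio $c=v_2/v_1$. In the paper that equation is the cubic
$$lw_1^2c^3+(g-1+2lw_2)w_1c^2-(g-1+2lw_1)w_2c-lw_2^2=0,$$
and for $g\geq 1$ its coefficient signs give exactly one positive root, which is your ``sign at the endpoints plus monotonicity'' argument; moreover on the CSC ray $h(\gz)$ is affine with $h(\pm1)>0$, so the positivity condition is automatic there. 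That part of your plan is sound and is essentially the paper's route.

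The genuine gap is in your last paragraph. The theorem does not ask you to distinguish $\cald_k$ from $\cald_{k'}$ for $k\neq k'$ (which indeed follows from $c_1(\cald_k)=(2-2g-2k)\grg$); it asserts that, for a \emph{fixed} $k$, the $k$ Sasaki cones $\grk(\cald_k,J_m)$, $m=0,\ldots,k-1$, all living on the \emph{same} contact structure $\cald_k$ on $\grS_g\times S^3$, belong to pairwise inequivalent $S^1$-equivariant contact structures. The first Chern class of $\cald_k$ is independent of $m$ (it depends only on $l|\bfw|=2k$), so it cannot separate these; in fact the paper proves (Proposition \ref{ineqcontact}) that the underlying contact structures for different $m$ are all contactomorphic. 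What has to be shown is that the corresponding circle actions give non-conjugate maximal tori in the contactomorphism group. The paper does this by descending to the symplectic quotient $(\grS_g\times S^2,\gro_{1,k})$ and invoking Bu{\c{s}}e's equivariant Gromov--Witten invariants, which yield $EGW(N^g_k,A-mF)=\pm u^{2m+g-1}$ and hence distinguish the Hamiltonian circle actions $\tilde{\cala}_{2m}$ for distinct $m$ (Theorem \ref{conjmaxtori}); one then lifts non-conjugacy to the contactomorphism group via Lerman--Boyer. Some such conjugacy-class invariant is unavoidable here, and your proposal supplies no substitute for it.
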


We refer to the unique CSC ray given by this theorem as the {\it admissible CSC ray} and the corresponding Sasakian structures as {\it admissible Sasakian structures}.

\begin{remark}\label{irrrem}
When $m>0$ most admissible CSC Sasakian structures are irregular, that is not quasi-regular. For the trivial bundle $\grS_g\times S^3$ with $m=0$ the CSC Sasakian structure is regular.
\end{remark}

When the Sasaki cones belong to contact structures that are inequivalent as $S^1$-equivariant contact structures, we have the notion of a bouquet of Sasaki cones \cite{Boy10a,BoTo11}. So in the case of $\grS_g\times S^3$ our results give a $k+1$-bouquet $\gB_{k+1}(\cald_k)$, consisting of $k$ 2-dimensional Sasaki cones and one 1-dimensional Sasaki cone on each contact structure $\cald_k$, all of which have CSC rays. Furthermore, these are invariant under certain deformations of the transverse complex structure. We refer to Theorem \ref{sasbouquetthm} for details.

We also have the following
\begin{theorem}\label{sasexh}
Given any genus $g>0$ and non-negative integer $M$, there exists a positive integer $K_{g,M}$ such that for all integers $k\geq K_g$ the  two dimensional Sasaki cones $\grk(\cald_k,J_m)$ in the contact structure $\cald_k$ are exhausted by extremal Sasaki metrics for $m=0,\ldots,M$. In particular, if $0<g\leq 4$ all of the $k$ 2-dimensional Sasaki cones are exhausted by extremal Sasaki metrics for all $k\in\bbz^+$. Hence,  in the case of the trivial bundle $\grS_g\times S^3$ when $2\leq g\leq 4$ the entire $k+1$-bouquet $\gB_{k+1}(\cald_k)$ is exhausted by extremal Sasaki metrics.

\end{theorem}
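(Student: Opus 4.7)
The plan is to reduce the exhaustion question to a one-parameter positivity problem for an explicit ``extremal polynomial'' coming from the admissible K\"ahler ansatz underlying Theorem \ref{thm1}. Every ray in the Sasaki cone $\grk(\cald_k,J_m)$ is generated by a transverse K\"ahler structure that is admissible in the sense of Apostolov--Calderbank--Gauduchon--T\o nnesen-Friedman on the log pair $(\calz_{k,m},\Delta)$. For such classes the existence of an extremal representative is known to be equivalent to the strict positivity on $(-1,1)$ of a degree-four polynomial $F_{g,k,m,r}(z)$, whose coefficients are rational functions of a single ray parameter $r\in(0,1)$ and depend explicitly on the integers $g,k,m$. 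The exhaustion statement for the Sasaki cone is then precisely the statement that $F_{g,k,m,r}$ is positive on $(-1,1)$ for every $r\in(0,1)$.

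First I would write $F_{g,k,m,r}$ explicitly from the admissible formulas, isolating the genus dependence: the Euler characteristic $2-2g$ of $\grS_g$ enters only through one coefficient, while the remaining coefficients are manifestly positive and scale monotonically with $k$. For fixed $g$ and $M$, the leading $k$-behavior of $F$ is a strictly positive polynomial in $z$, so a compactness-and-continuity argument on the ray parameter (with asymptotic expansions handling the degenerations at $r=0$ and $r=1$) would yield a constant $K_{g,M}$ such that $F_{g,k,m,r}>0$ on $(-1,1)$ for all $r\in(0,1)$, all $0\leq m\leq M$, and all $k\geq K_{g,M}$. This gives the first assertion of the theorem.

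For the range $1\leq g\leq 4$ I would then verify positivity of $F_{g,k,m,r}$ directly for every $k\in\bbz^+$ and every $0\leq m\leq k-1$: the sole negative contribution is controlled by a constant depending on $|2-2g|\leq 6$, while the positive contribution grows with $k$ and with the admissible parameters, and a coefficient-by-coefficient estimation would beat this bound. The bouquet statement for $\grS_g\times S^3$ then follows by assembling the $k$ two-dimensional Sasaki cones from Theorem \ref{thm1} together with the unique regular one-dimensional Sasaki cone, whose transverse K\"ahler quotient $\grS_g\times \bbc\bbp^1$ admits an extremal K\"ahler metric in every K\"ahler class by the classical construction of T\o nnesen-Friedman available when $g\geq 2$. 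The principal obstacle will be the uniform control of $F_{g,k,m,r}$ over the full ray parameter interval: $F$ degenerates at the endpoints $r=0,1$, and for the small genus case one cannot appeal to asymptotics in $k$ but must instead exploit the combinatorial structure of the coefficients to certify positivity throughout the interior.
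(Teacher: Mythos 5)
Your overall reduction is the same as the paper's: both arguments come down to strict positivity on $(-1,1)$ of the quadratic factor $h(\gz)$ in the admissible extremal polynomial $F(\gz)=\frac{(1-\gz^2)h(\gz)}{4pq(3-r^2)}$ of Section \ref{orbisetup}, uniformly over all rays of the cone, followed by the Openness Theorem and density of quasi-regular rays (a step you should state explicitly, since the orbifold-quotient argument only directly covers rational rays). Note also that positivity of this polynomial is only \emph{sufficient} for extremality in the orbifold setting, not "precisely" equivalent, since uniqueness of extremal K\"ahler metrics is unavailable for orbifolds; this does not hurt you here because only sufficiency is needed.

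The genuine soft spot is the mechanism you propose for uniformity over the ray parameter. The cone is parametrized by $c=v_2/v_1\in(0,\infty)$ (equivalently $r=\frac{w_1v_2-w_2v_1}{w_1v_2+w_2v_1}$ ranging over all of $(-1,1)$, not just $(0,1)$), and a compactness-and-continuity argument does not apply directly: the domain $(c,\gz)\in(0,\infty)\times(-1,1)$ is non-compact, and the normalized polynomial $\frac{l(cw_1+w_2)^3}{4v_1}h(\gz)$ degenerates at the corners (e.g.\ it is $O(c^2)$ near $(c,\gz)=(0,1)$), so positivity is not bounded below and the endpoint asymptotics you defer are exactly where the proof lives. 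The paper avoids this entirely by writing that normalized expression as a quartic in $c$ whose five coefficients are explicit quadratics in $\gz$: the $c^0,c^2,c^4$ coefficients are always positive on $(-1,1)$, and the $c^1,c^3$ coefficients are positive precisely when $g\leq 1+3lw_2$ (given $w_1\geq w_2$). Via Lemma \ref{evenHir} ($lw_2=k-m$) this is the single clean inequality $k\geq m+\frac{g-1}{3}$, which yields $K_{g,M}$ for the first assertion and is automatic for $g\leq 4$ (since $lw_2\geq 1$), so no separate small-genus estimation is needed. Finally, for the bouquet statement the extra one-dimensional cone comes from an \emph{irreducible} representation (case (1)(a)), and its ray is extremal because the quotient $\grS_g\times_{\grr'}\bbc\bbp^1$ carries the Narasimhan--Seshadri local-product CSC metric in every K\"ahler class -- not because of a T{\o}nnesen-Friedman-type construction on the trivial product, which instead governs the non-CSC $m>0$ cases.
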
 

On the other hand if we fix $k\in \bbz^+$ and let the genus grow, we can lose extremality. To see this we consider only regular Sasakian structures since in this case we can apply the uniqueness of extremal K\"ahler metrics by Chen and Tian \cite{ChTi05}, which is not yet available in the orbifold setting. So for regular Sasaki metrics we do not need the admissibility assumption discussed above. However, we do know that for genus $g=2,...,19$ the regular ray in the Sasaki cones $\grk(\cald_k,J_m)$ for $m=0,\ldots,k-1$  admits an extremal representative (with non-constant scalar curvature when $m>0$). See Theorem \ref{deg>0ext}.

\begin{theorem}\label{deg>0intro}
For any choice of genus $g=20,21,...$ there exist at least one choice of $(k,m)$ with $m=1,\ldots,k-1$ such that the regular ray in the Sasaki cone $\grk(\cald_k,J_m)$ admits no extremal representative. 
\end{theorem}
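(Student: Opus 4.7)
The plan is to reduce the extremality question on the regular Sasaki ray to the existence of an extremal K\"ahler metric in an admissible class on a geometrically ruled surface over $\grS_g$, and then invoke the non-existence theorem of T{\o}nnesen-Friedman. By the Boothby-Wang correspondence, the regular ray in $\grk(\cald_k,J_m)$ is (up to scale) the Reeb field of a quasi-regular Sasakian structure whose quotient is a smooth polarized K\"ahler manifold $(\calz_{k,m},[\gro_{k,m}])$. Under the admissible construction used in the body of the paper, $\calz_{k,m}$ is a geometrically ruled surface $S_n = \bbp(\calo\oplus L_n)\to \grS_g$ whose degree $n=n(k,m)$ and normalized K\"ahler-class parameter $r\in(0,1)$ are explicit rational functions of $(k,m)$. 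Since uniqueness of extremal K\"ahler metrics holds in the smooth setting \cite{ChTi05}, a regular Sasaki metric on the ray is extremal if and only if the admissible class on $S_n$ admits an extremal K\"ahler representative.

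I then apply the extremal-polynomial criterion of T{\o}nnesen-Friedman, as extended in the general admissible framework of Apostolov--Calderbank--Gauduchon--T{\o}nnesen-Friedman: existence of an extremal K\"ahler metric in an admissible class on $S_n$ is equivalent to the positivity on $(-1,1)$ of an explicit quartic $F(z)$ whose coefficients are rational functions of $g$, $n$, and $r$. By construction $F(\pm 1)=0$, so non-existence is equivalent to $F$ having a root in the open interval. The non-existence locus in $(r,n,g)$-parameter space is known to be nonempty, and a comparison argument shows that it enlarges monotonically with $g$ for fixed $n$ (the contribution of the scalar curvature of $\grS_g$ to the relevant coefficient is linear in $g-1$).

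It then suffices to exhibit, for each $g\geq 20$, a pair $(k,m)$ with $1\leq m\leq k-1$ whose image $(n(k,m),r(k,m))$ lies inside this non-existence region. The threshold $g=20$ arises as follows: for $g\leq 19$ the pairs produced by the construction remain inside the existence region for every admissible choice of $k$ and $m$, which is precisely the content of Theorem \ref{deg>0ext}; at $g=20$ the boundary is first crossed by some explicit $(k,m)$, found by a finite search on small values of $k$; for $g>20$ the same $(k,m)$, or a small perturbation thereof, continues to fail, because the non-existence region strictly grows with $g$.

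The principal obstacle is the explicit sign analysis of $F$ at the threshold $g=20$. The dependence of $F$ on $g$ enters essentially through a single linear factor, so that the transition from existence to non-existence is the outcome of a delicate quartic positivity bound rather than a conceptual dichotomy; carrying it out requires either a careful discriminant estimate or a finite symbolic/numerical check, and the same computation must be leveraged to establish persistence of non-existence for all $g>20$.
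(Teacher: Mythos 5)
Your proposal follows essentially the same route as the paper: reduce the regular ray to an admissible K\"ahler class on the pseudo-Hirzebruch surface $S_n$, use Corollary 1 of \cite{ACGT08} (resting on Chen--Tian uniqueness) to equate extremality with admissible extremality, and then detect failure of positivity of the extremal polynomial on $(-1,1)$ for a suitable $(k,m)$ once $g\geq 20$. The paper's proof is exactly the finite check you defer: it takes $(k,m)=(6,5)$, shows the sign of the minimum of the concave-up parabola $h(\gz)$ is governed by a concave-down quadratic in $g$ (namely $1042+446g-25g^2$ in the even case, $5543+2114g-121g^2$ in the odd case), which is negative for all $g\geq 20$, and verifies that the minimizer lies in $(-1,1)$, so both failure at $g=20$ and persistence for all larger $g$ come out of the same explicit computation.
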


Hence, in the general setting we are interested in how much of the Sasaki cone can be represented by extremal Sasaki metrics. Let $\ge(\cald,J)$ denote the subset of the Sasaki cone $\grk(\cald,J)$ that can be represented by extremal Sasaki metrics. We know from Theorem \ref{thm1} that the {\it extremal set} $\ge(\cald_k,J_m)$ has a unique ray of admissible CSC Sasaki metrics for every $k\in\bbz^+$ and every $m=0,\ldots,k-1$. Furthermore, our main construction in Section \ref{s4} begins by taking the join of a circle bundle over the Riemann surface $\grS_g$ and a weighted Sasaki 3-sphere with weight vector $\bfw=(w_1,w_2)$. It follows from this construction that the Sasaki cones $\grk(\cald_k,J_m)$ contain another extremal ray, which we call the $\bfw$ ray, defined by this weight vector which is not a CSC ray except for the special case when $w_1=w_2$. We also have

\begin{theorem}\label{connthm}
For the 2-dimensional Sasaki cones the admissible CSC ray and the $\bfw$ ray belong to the same connected component $\calc$ of $\ge(\cald,J)$.
\end{theorem}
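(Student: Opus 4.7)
My plan is to connect the admissible CSC ray to the $\bfw$ ray by an explicit one-parameter family of extremal admissible Sasakian structures inside the 2-dimensional Sasaki cone $\grk(\cald_k,J_m)$, then appeal to openness of extremality to place both rays in a common connected component. After quotienting by the natural scaling action on the Reeb field, the admissible rays in this cone form a one-parameter family $\{R_t\}_{t\in I}$ for some open interval $I\subset\bbr$; I let $t=t_{\bfw}$ denote the value corresponding to the $\bfw$ ray and $t=t_{CSC}$ the value corresponding to the CSC ray supplied by Theorem \ref{thm1}.

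The main input is the admissible K\"ahler framework employed in Section \ref{s4}, in the form developed by Apostolov--Calderbank--Gauduchon--T{\o}nnesen-Friedman. For each $t$, the transverse K\"ahler class on the associated ruled projective bundle over $\grS_g$ is admissible, and existence of an extremal representative is equivalent to strict positivity on $(-1,1)$ of an explicit degree-four polynomial $F_t(z)$ whose coefficients depend rationally on $t$ and whose boundary values $F_t(\pm 1)$ and leading coefficient are prescribed by fixed admissibility data. Consequently the set $U:=\{t\in I:F_t>0 \text{ on } (-1,1)\}$ is open in $I$. At $t=t_{\bfw}$, extremality is provided directly by the join construction, and at $t=t_{CSC}$ it is supplied by Theorem \ref{thm1}, so $\{t_{\bfw},t_{CSC}\}\subset U$.

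To finish, I would show that the segment joining $t_{\bfw}$ to $t_{CSC}$ lies entirely in $U$. Since $F_t(\pm 1)$ and the leading coefficient are strictly positive and uniformly bounded away from zero on a neighborhood of this segment, any loss of positivity of $F_t$ on $(-1,1)$ would have to occur through the appearance of a double root in the open interval. Ruling this out reduces to a discriminant analysis of $F_t$ using the closed-form expressions determined by the join parameters and the CSC normalization. I expect this discriminant analysis to be the main obstacle, since extremality at the two endpoints arises from a priori different mechanisms (a product-type structure on the $\bfw$ side and the full transverse CSC equation on the other), and these must be shown compatible along the entire interpolating segment. The admissibility framework is indispensable here, as it converts an infinite-dimensional PDE deformation problem into a one-parameter family of concrete polynomial inequalities, and it is precisely the connectedness of the resulting positivity locus that yields the common component $\calc\subset\ge(\cald,J)$.
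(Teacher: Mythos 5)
Your setup is the right one --- along each ray in the cone the existence of an admissible extremal representative reduces to positivity of an explicit polynomial on $(-1,1)$, and the problem becomes showing that the whole segment of rays between the $\bfw$ ray and the CSC ray lies in the positivity locus. But the decisive step is exactly the one you defer: you write that ruling out a double root ``reduces to a discriminant analysis'' which you ``expect to be the main obstacle,'' and you never carry it out. As it stands the argument only shows that the two endpoint rays lie in the open set $U$, which does not place them in the same connected component. There is also a slip in the setup: since $F(\gz)=\Theta(\gz)(1+r\gz)$ with $\Theta(\pm1)=0$, one has $F(\pm1)=0$, not $F(\pm1)>0$; the quantity that is always positive at the endpoints (by the built-in boundary conditions $\Theta'(\pm1)=\pm 2/p,\mp 2/q$) is the quadratic factor $h(\gz)$ in the factorization $F(\gz)=(1-\gz^2)h(\gz)/\bigl(4pq(3-r^2)\bigr)$, and it is positivity of $h$ on $(-1,1)$, not of the full quartic, that one must control.

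The paper closes the gap with a structural observation rather than a discriminant computation. Writing $c=v_2/v_1$ and clearing positive factors, the coefficient of $\gz^2$ in $h(\gz)$ equals $(cw_1-w_2)$ times the left-hand side of the CSC equation \eqref{cscallgenera}, whose unique positive root is $c_{\bfw}\in(w_2/w_1,1)$. Hence for every $c$ strictly between $w_2/w_1$ (the $\bfw$ ray) and $c_{\bfw}$ (the CSC ray) this coefficient is negative, so $h$ is a concave-down parabola; together with $h(\pm1)>0$ this forces $h>0$ on all of $[-1,1]$, with no case analysis. The $\bfw$ ray itself ($\bfv=\bfw$, where the admissible $h$ is undefined) is extremal via the product metric, and density of quasi-regular rays plus the openness theorem of \cite{BGS06} then give that the entire closed wedge of rays from $c=w_2/w_1$ to $c=c_{\bfw}$ lies in $\ge(\cald,J)$, which is the connected set containing both rays. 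If you want to complete your argument, the factorization of the $\gz^2$-coefficient is the missing ingredient; a brute-force discriminant analysis of the quartic $F_t$ is both harder and unnecessary.
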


We recall here from \cite{BGS06} that $\ge(\cald,J)$ is open in $\grk(\cald,J)$, so $\calc$ is an open conical subset in $\grk(\cald,J)$.

The outline of the paper is as follows: In Section \ref{comsurf} we briefly review what is known about complex ruled surfaces. We divide these into 3 types. Although generally the description of all complex structures on ruled surfaces is very complicated, the ones of importance to us are more tractable. They are essentially the K\"ahler structures that admit Hamiltonian Killing vector fields and those induced from a stable rank two vector bundle. We briefly describe the effects of the representation theory of the fundamental group of the Riemann surface on our setup. The main result of this section is a result that uses equivariant Gromov-Witten invariants to show that certain Hamiltonian vector fields are non-conjugate in the group of Hamiltonian isotopies. 

In Section \ref{s4} we present the setup of our Sasakian structures on $S^3$-bundles over a Riemann surface $\grS_g$. This entails the `join' of a circle bundle over $\grS_g$ with a weighted 3-sphere $S^3_\bfw$ with weight vector $\bfw=(w_1,w_2)$. This approach differs from, but is equivalent to, the one used in \cite{BoTo11}. It has the advantage of being more flexible when deforming the Sasakian structures in a Sasaki cone. In this way we treat the non-trivial $S^3$ on an equal footing as the trivial bundle which was not done in \cite{BoTo11}. In particular, we show that all of our two dimensional Sasaki cones have a unique regular ray of Sasakian structures which allows for the aforementioned equivalence with the approach in \cite{BoTo11}.

In Section \ref{sasfam} we describe various families of Sasakian structures which belong to the same isotopy class of contact structures. Some belong to the same CR structure, but with a different characteristic foliation. To obtain extremal representatives we must deform through distinct CR structures, but with the same characteristic foliation. A main result of this section shows that the first Chern class of the contact bundle distinguishes the distinct contact structures.

Finally, in Section \ref{admis} we use the method of Hamiltonian 2-forms developed in \cite{ApCaGa06,ACGT04,ACGT08} to construct our admissible extremal transverse K\"ahlerian structures. The extremal metrics are determined by a certain fourth order polynomial, and the detailed analysis of this polynomial gives the theorems listed above.

\begin{ack}
The authors would like to thank Vestislav Apostolov, David Calderbank, Fred Cohen, Thomas Friedrich, Paul Gauduchon, and Claude LeBrun for many helpful conversations.
\end{ack}

\section{Ruled Surfaces of Genus $g\geq 1$}\label{comsurf}
By a {\it ruled surface} we shall mean what is often called a geometrically ruled surface. It is well known (cf. \cite{McDSa}, pg. 203) that there are precisely two diffeomorphism types of ruled surfaces of a fixed genus $g$. These are the trivial bundle $\grS_g\times S^2$  and non-trivial $S^2$-bundles over $\grS_g$ denoted by $\grS_g\tilde{\times}S^2$, and they are distinguished by their second Stiefel-Whitney class $w_2$. 

\subsection{Complex Structures on $\Sigma_g \times S^2$ and $\grS_g\tilde{\times}S^2$}\label{complexsubsec}
It is well known \cite{Ati55,Ati57,BPV} that all complex structures on ruled surfaces arise by considering them as projectivizations of rank two holomorphic vector bundles $E$ over a Riemann surface $\grS_g$ of genus $g$. Thus if $(M,J)$ is a ruled surface of genus $g$ we can write it as  $(M,J) = {\mathbb P}(E) \rightarrow \Sigma_g$, where $\Sigma_g$ is equipped with a complex structure $J_\tau$ where $\tau\in \calm_g$, the moduli space of complex structures on $\grS_g$.

Nevertheless, the classification of complex structures on a ruled surface is quite complicated \cite{Maru70}, and the moduli space is generally non-Hausdorff. However, a versal deformation space does exist \cite{Suw69,Sei92}, and it will be convenient to divide the vector bundles $E$ into two types, the indecomposable bundles and the decomposable bundles that can be written as the sum of line bundles. In the latter case we can (and shall) by tensoring with a line bundle bring $E$ to the form $\calo\oplus L$ where $\calo$ denotes the trivial line bundle and $L$ is a line bundle of degree $n \in \bbz$. In this case one can think of the complex structures as arising from three sources, the moduli space of complex structures on $\grS_g$, the degree of $L$, and certain irreducible representations of the fundamental group of $\grS_g$. Let us further subdivide our bundles as:
\begin{enumerate}
\item $E$ is indecomposable \begin{enumerate}
\item $E$ is stable 
\item $E$ is non-stable
\end{enumerate}
\item $E= {\mathcal O} \oplus L$, where $L$ is a degree $0$ holomorphic line bundle on $\Sigma_g$ and ${\mathcal O}$ denotes the trivial (holomorphic) line bundle on $\Sigma_g$.
\item $E= {\mathcal O} \oplus L$, where $L$ is a  holomorphic line bundle on $\Sigma_g$ of non-zero degree $n$.
\end{enumerate}
For the rest of the paper we will refer to the above cases as cases (1), (1)(a), (1)(b), (2), and (3). 
Case (1)(a) and case (2)  will be described in detail in Section \ref{repr} via representation spaces. To get a simple example of case (1)(b), one could let $E$ be a non-trivial extension of the trivial line bundle. It should be mentioned that our division into types is not invariant under deformations. Indeed, two ruled surfaces are deformation equivalent if and only if they are homeomorphic \cite{Sei92}.


Note that if $(M,J)$ is diffeomorphic to $\Sigma_g\times S^2$ then $\deg(E)$ is even (see exercise (8) on page 38 in \cite{Bea96}) or equivalently ${\mathbb P}(E) \rightarrow \Sigma_g$ admits sections of even self-intersection while if $(M,J)$ is diffeomorphic to $\grS_g\tilde{\times}S^2$ then $\deg(E)$ is odd. Since we are dealing with projective bundles of the form $\bbp(E)$ we are free to tensor $E$ with any line bundle.  In particular, we can assume that $E$ has degree $0$ or $1$ in case (1).

We will also denote the complex manifolds occurring in case (2) and (3)  by $S_n$, where $n= \deg L$, and call them {\em pseudo-Hirzebruch surfaces}. We note that $S_{-n}$ is biholomorphic to $S_n$. Beware that unless $g=0$,  $n$ does not uniquely determine the complex structure since there are many different holomorphic line bundles of the same degree. If $n$ is even, $(M,J)$ is diffeomorphic to $\Sigma_g\times S^2$ and we write $n=2m$, while if $n$ is odd, $(M,J)$ is 
diffeomorphic to $\grS_g\tilde{\times}S^2$ and we write $n=2m+1$.

\begin{remark} If $\Sigma_g = T^2$,  then there is, up to biholomorphism, only one case of case (1) with $\deg(E) = 0$, namely (1)(b) and only one case of case (1) with
 $\deg(E)=1$, namely (1)(a). When $\Sigma_g = \bbc\bbp^1$, it is well-known that case (1) does not occur at all. 
\end{remark}

\bigskip

Assume now that $(M,J)$ is a pseudo-Hirzebruch surface $S_n$ with $n \geq 0$.
Let $E_n$ denote the zero  section,  of $M \rightarrow 
\Sigma_g$. Then $E_{n}\cdot E_{n} = n$ (where $n=0$ in case (2)). If $F$ denotes a fiber of the ruling $M \rightarrow 
\Sigma_g$, then $F\cdot F=0$, while $F \cdot E_{n} =1$.
Any real cohomology class in the two dimensional space 
$H^{2}(M, {\mathbb R})$ may be written as a linear combination of 
(the Poincare 
duals of) $E_{n}$ and $F$, 
$$
m_{1} PD(E_{n}) +m_{2}PD(F)\, .
$$
In particular, the K\"ahler cone ${\mathcal K}$ corresponds to $ m_{1} > 0, 
m_{2} > 0$ (see \cite{fuj92} or Lemma 1 in \cite{To-Fr98}). 

Consider the cohomology class $\alpha_{k_1,k_2} \in H^2(M,{\mathbb R})$ given by
\begin{equation}\label{KahclassA}
\alpha_{k_1,k_2} = k_1h + k_2PD(F),
\end{equation}
where $h=PD(E_0) = PD(E_n)-mPD(F)$ when $n=2m$ is even and $h=PD(E_1)=PD(E_n)-mPD(F)$ when $n=2m+1$ is odd. Then it easily follows from the above that $\alpha_{k_1,k_2}$ is a K\"ahler class if and only if $k_1>0$ and $k_2/k_1 > m$. 

The cohomology class in \eqref{KahclassA} is of course defined even if $J$ belongs to case (1) and we have the following general Lemma which is essentially the $rank\,E = 2$ case of 
Proposition 1 in \cite{fuj92} with a slight change of notation.

\begin{lemma}\cite{fuj92}\label{Kahclass}
\begin{itemize}
\item For any $(M,J)$ of case (1)(a) with $\deg(E)$ even,
$\alpha_{k_1,k_2}$ is a K\"ahler class if and only if $k_1,k_2 >0$.
\item For any $(M,J)$ of case (1)(a) with $\deg(E)$ odd,
$\alpha_{k_1,k_2}$ is a K\"ahler class if and only if $k_1 >0$ and $k_2 + \frac{k_1}{2}  > 0$.
\item For any $(M,J)$ of  case (1)(b),  let $\tilde{m}$ equal the (non-negative and well-defined) number $\max(E)-\deg(E)/2$, where $\max(E)$ denotes the maximal degree of a sub line bundle of $E$. 
\begin{itemize}
\item If $\deg(E)$ is even, $\alpha_{k_1,k_2}$ is a K\"ahler class if and only if
$k_1>0$ and $\frac{k_2}{k_1} >\tilde{m}$. 
\item If $\deg(E)$ is odd, $\alpha_{k_1,k_2}$ is a K\"ahler class if and only if
$k_1>0$ and $\frac{k_2}{k_1}+\frac{1}{2} >\tilde{m}$.
\end{itemize}
\item For any $(M,J)$ of case (2) with $\deg(E)$ even,
$\alpha_{k_1,k_2}$ is a K\"ahler class if and only if $k_1,k_2 >0$.
\item For any $(M,J)$ of case (3) above with   $n=2m$ or $n=2m+1$ and $m \in  {\mathbb Z}^+$, 
$\alpha_{k_1,k_2}$ is a K\"ahler class if and only if
$k_1>0$ and $\frac{k_2}{k_1} > m$. 
\end{itemize}
\end{lemma}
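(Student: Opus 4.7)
My plan is to apply the Nakai--Moishezon criterion directly on the projective surface $M = \bbp(E)$: a class $\alpha \in H^{1,1}(M,\bbr)$ is K\"ahler iff $\alpha^2 > 0$ and $\alpha \cdot C > 0$ for every irreducible curve $C$. On a ruled surface every irreducible curve is either a fiber $F$ (with $F^2 = 0$ and $F \cdot \sigma = 1$ for every section $\sigma$) or a ``horizontal'' curve; among horizontal curves, the positivity condition is sharpest on the section of minimal self-intersection. Writing $\alpha = k_1 h + k_2 PD(F)$ and using $h \cdot F = 1$, I immediately get $\alpha \cdot F = k_1$, which forces $k_1 > 0$ in every case. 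The proof then reduces to identifying, in each of the five cases, a section of minimal self-intersection and writing out the inequality $\alpha \cdot \sigma_{\min} > 0$.

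The key input is the standard bijection between sections of $\bbp(E) \to \grS_g$ and rank-one sub-bundles $L \hookrightarrow E$, under which $\sigma_L^2 = \deg(E) - 2\deg(L)$. Setting $\max(E) := \sup\{\deg L : L \hookrightarrow E\}$ (finite by Nagata), the minimal section self-intersection is $\deg(E) - 2\max(E)$. The choice of $h$ in the statement, namely $h = PD(E_n) - m\,PD(F)$ with $m = \lfloor n/2 \rfloor$, is arranged so that $h \cdot \sigma_L$ is an explicit linear function of $\deg L$ up to a parity shift of $0$ or $1/2$; substituting into $\alpha \cdot \sigma_L = k_2 + k_1 (h \cdot \sigma_L)$ at $L$ of maximal degree then produces the announced inequalities. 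In case (1)(a), stability forces $\max(E) < \deg(E)/2$ strictly, so the minimal section is effectively ``non-negative'' and only $k_2 > 0$ (resp.\ $k_2 + k_1/2 > 0$) is needed beyond $k_1 > 0$. In case (1)(b), the destabilizing sub-line-bundle satisfies $\max(E) = \deg(E)/2 + \tilde m$, giving the bound $k_2/k_1 > \tilde m$ (with a $1/2$ shift in the odd case). In cases (2) and (3), the summands $\calo$ and $L$ provide the two distinguished sections $E_0$ and $E_n$, and comparing their self-intersections yields $k_2/k_1 > m$ or simply $k_1, k_2 > 0$ when $n = 0$.

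The main technical obstacle is the parity bookkeeping and, in case (1)(b), the verification that the maximal-degree sub-line-bundle $L_0 \hookrightarrow E$ is essentially unique; otherwise two distinct such $L_0$'s would combine to an isomorphism $L_0 \oplus L_0' \xrightarrow{\sim} E$, contradicting indecomposability, so no section of smaller self-intersection is overlooked. For sufficiency, once $\alpha \cdot F > 0$ and $\alpha \cdot \sigma_{\min} > 0$ are established, Nakai--Moishezon upgrades these to $\alpha \cdot C > 0$ for every irreducible curve (every horizontal $C$ being numerically a non-negative combination of $F$ and $\sigma_{\min}$), and $\alpha^2 = 2k_1 k_2 + k_1^2 (h \cdot h) > 0$ is automatic in the resulting open region. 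Since the statement is essentially the rank-two specialization of Proposition 1 in \cite{fuj92}, the argument above can be viewed as a transparent adaptation of Fujiki's proof to the notation fixed here.
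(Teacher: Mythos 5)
This lemma is not proved in the paper at all: it is quoted verbatim from Proposition~1 of \cite{fuj92} (rank-two case), so your proposal has to stand on its own rather than be compared with an argument in the text. Its skeleton --- Nakai--Moishezon on the projective surface $\bbp(E)$, the bijection between sections and sub-line-bundles with $\sigma_L^2=\deg E-2\deg L$, and the computation $\alpha\cdot\sigma_L=k_2+k_1(h\cdot\sigma_L)$ minimized at a sub-bundle of maximal degree --- is the standard route, and it does give the necessity of every stated inequality (with $k_2>0$, resp.\ $k_2+k_1/2>0$, coming from $\alpha^2>0$ rather than from any section). It also gives sufficiency in cases (1)(b) and (3), and in case (2), where the section of minimal self-intersection has $\sigma_{\min}^2\leq 0$ and really does span, together with $F$, the closed cone of effective curves.

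The genuine gap is in the sufficiency direction for the stable case (1)(a). There every section satisfies $\sigma^2=-2\tilde m\geq 1$, and the parenthetical claim that every irreducible horizontal curve is a non-negative combination of $F$ and $\sigma_{\min}$ is false: the closed effective cone of $\bbp(E)$ is generated by $F$ and the nef ray $\bbr_{\geq 0}h$ (with $h^2=0$ in the even case), which is approached by irreducible multisections $dh+eF$ of arbitrarily small slope $e/d$ and is not spanned by any section. Positivity of $\alpha$ on those multisections is exactly what forces $k_2>0$; it is not implied by $\alpha\cdot F>0$ and $\alpha\cdot\sigma_{\min}>0$, and for the same reason the assertion that ``$\alpha^2>0$ is automatic in the resulting open region'' fails here: $\alpha^2=2k_1k_2$, while the region cut out by the fiber and minimal-section conditions alone still contains classes with $k_2<0$. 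To close this you must either invoke the Miyaoka--Hartshorne criterion (semistability of $E$ is equivalent to nefness of the normalized class $h$, which controls all multisections at once), or argue as the paper does elsewhere: by Narasimhan--Seshadri \cite{NaSe65} a stable $E$ gives $M=\grS_g\times_\grr\bbc\bbp^1$, and the local product forms realize every class with $k_1,k_2>0$ directly as a K\"ahler class, so no curve-by-curve check is needed. (A minor additional point: uniqueness of the maximal sub-line-bundle is not needed, since only the supremum $\max(E)$ enters, and the splitting argument you sketch for it is not correct as stated --- two distinct maximal sub-line-bundles need not be everywhere transverse and hence need not decompose $E$.)
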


\bigskip

When $n=2m$ is even, the ruled surface is $\grS_g\times S^2$, and  $\alpha_{k_1,k_2}$ is represented by
the split symplectic $2$-form 
\begin{equation}\label{splitform}
\omega_{k_1,k_2} = k_1 \omega_0 + k_2 \omega_g,
\end{equation} 
where $\omega_g$ and $\omega_0$ are the standard area measures on $\Sigma_g $ and $S^2$, respectively. 
When $n=2m+1$ the ruled surface is $\Sigma_g \tilde{\times} S^2$ and it follows from a theorem of McDuff \cite{McD94} that $\alpha_{k_1,k_2}$ can be represented by a symplectic $2$-form that is non-degenerate on the fibers of the ruling and is compatible with the given orientation. We also denote this form by $\gro_{k_1,k_2}$, but it does not split as in Equation (\ref{splitform}).
\begin{remark}
Note that $\tilde{m}$ is well-defined in all of the cases of Lemma \ref{KahclassA}. In case (1)(a) it would be negative. In case (2) it would be zero. In case (3) it would be equal to $m$ when $n=2m$ and equal to $\frac{2m+1}{2}$ when $n=2m+1$. With that in mind, one could summarize the 
K\"ahler criterion for \eqref{KahclassA} as
$$k_1>0, \quad k_2> max(0,\tilde{m}k_1)$$
when the diffeomorphism type is  $\Sigma_g \times S^2$, and
$$k_1>0, \quad k_2+k_1/2> max(0,\tilde{m}k_1)$$
when the diffeomorphism type is $\grS_g\tilde{\times}S^2$.
\end{remark}

\subsection{Representation Spaces} \label{repr}
We begin by fixing a complex structure $\tau\in \calm_g$. We know from the work of Narasimhan and Seshadri \cite{NaSe65} that on a compact Riemann surface the unitary irreducible representations of the fundamental group into $PSU(2)$ correspond precisely to the stable rank two holomorphic vector bundles. This corresponds precisely to case (1)(a) above. Case (2) above are the polystable rank two holomorphic bundles that are not stable and correspond precisely to unitary reducible representations of  the fundamental group. Together cases (1)(a) and (2) are all the rank two holomorphic vector bundles over $\grS_g$ that are polystable. Moreover, these are precisely the ruled surfaces that admit a CSC K\"ahler metric \cite{ApTo06}. So the unitary projective representations of the fundamental group $\pi_1(\grS_g)$ correspond precisely to cases (1)(a) and (2) of subsection \ref{complexsubsec} above, (1)(a) are the irreducibles and (2) are the reducibles.

So here we consider representations of $\pi_1(\grS_g)$ in $PSU(2)\approx SO(3)$.  Following  \cite{AkMc90,Sav99} the representation space considered is $R(\grS_g)={\rm Hom}(\pi_1(\grS_g),PSU(2))$ which is given the compact-open topology with the discrete topology on $\pi_1(\grS_g)$ and the usual topology on $PSU(2)$ which we also identify with $\bbr\bbp^3$. Thus, for each $\grr\in R(\grS_g)$ we have the ruled surface $\grS_g\times_\grr\bbc\bbp^1$ obtained as a quotient of $D\times \bbc\bbp^1$ by the group $\pi_1(\grS_g)\times \grr(\pi_1(\grS_g))$ where $D$ is the unit disc in $\bbc$.  A representation $\grr\in {\rm Hom}(\pi_1(\grS_g),PSU(2))$ is {\it reducible} if it lies in a linear subspace in $\bbc^2\supset S^3$, otherwise, it is {\it irreducible}. Thus, $\grr$ is reducible if and only if it lies in a circle in $PSU(2)$. The group of K\"ahler automorphisms of $D\times \bbc\bbp^1$ is $PSU(2)$ acting on the second factor. The automorphisms that pass to the quotient $\grS_g\times_\grr\bbc\bbp^1$ are precisely the ones that commute with $\grr(\pi_1(\grS_g))$, and these will contain an $S^1$ if and only if $\grr$ is reducible. 

Recall that for $g\geq 2$, the fundamental group $\pi_1(\grS_g)$ has the well-known presentation given by generators $<a_1,b_1,\cdots,a_g,b_g>$ and one relation $\prod_{i=1}^g[a_i,b_i]=\BOne$.
So there is a natural map $\psi:R(\grS_g)\ra{1.6} SO(3)^{2g}$ defined by $\psi(\grr)=\bigl(\grr(a_1),\grr(b_1), \cdots,\grr(a_g),\grr(b_g)\bigr)$, and the image realizes the space $R(\grS_g)$ as a subspace of the product $(S^3)^{2g}\subset \bbc^{4g}$ satisfying the equation
$$\prod_{i=1}^g[\grr(a_i),\grr(b_i)]=\BOne.$$
Thus, $R(\grS_g)$ has the structure of a real algebraic variety of dimension $6g-3$. The smooth locus of $R(\grS_g)$ is precisely the open subset of irreducible representations, $R(\grS_g)^{irr}$. Actually, since conjugate representations are equivalent, we are more interested in the {\it character variety} $\calr(\grS_g)$ defined to be the quotient $\calr(\grS_g)=R(\grS_g)/SO(3)$ by conjugation in $SO(3)$. This action is free on the open subset $R(\grS_g)^{irr}$ (clearly, reducible and irreducible are preserved under conjugation), so the dimension of $\calr(\grS_g)$ is $6g-6$. 

Note that when $\grr$ is reducible, it must factor through $H_1(\grS_g,\bbz)\approx \bbz^{2g}$. Furthermore, there are two types of reducible representations, those mapping to an $S^1$ and those mapping to the identity $\BOne$. Clearly, the latter corresponds to the product $\grS_g\times \bbc\bbp^1$ whose automorphism group is $PSU(2)$. The subspace of reducible representations form the singular part of $\calr(\grS_g)$ and has dimension $2g$, and can be identified with the Picard group ${\rm Pic}^0(\grS_g)$ of line bundles on $\grS_g$ of degree $0$ which of course is just the Jacobian torus $T^{2g}$. If $\grr\neq \BOne$ the connected component of the centralizer of $\grr(\pi_1(\grS_g))$ in $PSU(2)$ is $S^1$ which is the connected component of automorphism group of the induced K\"ahler structure. This gives us a Hamiltonian Killing vector field. Summarizing we have

\begin{proposition}\label{hamred}
Let $M_g=\grS_g\times_\grr\bbc\bbp^1$ be a ruled surface arising from a projective unitary representation $\grr$ of $\pi_1(\grS_g)$. Then $M_g$ admits a holomorphic Hamiltonian circle action if and only if $\grr$ is reducible. Moreover, 
\begin{enumerate}
\item[(i)] $M_g$ is of type (1)(a) if and only if the representation $\grr$ is irreducible. 
\item[(ii)] $M_g$ is of type (2) if and only if $\grr$ is reducible. 
\end{enumerate}
\end{proposition}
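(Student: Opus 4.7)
The plan is to combine the Narasimhan–Seshadri theorem with an elementary centralizer computation in $PSU(2)$; most of the ingredients are already assembled in the preceding discussion, so the proposition is essentially a packaging of those facts. I will first dispatch parts (i) and (ii) via Narasimhan–Seshadri: for the fixed conformal structure $\tau\in\calm_g$, irreducible projective unitary representations of $\pi_1(\grS_g)$ correspond (up to line-bundle twist) to stable rank-two holomorphic vector bundles --- case (1)(a) --- while reducible representations correspond to polystable non-stable rank-two bundles, which must split as $\calo\oplus L$ with $\deg L=0$ --- case (2). The trivial representation is the special reducible case, yielding $\grS_g\times\bbc\bbp^1=\bbp(\calo\oplus\calo)$.

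For the ``if'' direction of the main equivalence I will assume $\grr$ is reducible and, after conjugation in $PSU(2)$, arrange that $\grr(\pi_1(\grS_g))\subset S^1$ for a maximal torus $S^1\subset PSU(2)$. This $S^1$ acts by rotation on the $\bbc\bbp^1$-factor of the universal cover $\tD\times\bbc\bbp^1$, commutes with the diagonal $\pi_1(\grS_g)$-action, and therefore descends to a holomorphic circle action on $M_g$. It preserves the product of a uniformising metric on $\tD$ and the Fubini--Study metric on $\bbc\bbp^1$, which is $\pi_1(\grS_g)$-invariant and hence descends to a K\"ahler metric on $M_g$. The Fubini--Study moment map is $S^1$-invariant, hence $\grr(\pi_1)$-invariant, and descends to a moment map on $M_g$, so the action is Hamiltonian.

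For the ``only if'' direction I will split by genus. When $g=1$, $\pi_1(T^2)\cong\bbz^2$ is abelian, so the image of every representation is abelian, lies in a maximal torus of $PSU(2)$, and is thus reducible; the implication is vacuous. When $g\geq 2$, any holomorphic vector field on $M_g$ projects to a holomorphic vector field on $\grS_g$, which vanishes since $\grS_g$ admits none. Hence any Hamiltonian holomorphic vector field on $M_g$ is vertical and, being Hamiltonian, Killing for the K\"ahler metric, hence fiberwise in $\mathfrak{psu}(2)$. Lifting to $\tD\times\bbc\bbp^1$, it corresponds to a holomorphic $\mathfrak{psl}(2,\bbc)$-valued function on the contractible space $\tD$ taking values in the real form $\mathfrak{psu}(2)$, and such a function must be a constant element $X\in\mathfrak{psu}(2)$ satisfying ${\rm Ad}(\grr(\gamma))X=X$ for all $\gamma\in\pi_1(\grS_g)$. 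A nonzero such $X$ exists iff the centralizer $Z_{PSU(2)}(\grr(\pi_1(\grS_g)))$ has positive dimension; since $PSU(2)$ has rank one, this forces $\grr(\pi_1(\grS_g))$ into a maximal torus, i.e.\ $\grr$ is reducible.

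The main (mild) obstacle is the verticality plus constancy-along-$\tD$ step in the $g\geq 2$ case of the ``only if'' direction; both points rely only on the absence of holomorphic vector fields on higher-genus Riemann surfaces and on the contractibility of $\tD$. Once that step is in place, the argument reduces to routine rank-one centralizer analysis together with the standard Narasimhan--Seshadri correspondence already cited.
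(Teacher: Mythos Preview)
Your overall strategy coincides with the paper's: the proposition is stated there merely as a summary (``Summarizing we have'') of the preceding paragraph, which invokes Narasimhan--Seshadri for (i) and (ii) and observes that the K\"ahler automorphisms of $D\times\bbc\bbp^1$ passing to the quotient $\grS_g\times_\grr\bbc\bbp^1$ are exactly the centralizer of $\grr(\pi_1(\grS_g))$ in $PSU(2)$, which contains a circle iff $\grr$ is reducible. Your write-up is more explicit than the paper's sketch, but there are two gaps in the ``only if'' direction.

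First, the $g=1$ case is \emph{not} vacuous. An abelian subgroup of $PSU(2)\cong SO(3)$ need not lie in a maximal torus: the Klein four-group (generated by two $\pi$-rotations about perpendicular axes) is abelian yet irreducible, since its preimage in $SU(2)$ is the quaternion group, which has no common eigenvector in $\bbc^2$. Such representations $\bbz^2\to PSU(2)$ exist and correspond, via Narasimhan--Seshadri, to the stable odd-degree rank-two bundle on $T^2$ --- precisely the case (1)(a) acknowledged in the paper's remark. The easy repair is to note that a \emph{Hamiltonian} $S^1$-action has fixed points, so its projection to $T^2$ (a holomorphic circle action, hence a translation) must be trivial; the generating vector field is therefore vertical for $g=1$ as well, and the rest of your argument applies.

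Second, your constancy step asserts that the lifted holomorphic map $\varphi\colon D\to\mathfrak{sl}(2,\bbc)$ takes values in $\mathfrak{su}(2)$ because the vector field is Killing. But ``Killing'' is only relative to \emph{some} $S^1$-invariant K\"ahler metric obtained by averaging, whose fiber restrictions need not be the fixed Fubini--Study metric; the relevant copy of $\mathfrak{su}(2)$ therefore varies from fiber to fiber, and the holomorphic-into-totally-real trick does not apply as stated. A clean fix: since $\grr$ is unitary, the ${\rm Ad}$-invariant Hermitian norm $|\varphi|^2$ is $\pi_1$-invariant and descends to the compact $\grS_g$; being the norm-square of a holomorphic map it is subharmonic, hence constant, and then $\partial\bar\partial|\varphi|^2=0$ forces $\varphi\equiv X$ constant. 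Now $0\neq X\in\mathfrak{sl}(2,\bbc)$ is ${\rm Ad}(\grr(\gamma))$-invariant for all $\gamma$, and Schur's lemma for an irreducible $\grr$ would give $X=0$; hence $\grr$ must be reducible.
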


\subsection{The Action of ${\rm Pic}^0(\grS_g)$ on Line Bundles of Degree $n$}\label{s2.3}
Consider the Picard group ${\rm Pic}(\grS_g)$ of holomorphic line bundles on $\grS_g$ together with its subgroup ${\rm Pic}^0(\grS_g)$ of holomorphic line bundles of degree zero. The group structure is given by tensor product, and we know that ${\rm Pic}^0(\grS_g)$ is a compact connected Abelian group of dimension $2g$ isomorphic to the Jacobian torus. Moreover, there is an exact sequence of Abelian groups
\begin{equation}\label{Picexact}
0\ra{1.8} {\rm Pic}^0(\grS_g)\ra{1.8} {\rm Pic}(\grS_g)\ra{1.8} \bbz\ra{1.8} 0.
\end{equation}
We denote by $\gL_n$ the set of holomorphic line bundles on $\grS_g$ of degree $n$. We can write ${\rm Pic}(\grS_g)$ as a direct sum ${\rm Pic}(\grS_g)=\bigoplus_n\gL_n$, and view it as a reducible ${\rm Pic}^0(\grS_g)$-module. The exact sequence (\ref{Picexact}) splits but there is no canonical splitting.
To understand the complex structures of case (3) of subsection \ref{complexsubsec} we note that the irreducible ${\rm Pic}^0(\grS_g)$-module $\gL_n$ is isomorphic as  ${\rm Pic}^0(\grS_g)$-modules to ${\rm Pic}^0(\grS_g)$ itself which in turn is identified with $T^{2g}$. So for $g\geq 2$ there is a $T^{2g}$'s worth of complex structures in $\gL_n$ with the complex structure $\grt$ on the Riemann surface $\grS_g$ fixed. Moreover, since an element of ${\rm Pic}^0(\grS_g)$ is determined by a reducible representation $\grr$, we can use $\grr$ to label an element of $\gL_n$. In the $g=1$ case the Jacobian variety is identified with $T^2$ itself, so all the complex structures coming from the Jacobian $T^2$ are equivalent \cite{Suw69}. 

\subsection{Extremal K\"ahler Metrics}\label{exKmet}

{\em Extremal K\"ahler metrics} are generalizations of constant
scalar curvature K\"ahler metrics:
Let $(M,J)$ be a compact complex manifold admitting at least one
K\"ahler metric. For a
particular K\"ahler class $\alpha$, let $\alpha^+$ denote the 
set of all K\"ahler forms in $\alpha$.

Calabi \cite{Cal82} suggested that one should look for extrema of the 
following functional $\Phi$ on $\alpha^+$:
\[
\Phi : \alpha^+ \rightarrow {\mathbb R}
\]
\[
\Phi(\omega) = \int_M s^2 d\mu,
\]
where $s$ is the scalar curvature and $d\mu$ is the volume form of the
K\"ahler metric corresponding to the K\"ahler form $\omega$.
Thus $\Phi$ is the square of the $L^2$-norm of the scalar curvature.

\bigskip

\begin{proposition}\cite{Cal82}
The K\"ahler form $\omega \in \alpha^+$ is an
extremal point of $\Phi$ if and only
if the gradient vector field $grad \, s$ is a holomorphic real vector field, that is
$\pounds_{grad \, s}J=0$. When this happens the metric $g$ corresponding to $\omega$ is
called an {\em extremal K\"ahler metric}.
\end{proposition}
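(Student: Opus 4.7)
The plan is to derive the Euler--Lagrange equation for the Calabi functional $\Phi$ and then recognize its critical points via the Lichnerowicz operator. First I would parametrize the K\"ahler forms in $\alpha^+$ by potentials: using the $\partial\bar\partial$-lemma, any $\omega' \in \alpha^+$ near $\omega$ has the form $\omega_\varphi = \omega + i\partial\bar\partial\varphi$ for a smooth real function $\varphi$, unique modulo a constant. A smooth path $\omega_t$ through $\omega_0 = \omega$ then produces a variation $f = \dot\varphi\big|_{t=0}$ which ranges over all smooth real functions on $M$ modulo the constants.

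Next I would differentiate $\Phi(\omega_t) = \int_M s_t^2\, d\mu_t$, obtaining
\[
\frac{d}{dt}\bigg|_{t=0}\Phi(\omega_t) \;=\; \int_M \bigl(2s\,\dot s + s^2 \dot\mu\bigr)\, d\mu,
\]
where $\dot\mu$ denotes the variation of the volume form relative to $d\mu$. Here one uses the standard identities $\dot\mu = \tfrac{1}{2}\Delta f$ together with the well-known fourth-order formula for $\dot s$ that mixes $\Delta^2 f$ with Ricci-coupled second derivatives of $f$. After integrating by parts (justified since $M$ is compact and has no boundary), these contributions should assemble into a single bilinear pairing
\[
\frac{d}{dt}\bigg|_{t=0}\Phi(\omega_t) \;=\; -4\int_M f \cdot \mathcal{L}(s)\, d\mu,
\]
where $\mathcal{L}$ is the Lichnerowicz operator $\mathcal{L} h := (\bar\partial\, grad^{1,0} h)^{\!*}(\bar\partial\, grad^{1,0} h)$. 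Since $f$ is otherwise arbitrary (and $\mathcal{L}(s)$ is automatically $L^2$-orthogonal to the constants, because $\mathcal{L}$ annihilates constants and is formally self-adjoint), vanishing of the first variation for all admissible $f$ is equivalent to $\mathcal{L}(s) = 0$.

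Finally I would translate $\mathcal{L}(s)=0$ into the claimed geometric condition. Pairing with $s$ and integrating yields $\int_M \bigl|\bar\partial\, grad^{1,0} s\bigr|^2\, d\mu = 0$, whence $grad^{1,0} s$ is a holomorphic vector field on $(M,J)$. Because the real Riemannian gradient $grad\, s$ is twice the real part of $grad^{1,0} s$, holomorphicity of the $(1,0)$-component is equivalent to $\pounds_{grad\, s} J = 0$. Combining these steps produces the stated characterization of extremal K\"ahler metrics.

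The main obstacle is the explicit calculation of $\dot s$: it is a genuine fourth-order expression in $f$ mixing $\Delta^2 f$ with curvature-coupled lower-order terms, and the subtle step is the repeated integration by parts that recombines these into the clean pairing $\langle f, \mathcal{L}s\rangle_{L^2}$, thereby unmasking the Lichnerowicz operator. Once this is in hand, the Bochner-type identity characterizing $\ker \mathcal{L}$ as the space of holomorphy potentials, and the equivalence between holomorphicity of the $(1,0)$-gradient and $\pounds_{grad\, s}J = 0$, are essentially formal.
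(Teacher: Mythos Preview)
The paper does not supply a proof of this proposition; it is simply quoted from Calabi \cite{Cal82} as background and left unproved. Your outline is essentially Calabi's original argument and is correct in its architecture: vary through K\"ahler potentials, compute the first variation, integrate by parts to expose the Lichnerowicz fourth-order operator, and then use the Bochner-type identity to identify its kernel with holomorphy potentials.

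One minor notational slip: as written, your definition $\mathcal{L}h := (\bar\partial\, grad^{1,0} h)^{*}(\bar\partial\, grad^{1,0} h)$ is a scalar, not an operator applied to $h$. What you intend is the fourth-order operator $\mathcal{L} = (\bar\partial\circ grad^{1,0})^{*}\circ(\bar\partial\circ grad^{1,0})$ acting on smooth functions, so that $\langle \mathcal{L}h, h\rangle_{L^2} = \int_M |\bar\partial\, grad^{1,0} h|^2\, d\mu$; with that correction the pairing argument and the conclusion $\mathcal{L}(s)=0 \Leftrightarrow \bar\partial\, grad^{1,0}s = 0$ go through exactly as you describe.
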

Notice that if $\pounds_{grad \, s}J=0$, then $Jgrad\, s$ is a Hamiltonian Killing vector field inducing Hamiltonian isometries.

\bigskip

Let $(M,J) = \bbp(E) \rightarrow \Sigma_g$ for $g\geq 1$ as in Section \ref{complexsubsec}.
As mentioned previously
if $E$ is indecomposable, it follows from Lemma 1 in \cite{ACGT11} that $(M,J)$ admits no hamiltonian vector fields. This means that any extremal K\"ahler metric  on $(M,J)$ must be CSC.
In case (1)(a) $(M,J)$ admits a CSC K\"ahler metric in every K\"ahler class, constructed as the local product on $\bbp (E) \rightarrow \Sigma_g$.  More specifically, $(M,J)$ is the quotient ruled surface
$\Sigma_g \times_\rho \bbc\bbp^1$, where $\rho: \pi_1(\Sigma_g) \rightarrow PSU(2)$ is a irreducible unitary representation parameterized by the smooth locus $\calr(\grS_g)^{irr}$ of the character variety. The local product metric is inherited from the product metric on $D \times \bbc\bbp^1$. 
In case (1)(b) $(M,J)$ admits no extremal K\"ahler metrics \cite{ApTo06}. 

For case (2) we know that $(M,J)$ admits a (local product) CSC K\"ahler metric in each K\"ahler class, while for case (3) we do not have any smooth CSC K\"ahler metrics but we do have non-CSC smooth extremal K\"ahler metrics in some of the K\"ahler classes as we shall explain in Section \ref{smoothconstruction}.

\subsection{Hamiltonian Circle Actions}\label{hamvf}

Consider the symplectic manifolds $(\grS_g\times S^2,\gro_{k_1,k_2})$ and $(\grS_g\tilde{\times} S^2,\gro_{k_1,k_2})$ as ruled surfaces whose symplectic forms represent the class $\gra_{k_1,k_2}$ of Equation (\ref{KahclassA}), that is $[\gro_{k_1,k_2}]=\gra_{k_1,k_2}$.  Hamiltonian circle actions on ruled surfaces have been treated in \cite{McD88,Aud90,AhHa91,Aud04} and are essentially the same as those of \cite{BoTo11}. The corresponding vector fields are not only Hamiltonian, but are also holomorphic. Thus, they leave the K\"ahler structure invariant. They are typically referred to as {\it Hamiltonian Killing vector fields}, and their corresponding group action by a {\it Hamiltonain Killing circle action}. However, we often shorten this nomenclature to {\it Hamiltonian circle action}. It follows from Lemma 1 of \cite{ACGT11} that a necessary condition for a ruled surface to admit a Hamiltonian circle action is that the $\bbp(E)$ must be decomposable of the form $\bbp(\calo\oplus L_{n})$ where $L_n$ is a complex line bundle of degree $n$. Thus, as discussed in Section \ref{complexsubsec} we obtain the pseudo-Hirzebruch surfaces $S_n$ which is diffeomorphic to $\grS_g\times S^2$ for $n=2m$ even and to $\grS_g\tilde{\times} S^2$ for $n=2m+1$ odd. Since $S_n \cong S_{-n}$, we may assume that $n \geq 0$. Then
$\alpha_{1,k}=[\gro_{1,k}]$ is a K\"ahler class as long as $m<k$. Writing the projective bundle $\pi:\bbp(\calo\oplus L_{n})\ra{1.6} \grS_g$ as  $(w,[u,v])$ where $[u,v]$ are homogeneous coordinates in the $\bbc\bbp^1$ fiber $\bbp(\calo\oplus L_{n}(w))$ at $w\in \grS_g$, the circle action on $\bbp(\calo\oplus L_{n})$ is defined by $\tilde{\cala}_{n}(\grl):\bbp(\calo\oplus L_{n})\ra{1.6} \bbp(\calo\oplus L_{n})$ by $\tilde{\cala}_{n}(\grl)(w,[u,v])=(w,[u,\grl v])$ where $\grl\in \bbc$ with $|\grl|=1$ is holomorphic. These circle actions have two fixed point sets on $\bbp(\calo\oplus L_{n})$, namely, the divisors (zero section) $E_{n}:=\bbp({\mathcal O} \oplus 0)$ and (infinity section)
$E'_{n}:=\bbp(0 \oplus L_{n})$.

\subsection{Conjugacy Classes of Maximal Tori}
In this section we apply the work of Bu{\c{s}}e \cite{Bus10} on equivariant Gromov-Witten invariants to show that our Hamiltonian circle actions are non-conjugate in the group of Hamiltonian isotopies of the symplectic manifold $(\grS_g\times S^2,\gro_{1,k})$. 
Since $\grS_g\times S^2$ is not toric, the circle action $\tilde{\cala}_{2m}(\grl)$ corresponds to a maximal torus in the group $\gH\ga\gm(\grS_g\times S^2,\gro_{1,k})$ of Hamiltonian isotopies, and since we are interested in K\"ahlerian structurs we consider only $m<k$, that is, for each $m=0,\cdots,(k-1)$ we consider the circle subgroups $\tilde{\cala}_{2m}(\grl)\subset \gH\ga\gm(\grS_g\times S^2,\gro_{1,k})$. We shall prove

\begin{theorem}\label{conjmaxtori}
There are exactly $k$ conjugacy classes of maximal tori in $\gH\ga\gm(\grS_g\times S^2,\gro_{1,k})$ represented by the $k$ circle subgroups $\tilde{\cala}_{2m}(\grl)$.
\end{theorem}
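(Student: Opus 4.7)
The plan is to prove the theorem in two parts: first establish that at most $k$ conjugacy classes of maximal tori can occur, then show that the $k$ candidates $\tilde{\cala}_{2m}(\grl)$, $m=0,\ldots,k-1$, are pairwise non-conjugate.

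For the upper bound, I would rely on the classification of Hamiltonian $S^1$-actions on symplectic ruled surfaces (McDuff, Audin, Ahara--Hattori). Since $\grS_g\times S^2$ with $g\geq 1$ is not toric, every maximal torus in $\gH\ga\gm(\grS_g\times S^2,\gro_{1,k})$ is a circle. By Lemma 1 of \cite{ACGT11} recalled in Section \ref{hamvf}, any complex structure compatible with such a Hamiltonian circle is of the form $\bbp(\calo\oplus L_n)$, i.e. a pseudo-Hirzebruch surface $S_n$. Smoothing an $\gro_{1,k}$-compatible almost complex structure to an integrable one preserved by the circle forces $n=2m$ even (for the correct diffeomorphism type) and the K\"ahler condition of Lemma \ref{Kahclass} forces $0\leq m<k$ (using $S_{-n}\cong S_n$ to take $m\geq 0$). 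This produces at most the $k$ listed circles, up to Hamiltonian isotopy.

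For the lower bound, I would apply Bu{\c{s}}e's result \cite{Bus10} that equivariant Gromov--Witten invariants are invariants of the conjugacy class of a circle subgroup of the Hamiltonian group. For each $m$, the action $\tilde{\cala}_{2m}(\grl)$ on $S_{2m}$ has exactly two fixed components: the zero section $E_{2m}=\bbp(\calo\oplus 0)$ and the infinity section $E'_{2m}=\bbp(0\oplus L_{2m})$, both diffeomorphic to $\grS_g$ and with self-intersections $+2m$ and $-2m$ respectively. Their homology classes satisfy $[E_{2m}]=[\grS_g\times\{\text{pt}\}]+m[F]$ in $H_2(\grS_g\times S^2,\bbz)$, so they distinguish $m$. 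Following the argument of Bu{\c{s}}e (and in the spirit of Kedra--Pinsonnault), I would compute a one-point equivariant genus-zero Gromov--Witten invariant in the fiber class $[F]$ with the point constraint supported on a fixed component; for the complex structure $S_{2m}$ the unique $J$-holomorphic curve in class $[F]$ through a generic point of $E_{2m}$ is the fiber, and its $\tilde{\cala}_{2m}$-equivariant contribution is a rational function in the equivariant parameter whose pole structure is determined by the weights $\pm 1$ at the two fixed sections together with the normal-bundle degrees $\pm 2m$. The resulting invariant depends nontrivially on $m$, so the circles $\tilde{\cala}_{2m}(\grl)$ with distinct $m\in\{0,\ldots,k-1\}$ lie in distinct conjugacy classes of $\gH\ga\gm(\grS_g\times S^2,\gro_{1,k})$.

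The main obstacle I expect is the verification that the equivariant Gromov--Witten invariant really separates the values of $m$; one must check that the contributions from $E_{2m}$ and $E'_{2m}$ do not cancel and that the invariant is well-defined (nonempty, regular moduli space for generic $S^1$-invariant almost complex structures) in this genus $g\geq 1$ setting. This is handled because the fiber class $F$ has minimal $\gro_{1,k}$-area and is represented by embedded spheres with positive normal bundle in the fiber direction, so the relevant moduli space is regular and the equivariant localization formula applies directly, yielding an explicit rational expression that detects $m$. Once that computation is in hand, non-conjugacy follows from Bu{\c{s}}e's invariance theorem, completing the proof.
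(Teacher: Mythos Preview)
Your upper-bound argument is essentially what the paper does (by deferring to \cite{BoTo11}), and that part is fine.

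The gap is in your non-conjugacy argument. You propose to distinguish the circles $\tilde{\cala}_{2m}$ using a \emph{genus-zero} equivariant Gromov--Witten invariant in the \emph{fiber class} $[F]$. But the $S^1$-action on each fiber $\bbp^1$ is the standard rotation $[u:v]\mapsto[u:\grl v]$ for every $m$; the integer $m$ only records how the fibers are twisted together via $L_{2m}$, and this twisting does not enter the local equivariant data of a single fiber. The normal bundle of a fiber is trivial (degree~$0$), so neither the moduli space of fiber curves nor its equivariant obstruction theory sees $m$. The ``normal-bundle degrees $\pm 2m$'' you invoke belong to the \emph{sections} $E_{2m}$, $E'_{2m}$, not to the fibers, and they do not appear in a fiber-class count. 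Thus a genus-zero fiber-class invariant will not separate the actions.

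What the paper (following Bu{\c{s}}e) actually uses is the equivariant Gromov--Witten invariant with \emph{no marked points} in the \emph{section class} $A-mF$, where $A=[\grS_g\times\{\text{pt}\}]$. For the complex structure $J_m$ this class is represented by the section of self-intersection $-2m$, a genus-$g$ curve, and the parametric invariant is supported at a single level of the Borel filtration: $EGW(N^g_k,A-mF)=\pm u^{2m+g-1}$. The exponent $2m+g-1$ encodes $m$ directly and is a conjugacy invariant. If you want to repair your approach, you should switch from fiber classes to section classes and from genus zero to genus $g$; that is precisely where the dependence on $m$ lives.
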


\begin{proof}
The proof of this theorem uses equivariant Gromov-Witten (EGW) invariants as described by Bu{\c{s}}e \cite{Bus10} which in turn follows \cite{Giv96,LiTi98,Rua99,LeOn08}. We give only a very brief sketch here and refer to these references for details. The point is that these invariants only depend on the conjugacy class of the circle subgroup. The EGW invariants are obtained as a limit of the so-called parametric Gromov-Witten invariants. Given a symplectic manifold $(N,\gro)$ with a Hamiltonian circle action $\cala_{2m}$, consider the Borel construction $N_{\cala_{2m}}=N\times_{\cala_{2m}}ES^1$ where $S^1\ra{1.6}ES^1\ra{1.6}BS^1$ is the usual universal $S^1$-bundle. Now both $ES^1$ and $BS^1$ are direct limits 
$$ES^1=S^\infty=\lim_{r\rightarrow\infty}S^{2r+1}, \qquad BS^1=\bbc\bbp^\infty= \lim_{r\rightarrow\infty}\bbc\bbp^r,$$ respectively. Thus, the fibration $N\ra{1.5} N_{\cala_{2m}}\ra{1.5} \bbc\bbp^\infty$ is the limit of fibrations $N\ra{1.5} N_{\cala_{2m}}^r\ra{1.5} \bbc\bbp^r$ where $N^r_{\cala_{2m}}=N\times_{\cala_{2m}}S^{2r+1}$. Now in our case the fibers of each bundle $N_{\cala_m}^r$ comes equipped with an induced symplectic form $\gro_{1,k}$ together with a compatible complex structure $J_m$ such that $\cala_m$ is a holomorphic circle action with respect to $J_m$. Moreover, $\gro_{1,k}$ and $J_m$ can be extended to a closed 2-form and endomorphism field on each $N^r_{\cala_m}$. 
Of course, in our case $N=\grS_g\times S^2$ with its symplectic form $\gro_{1,k}$, so we denote this by $N^g_k$ and the total space of the bundles described above by $N^{g,r}_{k,\cala_{2m}}$. Letting $A,F$ denote the homology classes $\grS_g\times \{pt\}, \{pt\}\times S^2$, respectively Bu{\c{s}}e shows that the equivariant Gromov-Witten invariant (with no marked points) 
$$EGW(N^g_k,A-mF):H^*(\overline{\calm}_{g,0},\bbq)\ra{1.8} H^*(BS^1,\bbq)$$
can be written in terms of the ``parametric'' Gromov-Witten invariants on each of pieces $N^{g,r}_{k,\cala_{2m}}$ as
$$EGW(N^g_k,A-mF)=\bigoplus_{r=1}^\infty EGW(N^{g,r}_{k,\cala_{2m}},A-mF)(\grb)u^r.$$
Here $\overline{\calm}_{g,0}$ denotes the Deligne-Mumford compactification of the moduli space of genus $g$ curves with no marked points, and we can take $u$ to be a generator of $H^*(BS^1,\bbz)\subset H^*(BS^1,\bbq)$.
Moreover, Bu{\c{s}}e shows that $EGW(N^{g,r}_{k,\cala_{2m}},A-mF)(\grb)=\pm 1$ if $r=2m+g-1$ and zero otherwise.  Hence, one obtains
\begin{equation}\label{EGW}
EGW(N^g_k,A-mF)=\pm u^{2m+g-1}.
\end{equation}
These invariants are invariant under symplectomorphisms, in fact, they are invariant under deformations of the symplectic form \cite{Rua99}. It follows that $EGW(N^g_k,A-mF)$ depends only on the conjugacy class of the circle action $\cala_{2m}$, and that the action $\cala_{2m'}$ is conjugate to $\cala_{2m}$ under the group $\gH\ga\gm(N^g_k)$ if and only if $m=m'$. 
To show that there are no other conjugacy classes we refer to the last paragraph of the proof of Theorem 6.2 in \cite{BoTo11}. Theorem \ref{conjmaxtori} is now proved.
\end{proof}

\begin{remark}\label{EGrem}
The proof of Theorem \ref{conjmaxtori} given here works for all $g$ including $g=1$, but is more involved than the proof given in \cite{BoTo11}. However, the latter proof which uses an equation in rational homotopy given in \cite{Bus10} doesn't work for the full range $m$ when $g$ is large. See Lemma 4.3 in \cite{Bus10}.  
\end{remark}

\section{Sasakian Geometry on $S^3$-bundles over $\grS_g$}\label{s4}

As the case for $S^2$-bundles over Riemann surfaces, there are exactly two $S^3$-bundles over Riemann surfaces and they are distinguished by their second Stiefel-Whitney class $w_2$. In \cite{BoTo11} we proved this using a very recent result of Kreck and L\"uck \cite{KrLu09}. However, it has been pointed out to us by several people that this is ``well known''. Nevertheless, we could not find a clear statement or proof of this fact in the literature. We did notice that one can adapt the second proof of Lemma 6.9 of \cite{McDSa} to the case of $S^3$-bundles by using Hatcher's proof \cite{Hat83} of the Smale conjecture that ${\rm Diff}(S^3)$ deformation retracts onto $O(4)$ to prove:
\begin{proposition}\label{s3sigma}
Let $\grS_g$ be a Riemann surface of genus $g$. There are precisely two oriented $S^3$-bundles over $\grS_g$, the trivial bundle $\grS_g\times S^3$ with $w_2=0$, and the non-trivial bundle, denoted $\grS_g\tilde{\times} S^3$, with $w_2\neq 0$.
\end{proposition}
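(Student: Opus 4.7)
The strategy is to replace the diffeomorphism group by a compact Lie group via Hatcher's theorem and then run obstruction theory on the 2-complex $\grS_g$, parallel to the second proof of Lemma 6.9 in \cite{McDSa}. Hatcher's resolution of the Smale conjecture \cite{Hat83} asserts that the inclusion $O(4)\hookrightarrow \mathrm{Diff}(S^3)$ is a homotopy equivalence; restricting to orientation-preserving components gives $SO(4)\simeq \mathrm{Diff}^+(S^3)$, hence $B\mathrm{Diff}^+(S^3)\simeq BSO(4)$. Consequently, isomorphism classes of oriented $S^3$-bundles over $\grS_g$ are in bijection with $[\grS_g,BSO(4)]$.

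The next step is the computation of $[\grS_g,BSO(4)]$. Because $SO(4)$ is connected with $\pi_1(SO(4))=\bbz/2$ and $\pi_2(SO(4))=0$, the classifying space satisfies $\pi_1(BSO(4))=0$ and $\pi_2(BSO(4))=\bbz/2$, so its second Postnikov section is $P_2BSO(4)\simeq K(\bbz/2,2)$. The fiber of $BSO(4)\to P_2BSO(4)$ is 2-connected, so for the 2-dimensional CW complex $\grS_g$ we obtain
\[
[\grS_g,BSO(4)] \;=\; [\grS_g,K(\bbz/2,2)] \;=\; H^2(\grS_g;\bbz/2) \;=\; \bbz/2,
\]
and the invariant distinguishing the two classes is precisely the second Stiefel-Whitney class $w_2$ (the characteristic class pulled back from the fundamental class of $K(\bbz/2,2)$).

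Finally, the trivial bundle $\grS_g\times S^3$ represents the class with $w_2=0$, and the other element of $[\grS_g,BSO(4)]$ is realized by an oriented $S^3$-bundle $\grS_g\tilde{\times}S^3$ with $w_2\neq 0$ (concretely, the unit sphere bundle of any oriented rank-four real vector bundle whose $w_2$ is the generator of $H^2(\grS_g;\bbz/2)$, for instance the underlying real bundle of $L\oplus \calo$ for a complex line bundle $L\to\grS_g$ of odd degree). The main obstacle is really just invoking Hatcher's theorem and justifying the Postnikov truncation; once these are granted the proof reduces to a routine obstruction-theoretic calculation that mirrors the treatment of $S^2$-bundles in \cite{McDSa}.
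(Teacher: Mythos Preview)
Your proposal is correct and follows precisely the route the paper indicates: invoke Hatcher's theorem to replace $\mathrm{Diff}^+(S^3)$ by $SO(4)$, then run the obstruction-theoretic argument from the second proof of Lemma~6.9 in \cite{McDSa} on the $2$-complex $\grS_g$. The paper gives only this outline and no further details, so your Postnikov computation of $[\grS_g,BSO(4)]\cong H^2(\grS_g;\bbz/2)$ is exactly the intended fleshing-out; the only small point you might add is that the $w_2$ of the associated rank-$4$ bundle coincides with $w_2$ of the total space (since $w_2(T\grS_g)=0$), matching the paper's later use of $w_2(M^5_{g,l,\bfw})$.
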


It is also well known that there are no exotic differential structures in dimension five, that is, any smooth 5-manifolds that are homeomorphic are diffeomorphic. So there are precisely two diffeomorphism types of $S^3$-bundles over $\grS_g$.

\subsection{Circle Bundles over Riemann Surfaces}
The Sasakian geometry of circle bundles over Riemann surfaces has been studied by Geiges \cite{Gei97} and Belgun \cite{Bel01} (see also Chapter 10 of \cite{BG05}). Indeed, when the genus $g\geq 1$ each deformation class has a constant scalar curvature Sasakian metric. (Actually it has constant $\Phi$-sectional curvature). Furthermore, up to a finite cover the Sasakian structure is regular. Alternatively, an orbifold structure on the base is developable (cf. \cite{BG05} page 107). Thus, we let
$M^3_g$ denote the total space of an $S^1$ bundle over a Riemann surface $\grS_g$ of genus $g\geq 1$ and for simplicity we assume that this bundle arises from a generator in $H^2(\grS_g,\bbz)$. 

There are many inequivalent Sasakian structures on $M^3_g$ with constant scalar curvature. These correspond to the inequivalent K\"ahler structures on the base $\grS_g$ arising from the moduli space $\calm_g$ of complex structures on $\grS_g$. When writing $M^3_g$ we often assume that a transverse complex structure has been chosen without specifying which one. Thus, we write the Sasakian structure with constant scalar curvature on $M^3_g$ as $\cals_1=(\xi_1,\eta_1,\Phi_1,g_1)$ and call it the {\it standard Sasakian structure}. However, when we do wish to specify the complex structure on $\grS_g$ we shall denote it by $\grt\in \gM_g$ and denote the induced endomorphism on the circle bundle by $\Phi_\grt$.

We denote the fundamental group of $M^3_g$ by $\grG_3(g)$. Then from the long exact homotopy sequence of the bundle $S^1\ra{1.5}M^3_g\ra{1.5}\grS_g$ and the fact that $\pi_2(\grS_g)=0$ we have 
\begin{equation}\label{m3hyper}
0\ra{1.8}\bbz\ra{1.8}\grG_3(g)\ra{1.8} \grG_2(g)\ra{1.8} 1
\end{equation}
where $\grG_2(g)$ is the fundamental group of $\grS_g$. So $\grG_3(g)$ is an extension of $\grG_2(g)$ by  $\bbz$. Furthermore, it does not split \cite{Sco83}.

\subsection{The Join Construction}\label{joinsec}
We use the join construction of \cite{BGO06} to describe the diffeomorphism type of our 5-manifolds. 
We describe the join of $M^3_g$ with the weighted 3-sphere $S^3_\bfw$. As mentioned in \cite{BoTo11} this construction involves strict contact structures, that is it only really depends on the contact 1-forms and not on the transverse complex structures which we are free to choose. Recall the weighted sphere as presented in Example 7.1.12 of \cite{BG05}. Let $\eta_0$ denote the standard contact form on $S^3$. It is the restriction to $S^3$ of 1-form $\sum_{i=1}^2(y_idx_i-x_idy_i)$ in $\bbr^4$. Let $\bfw=(w_1,w_2)$ be a weight vector with $w_i\in\bbz^+$. Then the weighted contact form is defined by
\begin{equation}\label{wcon1}
\eta_\bfw =\frac{\eta_0}{\eta_0(\xi_\bfw)}
\end{equation}
with Reeb vector field $\xi_\bfw=\sum_{i=1}^2w_iH_i$ where $H_i$ is the vector field on $S^3$ induced by $y_i\partial_{x_i}-x_i\partial_{y_i}$ on $\bbr^4$.

We denote this weighted sphere by $S^3_{\bfw}$ and consider the manifold $M^3_g\times S^3_{\bfw}$ with contact forms $\eta_1,\eta_\bfw$ on each factor, respectively. There is a 3-dimensional torus $T^3$ acting on $M^3_g\times S^3_{\bfw}$ generated by the Lie algebra $\gt_3$ of vector fields $\xi_1,H_1,H_2$ that leaves both 1-forms $\eta_1,\eta_\bfw$ invariant. Now the join construction \cite{BGO06,BG05} provides us with a new contact manifold by quotienting $M^3_g\times S^3_{\bfw}$ with an appropriate circle subgroup of $T^3$. Let $(x,u)\in M^3_g$ with $x\in \grS_g$ and $u$ in the fiber, and $(z_1,z_2)\in \bbc^2$ with $|z_1|^2+|z_2|^2=1$ so it represents a point on $S^3$. Consider the circle action on $M^3_g\times S^3_{\bfw}$ given by 
\begin{equation}\label{joinact}
(x,u;z_1,z_2)\mapsto (x,e^{il_2\theta}u; e^{-iw_1\theta}z_1,e^{-iw_2\theta}z_2)
\end{equation}
where the action $u\mapsto e^{il_2\theta}u$ is that generated by $l_2\xi_1$.  We also assume, without loss of generality, that $\gcd(l_2,w_1,w_2)=1$. The action (\ref{joinact}) is generated by the vector field $l_2\xi_1-\xi_\bfw$. It has period $1/l_2$ on the $M^3_g$ part, and if $l_1=\gcd(w_1,w_2)$ it will have period $-1/l_1$ on the $S^3_\bfw$ part.  With this in mind, when considering quotients we shall always take the pair $(w_1,w_2)$ to be relatively prime positive integers in which case the infinitesimal generator of the action is given by the vector field $l_2\xi_1-l_1\xi_\bfw$. In order to construct the appropriate contact structure with 1-form  $l_1\eta_1+l_2\eta_\bfw$, we renormalize the vector field and consider
\begin{equation}\label{s1action}
L_\bfw=\frac{1}{2l_1}\xi_1-\frac{1}{2l_2}\xi_\bfw= \frac{1}{2l_1}\xi_1-\frac{1}{2l_2}(w_1H_1+w_2H_2).
\end{equation}
This generates a free circle action on $M^3_g\times S^3_{\bfw}$ which we denote by $S^1(l_1,l_2,\bfw)$. 
\begin{definition}\label{join}
The quotient space of $M^3_g\times S^3_{\bfw}$ by the action $S^1(l_1,l_2,\bfw)$ is called the $(l_1,l_2)$-join of $M^3_g$ and $S^3_\bfw$, and is denoted by $M^3_g\star_{l_1,l_2}S^3_{\bfw}$.
\end{definition}

$M^3_g\star_{l_1,l_2}S^3_{\bfw}$ will be a smooth manifold if $\gcd(l_2,\upsilon_2l_1)=1$ where $\upsilon_2=w_1w_2$. 
Moreover, since the 1-form $l_1\eta_1+l_2\eta_\bfw$ on $M^3_g\times S^3_{\bfw}$ is invariant under $T^3$, we get a contact form on $M^3_g\star_{l_1,l_2}S^3_{\bfw}$, denoted by $\eta_{l_1,l_2,\bfw}$, which is invariant under the factor group $T^2(l_1,l_2,\bfw)= T^3/S^1(l_1,l_2,\bfw)$. The corresponding contact structure is $\cald_{l_1,l_2,\bfw}=\ker\eta_{l_1,l_2,\bfw}$, and the Reeb vector field $R_{l_1,l_2,\bfw}$ of $\eta_{l_1,l_2,\bfw}$ is the restriction to $M^3_g\star_{l_1,l_2}S^3_{\bfw}$ of the vector field
\begin{equation}\label{Reebw}
\tilde{R}_{l_1,l_2,\bfw}=\frac{1}{2l_1}\xi_1+\frac{1}{2l_2}\xi_\bfw
\end{equation}
on $M^3_g\times S^3_{\bfw}$. When working with $R_{l_1,l_2,\bfw}$ we often view this as $\tilde{R}_{l_1,l_2,\bfw}$ modulo the ideal $\cali_L$ generated by $L_\bfw$ in which case we have
\begin{equation}\label{Reebw2}
\tilde{R}_{l_1,l_2,\bfw}=\frac{1}{l_2}(w_1H_1+w_2H_2) \mod \cali_L
\end{equation}
and we identify $R_{l_1,l_2,\bfw}$ with the right hand side.

The quotient of $M^3_g\times S^3_{\bfw}$ by the 2-torus generated by $L_\bfw$ and $R_\bfw$ splits giving the complex orbifold $\grS_g\times \bbc\bbp(\bfw)$ with the product complex structure and symplectic form $\gro=l_1\gro_g+l_2\gro_\bfw$ where $\gro_g,\gro_\bfw$ are the standard symplectic form on $\grS_g$ and $\bbc\bbp(\bfw)$, respectively (see footnote below). Then $M^3_g\star_{l_1,l_2}S^3_{\bfw}$ is the total space of the $S^1$ orbibundle $\pi:M^3_g\star_{l_1,l_2}S^3_{\bfw}\ra{1.6} \grS_g\times \bbc\bbp(\bfw)$ which satisfies $\pi^*\gro =d\eta_{l_1,l_2,\bfw}$. This is the orbifold Boothby-Wang construction, and as shown in \cite{BG00a} the orbifold $M^3_g\star_{l_1,l_2}S^3_{\bfw}$ not only inherits a quasi-regular contact structure, but also a natural Sasakian structure $\cals_\bfw=(\xi_\bfw,\eta_{l,\bfw},\Phi_\bfw,g_\bfw)$ from the product K\"ahler structure on the base. In particular, the underlying CR structure which is inherited from the product complex structure on $\grS_g\times \bbc\bbp(\bfw)$ is $(\cald_{l,\bfw},J_\bfw)$ where $J_\bfw=\Phi_\bfw |_{\cald_{l,\bfw}}$. 

As will be indicated below it is quite difficult to determine the exact diffeomorphism type when $l_2>1$. Indeed, in \cite{BoTo11} it was shown that in the genus one case $M^3_1\star_{l_1,l_2}S^3_{\bfw}$ has non-Abelian fundamental group when $l_2>1$ and is a non-trivial lens space bundle over $T^2$. Moreover, its homotopy type appears also to depend on $l_1$. For this reason we focus our attention here on the case $l_2=1$ where the diffeomorphism type can be determined, and in this case $M^3_g\star_{l_1,l_2}S^3_{\bfw}$ is a smooth 5-manifold. Generally, it follows from Proposition 7.6.7 of \cite{BG05} that $M^3_g\star_{l_1,l_2}S^3_{\bfw}$ is a lens space bundle over $\grS_g$. In particular, for $l_2=1$ we have an $S^3$-bundle over $\grS_g$. For ease of notation we define $M^5_{g,l,\bfw}=M^3_g\star_{l,1}S^3_{\bfw}$ with the contact structure $\cald_{l,\bfw}$ and contact form $\eta_{l,\bfw}=l\eta_1+\eta_\bfw$ with Reeb vector field $R_{l,\bfw}=R_{l_1,1,\bfw}$. Note that  $R_{l,\bfw}=L_\bfw+\xi_\bfw$. So mod the ideal generated by $L_\bfw$, $R_{l,\bfw}$ equals $\xi_\bfw=w_1H_1+w_2H_2$ which is independent of $l$. So the Reeb vector field on  $M^5_{g,l,\bfw}$ is simply $\xi_\bfw$, and  the quotient by its circle action is the product K\"ahler orbifold $\grS_g\times \bbc\bbp(\bfw)$ with K\"ahler form $\gro_l=l\gro_g+\gro_\bfw$ where $\gro_g$ and $\gro_\bfw$ are the standard K\"ahler forms\footnote{By the {\it standard K\"ahler form} on $\bbc\bbp(\bfw)$ we mean the Bochner-flat extremal K\"ahler form described, for example, in \cite{Gau09}.} on $\grS_g$ and $\bbc\bbp(\bfw)$, respectively.

\begin{remark}\label{l2}
At this stage what we know about the Sasakian orbifolds $M^3_g\star_{l_1,l_2}S^3_{\bfw}$ when $l_2>1$ is that they are smooth manifolds if $\gcd(l_2,w_1w_2)=1$ whose fundamental group is a $\bbz_{l_2}$ extension of $\grG_2(g)$. We shall say little more about them in this paper.
\end{remark}

\subsection{The Contact Manifolds $M^5_{g,l,\bfw}$}
Here by convention by diffeomorphism (homeomorphism) type we mean oriented diffeomorphism (homeomorphism) type. By Proposition 7.6.7 of \cite{BG05} we know that $M^5_{g,l,\bfw}$ is an $S^3$-bundle over $\grS_g$. So by Proposition \ref{s3sigma} there are precisely two which are determined by the second Stiefel-Whitney class $w_2(M^5_{g,l,\bfw})\in H^2(M^5_{g,l,\bfw},\bbz_2)$. Furthermore, from the homotopy exact sequence of the $S^3$-bundle we obtain
\begin{equation}\label{joinhomotopy}
\pi_1(M^5_{g,l,\bfw})\approx \pi_1(\grS_g)\approx \grG_2(g),\qquad \pi_2(M^5_{g,l,\bfw}) =0.
\end{equation}

Now $w_2(M^5_{g,l,\bfw})$ is the mod $2$ reduction of the first Chern class of the contact bundle $\cald$. So we begin by  determining $c_1(\cald_{l,\bfw})$. 

\begin{lemma}\label{c1}
Let $\cald_{l,\bfw}$ be the contact structure on $M^5_{g,l,\bfw}$. Then
\begin{equation}\label{c1D}
c_1(\cald_{l,\bfw})=(2-2g-l|\bfw|)\grg
\end{equation}
where $\grg\in H^2(M^5_{g,l,\bfw},\bbz)\approx \bbz$ is a generator and $|\bfw|=w_1+w_2$. Thus, $w_2(M^5_{g,l,\bfw})\equiv l|\bfw|\mod 2$.
\end{lemma}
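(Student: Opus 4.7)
The plan is to use the orbifold Boothby--Wang fibration
\[
\pi \colon M^5_{g,l,\bfw} \longrightarrow Z := \grS_g \times \bbc\bbp(\bfw)
\]
introduced in Section \ref{joinsec}, under which $d\eta_{l,\bfw}=\pi^*\gro_l$ with $\gro_l=l\gro_g+\gro_\bfw$, and under which the contact subbundle $\cald_{l,\bfw}$ is identified with $\pi^*TZ$ as a complex vector bundle. The first step is to compute
\[
c_1(\cald_{l,\bfw}) \;=\; \pi^* c_1^{\mathrm{orb}}(TZ) \;=\; (2-2g)\,\pi^*\grb \;+\; |\bfw|\,\pi^*\gra,
\]
where $\grb=[\gro_g/2\pi]\in H^2(\grS_g,\bbz)$ and $\gra=[\gro_\bfw/2\pi]\in H^2_{\mathrm{orb}}(\bbc\bbp(\bfw),\bbz)$ are the positive generators. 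This uses $c_1(T\grS_g)=(2-2g)\grb$ (Gauss--Bonnet) and the Euler sequence on $\bbc\bbp(\bfw)$, which gives $c_1^{\mathrm{orb}}(T\bbc\bbp(\bfw))=(w_1+w_2)\gra$.

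The second step extracts an Euler--class relation from the identity $\pi^*\gro_l=d\eta_{l,\bfw}$: since the right-hand side is exact,
\[
\pi^*(l\grb+\gra)\;=\;0 \qquad\text{in } H^2(M^5_{g,l,\bfw},\bbz),
\]
so $\pi^*\gra=-l\,\pi^*\grb$. This is precisely the Gysin relation for the $S^1$-orbibundle $\pi$, whose orbifold Euler class is $l\grb+\gra$; orbifold classes on $Z$ pull back to genuine integral classes on the smooth manifold $M^5_{g,l,\bfw}$. Substituting into the formula of the first step yields
\[
c_1(\cald_{l,\bfw})\;=\;(2-2g-l|\bfw|)\,\pi^*\grb.
\]

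The last step is to identify $\pi^*\grb$ as a generator of $H^2(M^5_{g,l,\bfw},\bbz)\cong\bbz$. The map $\pi$ factors through the smooth $S^3$-bundle $p\colon M^5_{g,l,\bfw}\to\grS_g$ of Proposition \ref{s3sigma}, so $\pi^*\grb=p^*\grb$. Since the fibre $S^3$ has $H^1(S^3)=H^2(S^3)=0$, the Leray--Serre spectral sequence of $p$ collapses in total degree two, yielding $p^*\colon H^2(\grS_g,\bbz)\xrightarrow{\;\cong\;}H^2(M^5_{g,l,\bfw},\bbz)$. Hence $\pi^*\grb=\grg$ is a generator, and the formula $c_1(\cald_{l,\bfw})=(2-2g-l|\bfw|)\grg$ follows. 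The mod-$2$ reduction then gives $w_2(M^5_{g,l,\bfw})\equiv l|\bfw|\pmod 2$, since $2-2g$ is even.

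The main step to check carefully is the second: that the orbifold Euler class of $\pi$ equals $l\grb+\gra$, and that its pullback vanishes in honest integral cohomology of the smooth manifold $M^5_{g,l,\bfw}$. This is standard orbifold Boothby--Wang bookkeeping (cf.\ Chapter 7 of \cite{BG05}); equivalently, one can read the Chern class of $\pi$ off the underlying principal $T^2$-orbibundle $M^3_g\times S^3_\bfw\to Z$, whose two Chern factors are $\grb$ and $\gra$, by projecting along the subcircle $S^1(l,1,\bfw)$ generated by $L_\bfw=\tfrac{1}{2l}\xi_1-\tfrac{1}{2}\xi_\bfw$.
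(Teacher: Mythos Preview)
Your proof is correct and follows essentially the same strategy as the paper: compute $c_1^{\mathrm{orb}}$ of the base $\grS_g\times\bbc\bbp(\bfw)$, pull back via the orbifold Boothby--Wang map, and use the relation $\pi^*[\gro_l]=0$ to eliminate one generator in favour of the other. Two minor remarks: your $\gra,\grb$ are swapped relative to the paper's convention, and your Leray--Serre argument for why $\pi^*\grb$ generates $H^2(M^5_{g,l,\bfw},\bbz)$ is actually more explicit than what the paper supplies (the paper simply asserts the pullback statement from the definition of the $(l,1)$-join).
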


\begin{proof}
The orbifold canonical divisor of $\grS_g\times \bbc\bbp(\bfw)$ is 
\begin{eqnarray}\label{Korbw0} \notag
K^{orb} &=&K_{\grS_g\times \bbc\bbp^1}+(1-\frac{1}{w_1})E_0+(1-\frac{1}{w_2})E_0 \\ \notag
             &=&-(2-2g)F-2E_0+(1-\frac{1}{w_1})E_0-(1+\frac{1}{w_2})E_0 \\
             &=&-(2-2g)F-\frac{|\bfw|}{w_1w_2}E_0.
\end{eqnarray}

While the orbifold first Chern class $c_1^{orb}$ of $-K^{orb}$ is a rational class in $H^2(\grS_g\times \bbc\bbp(\bfw),\bbq)$, it defines an integral class in the orbifold cohomology $H^2_{orb}(\grS_g\times \bbc\bbp(\bfw),\bbz)$ defined as the cohomology of the classifying space of the orbifold (see Section 4.3 of \cite{BG05}). Namely,
the orbifold first Chern class $c_1^{orb}$ of $\grS_g\times \bbc\bbp(\bfw)$ satisfies 
\begin{equation}\label{c1orb}
p^*c_1^{orb}(\grS_g\times \bbc\bbp(\bfw))=2(1-g)\gra+|\bfw|\grb
\end{equation}
where $p$ is the classifying map of the orbifold $\grS_g\times \bbc\bbp(\bfw)$  and $\gra,\grb$ are the classes in $H^2_{orb}(\grS_g\times \bbc\bbp(\bfw),\bbz)$ representing $\gro_g$ and $\gro_\bfw$, respectively. In fact, $\gra$ is a generator in $H^2(\grS_g,\bbz)$ and $\grb$ is a generator in $H^2_{orb}(\bbc\bbp(\bfw),\bbz)$. It follows from the definition of the $(l,1)$-join that $\gra$ pulls back to a generator and $\grb$ pulls back to $l$ times a generator. Thus, since $\pi^*\gro=d\eta_{l,\bfw}$ we have $l\pi^*\gra+\pi^*\grb=0$. So we can take $\pi^*\gra=\grg$ and $\pi^*\grb=-l\grg$ with $\grg$ a generator, or equivalently $\pi^*PD(E_0)=-lw_1w_2\grg$ where $PD$ denotes Poincar\'e dual. This gives Equation (\ref{c1D}) and proves the result.
\end{proof}

Combining Lemma \ref{c1} and Proposition \ref{s3sigma} we have

\begin{theorem}\label{diffeotype2}
The Sasakian 5-manifold $M^5_{g,l,\bfw}$ is diffeomorphic to $\grS_g\times S^3$ if $l|\bfw|$ is even and diffeomorphic to the non-trivial $S^3$-bundle over $\grS_g$ if $l|\bfw|$ is odd.
\end{theorem}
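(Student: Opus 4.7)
The proof will be a short corollary of the two results immediately preceding it, together with the standard identification between Stiefel--Whitney classes of the total space and of the contact distribution. The plan is to combine Proposition \ref{s3sigma}, which asserts that there are precisely two oriented $S^3$-bundles over $\grS_g$ distinguished by $w_2$, with Lemma \ref{c1}, which computes $w_2(M^5_{g,l,\bfw})$ in terms of the data $(l,\bfw)$.

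First I would invoke the fact, cited in the paragraph preceding the theorem via Proposition 7.6.7 of \cite{BG05}, that $M^5_{g,l,\bfw}=M^3_g\star_{l,1}S^3_\bfw$ is an $S^3$-bundle over $\grS_g$. By Proposition \ref{s3sigma} its total space is diffeomorphic either to $\grS_g\times S^3$ or to $\grS_g\tilde{\times}S^3$, depending only on whether $w_2(M^5_{g,l,\bfw})$ vanishes or not. Thus the problem reduces to a single mod-two computation.

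Next I would appeal to Lemma \ref{c1}, which produces
\begin{equation*}
c_1(\cald_{l,\bfw})=(2-2g-l|\bfw|)\,\grg,
\end{equation*}
with $\grg$ a generator of $H^2(M^5_{g,l,\bfw},\bbz)\cong\bbz$, and which records as a consequence that
\begin{equation*}
w_2(M^5_{g,l,\bfw})\equiv l|\bfw|\,\grg\pmod{2}.
\end{equation*}
Behind this identification is the splitting $TM^5_{g,l,\bfw}=\cald_{l,\bfw}\oplus\langle R_{l,\bfw}\rangle$ of the tangent bundle into the contact distribution and the trivial line bundle spanned by the Reeb field, so that $w_2(TM^5_{g,l,\bfw})=w_2(\cald_{l,\bfw})$; and the fact that the CR structure endows $\cald_{l,\bfw}$ with a complex structure, whence $w_2(\cald_{l,\bfw})$ is the mod-two reduction of $c_1(\cald_{l,\bfw})$. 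Since $\grg$ is an integer generator of a cyclic $\bbz$-summand, its mod-two reduction generates $H^2(M^5_{g,l,\bfw},\bbz_2)\cong\bbz_2$ and is in particular non-zero, so the displayed congruence vanishes if and only if $l|\bfw|$ is even.

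Combining these observations yields the theorem: if $l|\bfw|$ is even then $w_2(M^5_{g,l,\bfw})=0$, forcing $M^5_{g,l,\bfw}\cong\grS_g\times S^3$; if $l|\bfw|$ is odd then $w_2(M^5_{g,l,\bfw})\neq 0$, forcing $M^5_{g,l,\bfw}\cong\grS_g\tilde{\times}S^3$. No genuine obstacle arises in this argument since all the substantive content has already been packaged into Proposition \ref{s3sigma} and Lemma \ref{c1}; the only bookkeeping is the remark that the integral generator $\grg$ survives reduction modulo two, which is immediate from $H^2(M^5_{g,l,\bfw},\bbz)\cong\bbz$.
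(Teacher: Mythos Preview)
Your proposal is correct and follows exactly the paper's own approach: the paper simply writes ``Combining Lemma \ref{c1} and Proposition \ref{s3sigma} we have'' and states the theorem, with the identification $w_2(M^5_{g,l,\bfw})\equiv c_1(\cald_{l,\bfw})\bmod 2$ already recorded just before Lemma \ref{c1}. Your version merely spells out in more detail the tangent-bundle splitting and the survival of $\grg$ under mod-two reduction, which the paper takes for granted.
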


Theorem \ref{diffeotype2} and Lemma \ref{c1} imply

\begin{corollary}\label{infcont}
There are countably infinite distinct contact structures of Sasaki type on both $\grS_g\times S^3$ and $\grS_g\tilde{\times}~ S^3$.
\end{corollary}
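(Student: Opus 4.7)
The plan is to invoke Lemma \ref{c1} to use the first Chern class of the contact bundle as the distinguishing invariant. Since $c_1(\cald)$ is a contactomorphism invariant and $H^2(M^5_{g,l,\bfw},\bbz)\approx\bbz$, the integer $n_{l,\bfw}:=|2-2g-l|\bfw||$ (the absolute value taking care of the sign ambiguity in the choice of generator $\grg$) is a well-defined invariant of the contact structure $\cald_{l,\bfw}$ on the underlying smooth manifold. So to prove the corollary it suffices, on each of the two diffeomorphism types furnished by Theorem \ref{diffeotype2}, to exhibit a countable family of pairs $(l,\bfw)$ for which $n_{l,\bfw}$ takes infinitely many distinct values.

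For the trivial bundle $\grS_g\times S^3$, Theorem \ref{diffeotype2} requires $l|\bfw|$ to be even. I would take the simplest such family: fix $\bfw=(1,1)$ (so $|\bfw|=2$) and let $l$ range over $\bbz^+$. Then $l|\bfw|=2l$ is even, so each $M^5_{g,l,(1,1)}$ is diffeomorphic to $\grS_g\times S^3$, and
\[
n_{l,(1,1)}=|2-2g-2l|=2l+2g-2
\]
(using $g\geq 1$ and $l\geq 1$), which gives a strictly increasing sequence of positive integers as $l$ varies. The resulting contact structures $\cald_{l,(1,1)}$ are pairwise inequivalent.

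For the non-trivial bundle $\grS_g\tilde{\times}S^3$ we need $l|\bfw|$ odd, which forces both $l$ and $|\bfw|=w_1+w_2$ to be odd. I would take $l=1$ and $\bfw=(1,2k)$ for $k\in\bbz^+$, so that $|\bfw|=2k+1$ is odd and $\gcd(w_1,w_2)=1$. Then
\[
n_{1,(1,2k)}=|2-2g-(2k+1)|=2k+2g-1,
\]
which again gives a strictly increasing sequence of positive integers. These contact structures are therefore pairwise inequivalent on $\grS_g\tilde{\times}S^3$.

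The only potential obstacle is verifying that $n_{l,\bfw}$ really is a contact invariant, and in particular that the sign ambiguity in the generator $\grg$ is harmless. This follows from the fact that any diffeomorphism of $M^5_{g,l,\bfw}$ induces multiplication by $\pm 1$ on $H^2(M,\bbz)\approx\bbz$, so $c_1(\cald)$ is determined up to sign, and hence $|c_1(\cald)|$ is a genuine invariant. No further delicate arguments (such as the classification of contact structures up to isotopy) are needed, since we only claim distinctness, not a classification.
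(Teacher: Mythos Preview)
Your proof is correct and follows essentially the same approach as the paper, which simply states that the corollary is implied by Theorem \ref{diffeotype2} and Lemma \ref{c1} without spelling out the details. Your choice of explicit families (and your care with the sign ambiguity in the generator $\grg$) fills in exactly the routine verification the paper omits; the paper's later Proposition \ref{regunique} uses the family $\cald_{1,(k+1,k)}$ rather than your $\cald_{1,(1,2k)}$ for the non-trivial bundle, but this is an inessential difference since both give $|\bfw|=2k+1$.
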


\subsection{The Sasaki Cone and Deformed Sasakian Structures}\label{defss}
Recall the (unreduced) Sasaki cone \cite{BGS06}. Let $\cals_0=(\xi_0,\eta_0,\Phi_0,g_0)$ be a Sasakian structure and let $\gA\gu\gt(\cals_0)$ its group of automorphisms. We denote the Lie algebra of infinitesimal automorphisms of $\cals_0$ by $\ga\gu\gt(\cals_0)$. We let $\gt_k\subset \ga\gu\gt(\cals_0)$ denote the Lie algebra of a maximal torus in $\gA\gu\gt(\cals_0)$, which is unique up to conjugacy. It has rank $k$. The unreduced Sasaki cone is given by
$$\gt_k^+=\{\xi\in \gt_k~|~\eta_0(\xi)>0\}.$$
Here we consider the Sasaki cone of our Sasakian structure $\cals_{l,\bfw}=(\xi_\bfw,\eta_{l,\bfw},\Phi_\bfw,g_\bfw)$. Recall from Section \ref{joinsec} that on $M^3_g\times S^3_\bfw$ we have the Lie algebra $\gt_3$ generated by $\xi_1,H_1,H_2$. Let $\gt_1(\bfw)$ be the Lie algebra generated by the vector field $L_\bfw \in\gt_3$ of Equation (\ref{s1action}) with $l_2=1$ of course. There is an exact sequence of Abelian Lie algebras
$$0\ra{1.8}\gt_1(\bfw)\ra{1.8} \gt_3\ra{1.8} \gt_2(\bfw)\ra{1.8} 0,$$
and we view the quotient algebra $\gt_2(\bfw)=\gt_3/\gt_1(\bfw)$ as a Lie algebra on $M^5_{g,l,\bfw}$.
Then the unreduced Sasaki cone $\gt_2^+(\bfw)$ of $M^5_{g,l,\bfw}$  is defined by
\begin{equation}\label{sascone}
\gt_2^+(\bfw)=\{\xi\in \gt_2(\bfw)~|~\eta_{l,\bfw}(\xi)>0\}.
\end{equation}

We can take $\xi_\bfw,H_1$ as a basis for $\gt_2(\bfw)$. Then for $R\in\gt_2^+(\bfw)$ writing $R=a\xi_\bfw+bH_1$ shows that we must have $a>0$ and $aw_1+b>0$. We can also write 
$$R=a\xi_\bfw+bH_1=(aw_1+b)H_1+aw_2H_2=v_1H_1+v_2H_2=\xi_\bfv$$
which identifies the Sasaki cone of $M^5_{g,l,\bfw}$ with the Sasaki cone of $S^3$. All Sasakian structures $\cals_\bfv=(\xi_\bfv,\eta_{l,\bfv},\Phi_\bfv,g_\bfv)$ in the Sasaki cone $\gt_2^+(\bfw)$ have the same underlying CR structure, namely $(\cald_{l,\bfw},J_\bfw)$. We have
\begin{equation}\label{newsas}
\eta_{l,\bfv}=\frac{\eta_{l,\bfw}}{\eta_{l,\bfw}(\xi_\bfv)}, \qquad \Phi_\bfv |_{\cald_{l,\bfw}}=\Phi_\bfw |_{\cald_{l,\bfw}}=J_\bfw.
\end{equation}
It is the reduced Sasaki cone $\grk(\cald,J)$ that can be thought of as the moduli space of Sasakian structures associated to an underlying CR structure. It is simply the quotient of the unreduced Sasaki cone by the Weyl group of the CR automorphism group. This action amounts to ordering either the $w_i$s or  the $v_i$s. As we shall see shortly it is more convenient to order the $w_i$s.

We first consider the smooth join, that is,  $\bfw=(1,1)$. 

\begin{lemma}\label{sasconeprop}
Consider the Sasakian structure $\cals_{k,(1,1)}=(\xi_{k,(1,1)},\eta_{k,(1,1)},\Phi_\grt,g)$ on the 5-manifold $M^5_{g,k,(1,1)}$ with $\Phi_\grt |_{\cald_{k,(1,1)}}=J \in \gJ$.
Let $X_{2m}$ denote the infinitesimal generator of the induced Hamiltonian circle action on $M^5_{g,k,(1,1)}$. 
\begin{enumerate}
\item If 
$J$ is a case (2) or a case(3)
the Sasaki cone has dimension two and  is determined by 
$$\grk(\cald_{k,(1,1)},J)=\{a\xi_{k,(1,1)}+bX_{2m}~|~a+b\eta_2(X_{2m})>0\},$$
where $\eta_2$ is the standard contact form on $S^3$. 
\item If $J$ is a case (1) the Sasaki cone $\grk(\cald_{k,(1,1)},J)$ has dimension one consisting only of the ray of the Reeb vector field $\xi_{k,(1,1)}$.
\end{enumerate}
\end{lemma}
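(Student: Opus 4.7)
The plan is to identify the Sasaki cone $\grk(\cald_{k,(1,1)},J)$ with the positive part of the Lie algebra of a maximal torus in the CR automorphism group $\gC\gR(\cald_{k,(1,1)},J)$, and then to translate this, via the Boothby--Wang construction, into a count of real holomorphic Hamiltonian vector fields on the Kähler quotient. Any Reeb field $\xi$ in the cone has a transverse Kähler structure whose leaf space is a Kähler orbifold whose underlying complex surface is the ruled surface $\bbp(E)\to\grS_g$ determined by $J$. Reeb deformations in the cone, modulo the ray of $\xi_{k,(1,1)}$ itself, then correspond canonically to real Killing potentials on this quotient, i.e.\ to the real Lie algebra $\gh(\bbp(E))$ of holomorphic Hamiltonian vector fields. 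Hence
\[
\dim \grk(\cald_{k,(1,1)},J) \;=\; 1 + \dim_{\bbr}\gh(\bbp(E)).
\]

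For part (2), where $J$ is of case (1), the bundle $E$ is indecomposable, so by Lemma~1 of \cite{ACGT11} (covering both sub-cases (1)(a) and (1)(b); note Proposition~\ref{hamred} already gives the irreducible sub-case (1)(a)) one has $\gh(\bbp(E))=0$. The Sasaki cone is therefore the one-dimensional positive ray $\bbr^+\cdot\xi_{k,(1,1)}$.

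For part (1), where $J$ is of case (2) or case (3), I first observe that since $l|\bfw|=2k$ is even, Theorem~\ref{diffeotype2} forces $M^5_{g,k,(1,1)}\cong\grS_g\times S^3$, and the Kähler quotient is therefore diffeomorphic to $\grS_g\times S^2$, i.e.\ a pseudo-Hirzebruch surface $S_{2m}$ of even type. By the discussion of Section~\ref{hamvf} this surface carries the Hamiltonian circle action $\tilde{\cala}_{2m}(\grl)$, and since the base $\grS_g$ with $g\geq 1$ has discrete automorphism group, the connected component of $\mathrm{Aut}(S_{2m})$ is exactly one-dimensional. Thus $\dim_{\bbr}\gh(S_{2m})=1$, giving a two-dimensional Sasaki cone whose second generator is the commuting lift $X_{2m}$ of the infinitesimal generator of $\tilde{\cala}_{2m}(\grl)$. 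To finish, the explicit positivity condition follows because $X_{2m}$ lives in the quotient algebra $\gt_2(\bfw)=\gt_3/\gt_1(\bfw)$ of Section~\ref{defss}, arising from the torus generated by $H_1,H_2$ on the $S^3_\bfw$ factor; hence $\eta_1(X_{2m})=0$ and, since $\bfw=(1,1)$ gives $\eta_\bfw=\eta_2$, one computes
\[
\eta_{k,(1,1)}\bigl(a\xi_{k,(1,1)}+bX_{2m}\bigr)\;=\;a\;+\;b\,\eta_2(X_{2m}),
\]
yielding the stated description of the cone.

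The main obstacle is the Hamiltonian-to-CR lift step in part (1): one must verify carefully that the Hamiltonian circle action on $S_{2m}$ lifts, via the Boothby--Wang orbifold circle bundle of Section~\ref{joinsec}, to a CR automorphism of $(\cald_{k,(1,1)},J)$ that commutes with the Reeb flow, and that this lift is unique up to rescaling by $\xi_{k,(1,1)}$. This is standard once the contact moment map is matched with the Kähler moment map, but it is the only part of the argument that is not purely formal; everything else is a direct application of Lemma~1 of \cite{ACGT11}, Theorem~\ref{diffeotype2}, and the structure of $\gt_2(\bfw)$ described in Section~\ref{defss}.
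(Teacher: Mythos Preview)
Your argument is correct and follows essentially the same line as the paper's: both use the Boothby--Wang diagram of the join construction to identify the Sasaki cone with the positive cone in $\gt_2$, and both reduce the dimension count to the presence or absence of Hamiltonian Killing vector fields on the ruled-surface quotient (Lemma~1 of \cite{ACGT11} for case~(1), the circle action $\tilde{\cala}_{2m}$ for cases~(2) and~(3)). The paper's own proof is in fact only a terse sketch---it writes down the diagram \eqref{s1comdia} and the generators $L,\xi_k$ and then stops, implicitly deferring the remaining steps to \cite{BoTo11}---so your write-up is considerably more complete.

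One small slip: you justify $\dim_\bbr\gh(S_{2m})=1$ by saying that $\grS_g$ with $g\geq 1$ has discrete automorphism group. This is false for $g=1$, where the torus carries a continuous family of translations. The conclusion is nonetheless correct, because those translations lift to holomorphic vector fields on $S_{2m}$ that are \emph{not} Hamiltonian (they have no zeros), so they do not contribute to $\gh(S_{2m})$ and hence not to the Sasaki cone. You should replace the ``discrete automorphism'' remark with this observation, or simply cite Lemma~1 of \cite{ACGT11} directly for the decomposable case as well, which is what the paper does elsewhere.
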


\begin{proof}
As in \cite{BoTo11} for $i=1,2$ we let $(\eta_i,\xi_i)$ denote the contact 1-form and its Reeb vector field on $M^3_g$ and $S^3$, respectively and consider the commutative diagram
\begin{equation}\label{s1comdia}
\begin{matrix}  M_g^3\times S^3 &&& \\
                          &\searrow && \\
                          \decdnar{} && M^5_{g,k,(1,1)} &\\
                          & \swarrow && \\
                          \grS_g\times S^2 &&&,
\end{matrix}
\end{equation}
where the vertical arrow is the natural $T^2$-bundle projection map generated by the vector fields
\begin{equation}\label{Leqn}
L=\frac{1}{2k}\xi_1-\frac{1}{2}\xi_2,~\qquad \xi_{k}=\frac{1}{2k}\xi_1+\frac{1}{2}\xi_2. 
\end{equation}
The vector field $L$ generates the circle action of the southeast arrow, and $\xi_k$ generates the circle action of the southwest arrow, and it is the Reeb vector field of the contact 1-form $\eta_{k}=k\eta_1+\eta_2$.
\end{proof}

We now want to describe the K\"ahler orbifold associated to the Sasakian structure $\cals_\bfv$ when this structure is quasi-regular. For this purpose we can assume that $v_1$ and $v_2$ are relatively prime positive integers.

\begin{proposition}\label{defsas}
Let $\bfv=(v_1,v_2)$ with $v_1,v_2\in \bbz^+$ and $\gcd(v_1,v_2)=1$, and let $\xi_\bfv$ be a Reeb vector field in the Sasaki cone $\gt_2^+(\bfw)$. Then the quotient of $M^5_{g,l,\bfw}$ by the circle action $S^1(\bfv)$ generated by $\xi_\bfv$ is a complex fiber bundle over $\grS_g$ whose fiber is the complex orbifold $\bbc\bbp(\bfv)$.
\end{proposition}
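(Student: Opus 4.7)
The plan is to lift everything to the product $M^3_g\times S^3$ and then reverse the order of two commuting circle quotients. Since $M^5_{g,l,\bfw}=(M^3_g\times S^3)/S^1(L_\bfw)$ with $L_\bfw=\frac{1}{2l}\xi_1-\frac{1}{2}\xi_\bfw\in\gt_3$, and the Reeb $\xi_\bfv\in\gt_2^+(\bfw)$ lifts to $\tilde\xi_\bfv:=v_1H_1+v_2H_2\in\gt_3$, the two one-parameter subgroups commute (they lie in the abelian $\gt_3$) and together span a closed $2$-torus $T^2\subset T^3$. Hence
\[
M^5_{g,l,\bfw}/S^1(\bfv)\;=\;(M^3_g\times S^3)/T^2.
\]

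I evaluate this by first quotienting by $S^1(\tilde\xi_\bfv)$. Since $\tilde\xi_\bfv$ acts only on the $S^3$ factor and $\gcd(v_1,v_2)=1$, the fiberwise quotient $S^3/S^1(\tilde\xi_\bfv)$ is by definition the weighted projective orbifold $\bbc\bbp(\bfv)$, so
\[
(M^3_g\times S^3)/S^1(\tilde\xi_\bfv)\;=\;M^3_g\times\bbc\bbp(\bfv).
\]
It remains to quotient by the residual circle, i.e.\ the image of $S^1(L_\bfw)$ in $T^2/S^1(\tilde\xi_\bfv)$. Completing $\tilde\xi_\bfv$ to a basis $\{\tilde\xi_\bfv,\eta_\bfv\}$ of the integer lattice of the $2$-torus acting on $S^3$ by means of Bezout ($\eta_\bfv=pH_1+qH_2$ with $v_1q-v_2p=1$), one writes $L_\bfw$ modulo $\tilde\xi_\bfv$ as $\frac{1}{2l}\xi_1-\frac{1}{2}(w_2v_1-w_1v_2)\eta_\bfv$. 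Thus the residual circle acts diagonally: on $M^3_g$ by (a reparametrization of) the principal Reeb $\xi_1$, and on $\bbc\bbp(\bfv)$ by the holomorphic $S^1$-action induced by $\eta_\bfv$ under the quotient $S^3\to\bbc\bbp(\bfv)$.

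Because the action on the $M^3_g$ factor is free and principal, the diagonal quotient is precisely the orbifold associated bundle
\[
Z_\bfv\;:=\;M^5_{g,l,\bfw}/S^1(\bfv)\;=\;M^3_g\times_{S^1}\bbc\bbp(\bfv)\;\longrightarrow\;\grS_g,
\]
with typical fiber the complex orbifold $\bbc\bbp(\bfv)$. The complex (in fact K\"ahler) structure on $Z_\bfv$ is supplied by the orbifold Boothby--Wang correspondence applied to the quasi-regular Sasakian structure $\cals_\bfv=(\xi_\bfv,\eta_{l,\bfv},\Phi_\bfv,g_\bfv)$, whose transverse K\"ahler structure descends to $Z_\bfv$ (cf.\ Chapter~7 of \cite{BG05}); the projection to $\grS_g$ is holomorphic because the horizontal $\xi_1$-direction on $M^3_g$ is preserved by $\Phi_\bfv$.

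The one point that requires attention is the verification that $L_\bfw$ and $\tilde\xi_\bfv$ really do generate a closed $2$-torus, so that the residual one-parameter subgroup is a genuine $S^1$ and the quotient is a Hausdorff orbifold rather than a foliated space with dense leaves. This is automatic here, since after a harmless rescaling both vectors lie in the integer lattice of the maximal torus of $M^3_g\times S^3$; granted this, the rest of the argument is the standard associated bundle construction.
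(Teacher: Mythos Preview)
Your argument is correct and follows the same underlying idea as the paper's proof: lift to $M^3_g\times S^3$ and compute the quotient by the $2$-torus generated by $L_\bfw$ and $\xi_\bfv$. The paper carries this out in the opposite order from you: it quotients first by $L_\bfw$ (recovering $M^5_{g,l,\bfw}$) and then by $\xi_\bfv$, and exhibits the fibration over $\grS_g$ by observing directly from the explicit $T^2$-action $(x,u;z_1,z_2)\mapsto(x,e^{i\theta}u;e^{i(v_1\phi-lw_1\theta)}z_1,e^{i(v_2\phi-lw_2\theta)}z_2)$ that the torus acts in the fibers of the obvious projection $M^3_g\times S^3\to\grS_g$, so the fiber of the induced map $B_{\bfv,\bfw}\to\grS_g$ is $S^3/S^1(\bfv)=\bbc\bbp(\bfv)$.

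Your reversal of the order—first quotienting by $\tilde\xi_\bfv$ to get $M^3_g\times\bbc\bbp(\bfv)$, then using B\'ezout to identify the residual circle and recognize the result as the associated bundle $M^3_g\times_{S^1}\bbc\bbp(\bfv)$—is a clean alternative that makes the bundle structure manifest without needing to inspect fibers by hand. It also yields a bit more information (the twist of the associated bundle is governed by $w_2v_1-w_1v_2$), which the paper extracts later in Lemma~\ref{quasireglem} by a separate argument. The paper's direct approach is shorter and avoids the B\'ezout bookkeeping; yours is more structural. Both are fine.
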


\begin{proof}
We know from general principles that the quotient $M^5_{g,l,\bfw}/S^1(\bfv)$ is a projective algebraic orbifold with an induced orbifold K\"ahler structure. We denote this K\"ahler orbifold by $B_{\bfv,\bfw}$, and consider the 2-dimensional subalgebra $\gt_2(\bfv,\bfw)$ of $\gt_3$ generated by the vector fields $L_\bfw$ and $\xi_\bfv$ on $M^3_g\times S^3$. The $T^2$ action generated by $\gt_2(\bfv,\bfw)$ on $M^3_g\times S^3_\bfw$ is given by 
\begin{equation}\label{t2action}
(x,u;z_1,z_2)\mapsto (x,e^{i\theta}u;e^{i(v_1\phi-lw_1\theta)}z_1,e^{i(v_2\phi-lw_2\theta)}z_2),
\end{equation}
where $(x,u)\in M^3_g$ with $u$ in the fiber of the bundle $\grr:M^3_g\ra{1.6}\grS_g$, and $(z_1,z_2)\in S^3_\bfw$. By quotienting first by the circle action generated by $L_\bfw$ gives the following commutative diagram
\begin{equation}\label{comdia1}
\begin{matrix}  M^3_g\times S^3_\bfw &&& \\
                          &\searrow && \\
                          \decdnar{\pi_B} && M^5_{g,l,\bfw} &\\
                          &\swarrow && \\
                          B_{\bfv,\bfw} &&& 
\end{matrix}
\end{equation}
where $\pi_B$ is the quotient projection by the 2-torus generated by $\gt_2(\bfv,\bfw)$, the southeast arrow is the quotient projection by the circle action generated by $L_\bfw$, and the southwest arrow is the quotient projection generated by $S^1(\bfv)$. A point of $B_{\bfv,\bfw}$ is given by the equivalence class $[x,u;z_1,z_2]$ defined by the $T^2$ action (\ref{t2action}). We claim that $B_{\bfv,\bfw}$ is a bundle over $\grS_g$ with fiber $\bbc\bbp(\bfv)$. To see this consider the projection $\pi:M^3_g\times S^3_\bfw\ra{1.6} \grS_g$ defined by $\pi=\grr\circ\pi_1$ where $\pi_1:M^3_g\times S^3_\bfw\ra{1.6} M^3_g$ is projection onto the first factor. We have
$$\pi(x,e^{i\theta}u;e^{iv_1\phi-lw_1\theta}z_1,e^{iv_2\phi-lw_2\theta}z_2)= \pi(x,u;z_1,z_2)=x,$$
so the torus acts in the fibers of $\pi$. This gives a map $\tau:B_{\bfv,\bfw}\ra{1.6} \grS_g$ defined by $\tau([x,u;z_1,z_2])=x$, so $\pi$ factors through $B_{\bfv,\bfw}$ giving the commutative diagram
\begin{equation}\label{s2comdia}
\begin{matrix}  M^3_g\times S^3_\bfw &&& \\
                          &\searrow\pi_B && \\
                          \decdnar\pi && B_{\bfv,\bfw} &\\
                          &\swarrow\tau && \\
                          \grS_g &&& .
\end{matrix}
\end{equation}
Furthermore, from the action (\ref{t2action}) the fibers of $\tau$ are the weighted projective spaces $\bbc\bbp(\bfv)$, and the complex structure $\cJ_\bfw$ on $B_{\bfv,\bfw}$ is that induced by $J_\bfw$ on $M^5_{g,l,\bfw}$.
\end{proof}

We call the K\"ahler orbifold $B_{\bfv,\bfw}$ an {\it orbifold pseudo-Hirzebruch surface}. Notice that $B_{\bfv,\bfw}$ inherits a Hamiltonian circle action from the factor algebra $\gt_3/\gt_2(\bfv,\bfw)$. This algebra is generated by the vector field on $B_{\bfv,\bfw}$ induced by say, $H_1$ which by abuse of notation we also denote $H_1$. This vector field is also holomorphic with respect to the complex structure $\cJ_\bfw$ on $B_{\bfv,\bfw}$,

\subsection{Regular Sasakian Structures}\label{regsas}
It follows from Proposition \ref{defsas} that each Sasaki cone $\grk(M^5_{g,l,\bfw},J_\bfw)$ contains a unique ray of regular Sasakian structures determined by setting $\bfv=(1,1)$. Then we have Reeb vector field $R=H_1+H_2$ and $B_{1,\bfw}$ is a pseudo-Hirzebruch surface with trivial orbifold structure. By the Leray-Hirsch Theorem the homology (cohomology) groups are obtained from the tensor product of the homology (cohomology) groups of the base  and the fiber (see Section 1.3 of \cite{ACGT08}). Thus, the first Chern class satisfies 
\begin{equation}\label{c14man}
c_1(B_{1,\bfw})=(2PD(E_n)+(2-2g-n)PD(F)
\end{equation}
where the divisors $E_n$ and $F$ satisfy $E_n\cdot E_n=n, E_n\cdot F=1$ and $F\cdot F=0$. Since the second Stiefel-Whitney class is the mod 2 reduction of $c_1$, we see that $B_{1,\bfw}$ is diffeomorphic to $\grS_g\times S^2$ when $n$ is even and diffeomorphic to $\grS_g\tilde{\times} S^2$ when $n$ is odd.

When $n=2m$ is even, we have $PD(F)=[\gro_g]$ and $PD(E_{2m})=m[\gro_g]+[\gro_0]$  where the class $[\gro_g] ([\gro_0])$ represents the area form of $\grS_g$ (the fiber $\bbc\bbp^1$), respectively.   If $\pi:M^5_{g,l,\bfw}\ra{1.5} B_{1,\bfw}$ denotes the $S^1$ bundle map, we have 
\begin{equation}\label{c1=}
\pi^*c_1(B_{1,\bfw})=c_1(\cald_{l,\bfw}). 
\end{equation}
Writing the symplectic class on $B_{1,\bfw}$ as 
\begin{equation}\label{kahclass1}
[\gro]=k_1[\gro_0]+k_2[\gro_g]=k_1PD(E_{2m}) +(k_2-mk_1)PD(F)
\end{equation}
for some relatively prime positive integers $k_1,k_2$. 

When $n=2m+1$ is odd, we have $PD(F)=[\gro_g]$ and $PD(E_{2m+1})=PD(E_1)+mPD(F)$, so
\begin{equation}\label{kahclass2}
[\gro]=k_1h+k_2[\gro_g]=k_1PD(E_{2m+1}) +(k_2-mk_1)PD(F)
\end{equation}
where $h = PD(E_1)$. 

Let us thus generally write the symplectic class generally as 
\begin{equation}\label{genkahclass}
[\gro]=k_1h+k_2[\gro_g]
\end{equation}
where if $n$ is even $h=[\gro_0]$, and if $n$ is odd $h = PD(E_1)$. 
Then in both cases $h$ and $[\gro_g]$ are primitive integral classes on $B_{1,\bfw}$. 
It is important to realize that the integers $(k_1,k_2,n)$ should be completely determined by the integers $(g,l,w_1,w_2)$.

\begin{lemma}\label{parlem}
The following relations hold:
\begin{enumerate}
\item $n=l|\bfw|-2lw_2=l(w_1-w_2).$
\item $k_1=1$.
\item $k_2 = \begin{cases}
\frac{1}{2}l|\bfw|, &\text{if $l|\bfw|$ is even;}\\
\frac{1}{2}(l|\bfw|-1) &\text{if $l|\bfw|$ is odd.}
\end{cases}$
\end{enumerate} 
Thus, $l$ divides $n$, and the parity of $n$ coincides with the parity of $l|\bfw|$.
\end{lemma}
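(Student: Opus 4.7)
The plan is to exploit the principal $S^1$-bundle $\pi:M^5_{g,l,\bfw}\to B_{1,\bfw}$ given by the regular Reeb $\xi_\bfv=H_1+H_2$, together with the residual Hamiltonian $S^1$-action on $B_{1,\bfw}$ from $\gt_3/\gt_2(\bfv,\bfw)$. This residual action has two fixed sections of $B_{1,\bfw}\to\grS_g$, namely the images in $B_{1,\bfw}$ of $\{z_1=0\}$ and $\{z_2=0\}$ in $S^3_\bfw$. Taking $E_n$ to be the first, I will establish (1) by computing its self-intersection directly. In a neighborhood of $\{z_1=0\}\subset M^3_g\times S^3_\bfw$ I use the $H_1+H_2$-action to gauge-fix $\arg z_2=0$ and compensate for this gauge in $L_\bfw=\frac{1}{2l}\xi_1-\frac{1}{2}(w_1H_1+w_2H_2)$; the residual $L_\bfw$-action then takes the form
\[
(u,z_1)\longmapsto\bigl(e^{it/2l}u,\ e^{i(w_1-w_2)t/2}z_1\bigr).
\]
Reparametrizing by $s=t/(2l)$ so that $u$ rotates with weight one, the normal direction $z_1$ rotates with weight $l(w_1-w_2)$. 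Since $M^3_g\to\grS_g$ is the $S^1$-bundle whose Chern class generates $H^2(\grS_g,\bbz)$, the normal bundle of $E_n$ in $B_{1,\bfw}$ is the associated complex line bundle of degree $l(w_1-w_2)$, whence $n=E_n\cdot E_n=l(w_1-w_2)$. The divisibility $l\mid n$ and the parity statement $n\equiv l|\bfw|\pmod 2$ then follow from $n=l|\bfw|-2lw_2$.

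For (2) and (3), the Boothby--Wang relation $d\eta_{l,\bfv}=\pi^*\gro$ gives $\pi^*[\gro]=0$, i.e.\ $k_1\pi^*h+k_2\pi^*[\gro_g]=0$. Because $M^5_{g,l,\bfw}\to\grS_g$ is an $S^3$-bundle, its Leray--Serre spectral sequence yields $H^2(M^5_{g,l,\bfw},\bbz)\cong\bbz$ with $\pi^*[\gro_g]$ a generator $\grg$. Substituting $PD(E_n)=h+m\,PD(F)$ into Equation (\ref{c14man}) collapses the $m$'s, producing $c_1(B_{1,\bfw})=2h+(2-2g)[\gro_g]$ in the even case $n=2m$ and $c_1(B_{1,\bfw})=2h+(1-2g)[\gro_g]$ in the odd case $n=2m+1$. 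Pulling back and equating with $c_1(\cald_{l,\bfw})=(2-2g-l|\bfw|)\grg$ from Lemma \ref{c1} forces $\pi^*h=-\tfrac{l|\bfw|}{2}\grg$ or $-\tfrac{l|\bfw|-1}{2}\grg$ accordingly; both expressions are integers precisely because of the parity match established in (1).

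It remains to pin down $k_1$, for which I use the homotopy long exact sequence of $S^1\to M^5_{g,l,\bfw}\to B_{1,\bfw}$. The fiber $S^1$ is a Hopf circle inside the $S^3$-fiber of $M^5_{g,l,\bfw}\to\grS_g$, hence null-homotopic in $M^5_{g,l,\bfw}$, so $\pi_1(S^1)\to\pi_1(M^5_{g,l,\bfw})$ is zero; together with $\pi_2(M^5_{g,l,\bfw})=0$ from (\ref{joinhomotopy}) this forces the connecting map $\pi_2(B_{1,\bfw})=\bbz\to\pi_1(S^1)=\bbz$ to be an isomorphism. Consequently the Euler class $[\gro]$ pairs to $\pm 1$ with the generating $\bbc\bbp^1$-fiber $F$, and since $\int_F h=1$ and $\int_F[\gro_g]=0$ this gives $k_1=\pm 1$; positivity forces $k_1=1$, after which the formulas for $\pi^*h$ above yield the values of $k_2$ in (3). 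I expect the principal bookkeeping hurdle to be keeping the sign conventions straight when identifying which of the two fixed sections is $E_n$, but this ambiguity is absorbed by the biholomorphism $S_n\cong S_{-n}$ noted in Section \ref{complexsubsec}.
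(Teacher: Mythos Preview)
Your argument is correct. Parts (1) and (3) follow the paper's approach closely: for (1) you gauge-fix by the regular Reeb orbit and read off the weight of the residual $L_\bfw$-action on the normal coordinate, which is exactly what the paper does via the substitution $\chi=\phi-lw_1\theta$ in the $T^2$-action; for (3) both you and the paper pull back $c_1(B_{1,\bfw})$ and match it against $c_1(\cald_{l,\bfw})$ from Lemma~\ref{c1}.

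The genuine difference is in (2). The paper argues via the intermediate $3$-manifold $M^3_g$: it factors $M^3_g\times S^3\to M^5_{k_2,k_1}\to B_{1,\bfw}$, observes that the residual $S^1$-action on the $M^3_g$-fiber has weight $k_1$, and deduces $\pi_1(M^5_{k_2,k_1})\cong\grG_3(g)/k_1\bbz$; comparing with $\pi_1(M^5_{g,l,\bfw})\cong\grG_2(g)\cong\grG_3(g)/\bbz$ then forces $k_1=1$. Your route is more elementary: you observe directly that the Reeb $S^1$-fiber is a Hopf circle inside an $S^3$-fiber of $M^5_{g,l,\bfw}\to\grS_g$, hence null-homotopic, and combine this with $\pi_2(M^5_{g,l,\bfw})=0$ to see that the connecting map $\pi_2(B_{1,\bfw})\to\pi_1(S^1)$ is an isomorphism, whence the Euler class $[\gro]$ evaluates to $\pm1$ on the fiber $F$, giving $k_1=1$. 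Your argument avoids the auxiliary group $\grG_3(g)$ and the non-splitting of (\ref{m3hyper}); the paper's argument, on the other hand, makes the bundle-theoretic origin of $k_1$ more visible and would adapt more readily to the $l_2>1$ case mentioned in Remark~\ref{l2}, where the fiber is a lens space rather than $S^3$.
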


\begin{proof}
To prove (1) we let $L_n$ denote a line bundle on $\grS_g$ of degree $n$. Then after defining $\chi=\phi-lw_1\theta$ the $T^2$ action (\ref{t2action}) with $\bfv=(1,1)$ becomes 
\begin{equation}\label{reft2action}
(x,u;z_1,z_2)\mapsto (x,e^{i\theta}u;e^{i\chi}z_1,e^{i(\chi+(l|\bfw|-2lw_2)\theta)}z_2),
\end{equation}
So we can identify $B_{1,\bfw}$ with $\bbp(E)$ where $E=\calo\oplus L_n$ where $n$ is given by the equation of the lemma. 

For (2) we notice that the $S^1$-bundle $\grr:M^5_{g,l,\bfw}\ra{1.5} B_{1,\bfw}$ is determined uniquely up to equivariant diffeomorphism by the relatively prime positive integers $(k_1,k_2)$, and we denote its total space by $M^5_{k_2,k_1}$.  When $n$ is even, the base space $B_{1,\bfw}$ is diffeomorphic to $\grS_g\times S^2$, and when $n$ is odd it is diffeomorphic to the non-trivial bundle $\grS_g\tilde{\times} S^2$. The circle bundles over $M^5_{k_2,k_1}$ are classified by elements of $H^2(M^5_{k_2,k_1},\bbz)\approx \bbz$, and up to orientation only one of these has total space $M^3_g\times S^3$. Now $M^3_g\times S^3$ is also the total space of the $T^2$ bundle over $B_{1,\bfw}$ represented by the generators $h,[\gro_g]\in H^2(B_{1,\bfw},\bbz)\approx \bbz^2$. We view this in stages as 
\begin{equation}\label{t2stages}
\begin{matrix}
S^1&\ra{1.8} & M^3_g\times S^3 \\
&& \decdnar{\tau} \\
S^1&\ra{1.8} & M^5_{k_2,k_1} \\
&&\decdnar{\pi} \\
&& B_{1,\bfw}
\end{matrix}
\end{equation}
From the bundle map $\pi$ we have a relation, namely, $\pi^*[\gro]=0$, so we can choose $\pi^*h=-k_2\grg$ and $\pi^*[\gro_g]=k_1\grg$ where $\grg$ is a generator of $H^2(M^5_{k_2,k_1},\bbz)$. So the residual bundle map $\tau$ has $S^1$ action on the fiber of the first factor given by 
$u\mapsto e^{ik_1\theta}u$. Moreover, $\pi_2(M^5_{k_2,k_1})=0$. So the homomorphism $\grd$
in the homotopy exact sequence 
\begin{equation}\label{homexact2}
0\ra{1.8} \bbz\fract{\grd}{\ra{1.8}}\pi_1(M^3_g\times S^3)\ra{1.8} \pi_1(M^5_{k_2,k_1})\ra{1.8} 1
\end{equation}
is multiplication by $k_1$. Thus, we have 
$$\pi_1(M^5_{k_2,k_1})\approx \grG_3(g)/k_1\bbz.$$
But since $M^5_{k_2,k_1}$ is diffeomorphic to $M^5_{g,l,\bfw}$ by \eqref{joinhomotopy} and the exact sequence (\ref{m3hyper}) we have
$$\grG_3(g)/\bbz\approx \grG_2(g)\approx \pi_1(M^5_{g,l,\bfw})\approx \grG_3(g)/k_1\bbz$$
which is a contradiction unless $k_1=1$.

For (3) we notice that the Equations (\ref{c1D}), (\ref{c14man}) and (\ref{c1=}) give the relations
\begin{equation}\label{regconstraint}
(2-2g)(k_1-1)+l|\bfw| = 
\begin{cases}
2k_2, &\text{for $n$ even;}\\
2k_2+k_1 &\text{for $n$ odd}
\end{cases}
\end{equation}
which using (2) proves (3) and finishes the proof.
\end{proof}

Note that for $n\neq 0$ the pseudo-Hirzebruch surfaces $S_n$ and $S_{-n}$ are equivalent, so we can order the relatively prime integers $w_1,w_2$ such that $w_1\geq w_2$.
For notational convenience we write $k_2=k$. Then in both the even and odd cases the conditions for having a K\"ahler class is the same, namely $m<k$. Of course, the underlying 4-manifolds are different in the two cases.

Lemma \ref{parlem} says that for each triple $(l,w_1,w_2)$ of positive integers with $\gcd(w_1,w_2)=1$ determines a pseudo-Hirzebruch surface $S_n$ with a symplectic class given by 
\begin{equation}\label{symclass}
[\gro]=h+k[\gro_g],
\end{equation}
with $k\in\bbz^+$ and where $h=[\gro_0]=PD(E_0)$ in the even case, and $h=PD(E_1)$ in the odd case.
A converse statement also holds, namely

\begin{lemma}\label{evenHir}
With $k\in\bbz^+$ and $n\in\bbn$ fixed, there is a unique positive integer $l$ and unique ordered pair of relatively prime positive integers $(w_1,w_2)$ such that $[\gro]$ of Equation (\ref{symclass}) is a symplectic class on the corresponding $B_{1,\bfw}$, and we have
\begin{equation*}
\bfw=(w_1,w_2)= 
\begin{cases}
(\frac{k+m}{l},\frac{k-m}{l}) &\text{if $n=2m$;} \\
(\frac{k+m+1}{l},\frac{k-m}{l}) &\text{if $n=2m+1$.}
\end{cases}
\end{equation*}
Furthermore, $[\gro]$ is a K\"ahler class in either case if and only if $m<k$.
\end{lemma}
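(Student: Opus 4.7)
The strategy is to invert Lemma \ref{parlem}, which records how $(n,k_1,k_2)$ is determined by $(l,\bfw)$, and then read off the K\"ahler criterion from Lemma \ref{Kahclass}. By Lemma \ref{parlem}(2) we must have $k_1=1$, so we set $k_2=k$. I would then split on the parity of $n$, which by the last sentence of Lemma \ref{parlem} coincides with the parity of $l|\bfw|$.

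In the even case $n=2m$, the parity statement forces $l|\bfw|$ even, so Lemma \ref{parlem}(3) gives $l(w_1+w_2)=2k$, while Lemma \ref{parlem}(1) gives $l(w_1-w_2)=2m$. Adding and subtracting these two equations yields
\begin{equation*}
lw_1=k+m, \qquad lw_2=k-m,
\end{equation*}
which forces the claimed form $\bfw=((k+m)/l,(k-m)/l)$. In the odd case $n=2m+1$ the same parity statement forces $l|\bfw|$ odd, so $l(w_1+w_2)=2k+1$, and combining with $l(w_1-w_2)=2m+1$ gives $lw_1=k+m+1$ and $lw_2=k-m$, again matching the stated formula.

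For uniqueness of $l$, the requirement that $(w_1,w_2)$ be a \emph{coprime} ordered pair of positive integers forces
\begin{equation*}
l=\gcd(k+m,k-m) \quad\text{(even case)}, \qquad l=\gcd(k+m+1,k-m)\quad\text{(odd case)},
\end{equation*}
and this choice of $l$ in turn uniquely determines $(w_1,w_2)$. Positivity of $w_2$ requires $k>m$, which ties into the final assertion: applying Lemma \ref{Kahclass} to $B_{1,\bfw}$, which is a pseudo-Hirzebruch surface lying in case (2) when $n=0$ (so $m=0$) and in case (3) when $n>0$, the condition for $\alpha_{k_1,k_2}=\alpha_{1,k}$ to be a K\"ahler class becomes $k_2/k_1>m$, i.e.\ $k>m$, in both subcases.

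The only real obstacle is the bookkeeping: one has to use the parity constraint from Lemma \ref{parlem} at the right moment to determine which of the two formulas for $k_2$ is active, and then verify that the gcd characterization of $l$ is consistent with the coprimality of $\bfw$. Once the two linear relations $l(w_1\pm w_2)=\cdots$ are set up, the rest is routine linear algebra over $\bbz$ together with a direct appeal to Lemma \ref{Kahclass} for the K\"ahler statement.
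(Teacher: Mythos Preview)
Your proposal is correct and follows essentially the same route as the paper: both invert Lemma \ref{parlem} by solving the pair $l(w_1+w_2)=2k$ (or $2k+1$) and $l(w_1-w_2)=n$ for $lw_1,lw_2$, then use coprimality of $(w_1,w_2)$ to pin down $l$. The only cosmetic difference is that the paper phrases uniqueness by supposing two solutions $(l,\bfw),(l',\bfw')$ and deducing $lw_i=l'w_i'$, whereas you equivalently observe that $l$ must equal $\gcd(lw_1,lw_2)$; the K\"ahler criterion is handled identically via Lemma \ref{Kahclass}.
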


\begin{proof}
Let $k$ and $n$ be given and suppose that $l|\bfw|=l'|\bfw'|$ and $l(w_1-w_2)=l'(w_1'-w'_2)$. Adding and subtracting these equations give $lw_i=l'w'_i$ for $i=1,2$. But since $\gcd(w_1,w_2)=\gcd(w_1',w_2')=1$ this implies $l=l'$ and $\bfw=\bfw'$. The form for $(w_1,w_2)$ follows from Lemma \ref{parlem}, and the fact that $[\gro]$ is a K\"ahler class was discussed above.
\end{proof}

Lemma \ref{parlem} or Lemma \ref{evenHir} easily implies

\begin{corollary}\label{lemcor1}
If $n=0$ we must have $\bfw=(1,1),l=k$ and $B_{1,\bfw}$ is biholomorphic to $\grS_g\times \bbc\bbp^1$ with its product complex structure. If $n=1$ we must have $l=1,\bfw=(k+1,k)$ with $B_{1,\bfw}$ diffeomorphic to $\grS_g\tilde{\times} S^2$.
\end{corollary}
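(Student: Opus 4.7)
The plan is to extract both statements directly from Lemma \ref{parlem}, together with a brief inspection of the defining torus action (\ref{reft2action}) to pin down the complex structure in the $n=0$ case.

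First I would handle $n=0$. By Lemma \ref{parlem}(1) we have $n=l(w_1-w_2)=0$, and since $l\in\bbz^+$ this forces $w_1=w_2$. Coprimality of $(w_1,w_2)$ then gives $\bfw=(1,1)$, whence $|\bfw|=2$ and $l|\bfw|=2l$ is even, so Lemma \ref{parlem}(3) yields $k=k_2=\tfrac{1}{2}(2l)=l$, i.e.\ $l=k$. To identify the complex structure, I would substitute $n=0$ (equivalently $w_1=w_2$) into (\ref{reft2action}): the $\theta$-contribution in the $z_2$-slot disappears and the action collapses to
\[
(x,u;z_1,z_2)\mapsto (x,e^{i\theta}u;e^{i\chi}z_1,e^{i\chi}z_2),
\]
so the $S^3$-factor carries the diagonal (untwisted) $S^1$-action. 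Hence the associated rank-two bundle is $E=\calo\oplus\calo$ and $B_{1,\bfw}=\bbp(\calo\oplus\calo)\cong\grS_g\times\bbc\bbp^1$ with its product complex structure.

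Next, for $n=1$: Lemma \ref{parlem}(1) gives $l(w_1-w_2)=1$, which forces $l=1$ and $w_1-w_2=1$. Since $n=1$ is odd, Lemma \ref{parlem}(3) (or its concluding parity statement) implies $|\bfw|=w_1+w_2$ is odd and $k=k_2=\tfrac{1}{2}(|\bfw|-1)$, so $w_1+w_2=2k+1$. Combined with $w_1-w_2=1$ this uniquely yields $\bfw=(k+1,k)$. Finally, since $l|\bfw|=2k+1$ is odd, Theorem \ref{diffeotype2} (or equivalently the parity part of Lemma \ref{parlem} together with Proposition \ref{s3sigma} applied to the base) identifies $B_{1,\bfw}$ as the non-trivial $S^2$-bundle $\grS_g\tilde{\times}S^2$.

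Neither step presents a genuine obstacle; the only point requiring care is the $n=0$ identification of $B_{1,(1,1)}$ as the \emph{product} complex manifold (as opposed to $\bbp(\calo\oplus L)$ for some non-trivial degree-$0$ line bundle $L$), which is why I would explicitly exhibit the untwisting of the action (\ref{reft2action}) rather than merely invoke Lemma \ref{parlem}.
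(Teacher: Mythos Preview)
Your argument is correct and matches the paper's approach, which simply asserts that the corollary is an immediate consequence of Lemma~\ref{parlem} (or Lemma~\ref{evenHir}); you have supplied the details the paper omits, including the careful identification of the product complex structure via the untwisted action~(\ref{reft2action}). The only quibble is a citation issue in the $n=1$ case: Theorem~\ref{diffeotype2} and Proposition~\ref{s3sigma} concern the five-dimensional $S^3$-bundle $M^5_{g,l,\bfw}$, not the base $B_{1,\bfw}$; the correct reference for the diffeomorphism type of $B_{1,\bfw}$ when $n$ is odd is the remark immediately following Equation~(\ref{c14man}).
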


Our construction together with Lemma \ref{c1} also gives
\begin{proposition}\label{regunique}
There are a countably infinite number of inequivalent contact structures $\cald_{k,(1,1)}$ of Sasaki type on $\grS_g\times S^3$ with $c_1(\cald_{k,(1,1)})=(2-2g-2k)\grg$, and a countably infinite number of inequivalent contact structures $\cald_{1,(k+1,k)}$ of Sasaki type on $\grS_g\tilde{\times}S^3$ with $c_1(\cald_{1,(k+1,k)})=(2-2g-2k-1)\grg$. Moreover, for each CR structure $(\cald_{l,\bfw},J_\bfw)$ there is precisely one regular ray in the Sasaki cone $\grk(\cald_{l,\bfw},J_\bfw)$, namely that determined by $\bfv=(1,1)$. 
\end{proposition}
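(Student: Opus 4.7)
The statement naturally splits into two parts: the claim about countably many inequivalent contact structures on each of the two diffeomorphism types, and the claim that each Sasaki cone $\grk(\cald_{l,\bfw},J_\bfw)$ contains a unique regular ray. My plan is to dispatch the first part as an immediate corollary of Lemma \ref{c1} together with Theorem \ref{diffeotype2}, and the second as an immediate corollary of Proposition \ref{defsas}.

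For the first part, I would simply substitute the relevant parameters into Lemma \ref{c1}. Taking $l=k$ and $\bfw=(1,1)$ yields $l|\bfw|=2k$, which is even, so Theorem \ref{diffeotype2} identifies $M^5_{g,k,(1,1)}$ as the trivial bundle $\grS_g\times S^3$, and Lemma \ref{c1} gives $c_1(\cald_{k,(1,1)})=(2-2g-2k)\grg$. Taking $l=1$ and $\bfw=(k+1,k)$ yields $l|\bfw|=2k+1$, which is odd, so Theorem \ref{diffeotype2} identifies the underlying manifold as $\grS_g\tilde{\times}S^3$, and Lemma \ref{c1} gives $c_1(\cald_{1,(k+1,k)})=(2-2g-2k-1)\grg$. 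Since $c_1$ of the contact bundle is an invariant of the contact structure up to contactomorphism, and since the integer coefficients $2-2g-2k$ (respectively $2-2g-2k-1$) take pairwise distinct values as $k$ ranges over $\bbz^+$, we obtain two countably infinite families of mutually inequivalent contact structures of Sasaki type on the two diffeomorphism types.

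For the second part, let $\bfv=(v_1,v_2)$ be any pair of relatively prime positive integers whose associated Reeb vector field $\xi_\bfv$ lies in $\grk(\cald_{l,\bfw},J_\bfw)$. By Proposition \ref{defsas}, the quotient of $M^5_{g,l,\bfw}$ by the circle action generated by $\xi_\bfv$ is the orbifold pseudo-Hirzebruch surface $B_{\bfv,\bfw}$, a fiber bundle over $\grS_g$ with fiber the weighted projective space $\bbc\bbp(\bfv)$. The corresponding Sasakian structure is regular precisely when this quotient is a genuine smooth Kähler manifold, i.e.\ exactly when the fiber $\bbc\bbp(\bfv)$ has no orbifold singularities, which in turn forces $v_1=v_2=1$. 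Hence the regular ray in $\grk(\cald_{l,\bfw},J_\bfw)$ exists and is uniquely the one generated by $\bfv=(1,1)$.

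There is no substantial obstacle here: the only point worth flagging is that contact inequivalence really is detected by $c_1(\cald)$, which is a standard fact since $c_1(\cald)$ is computable from the conformal symplectic structure on $\cald$ and is therefore preserved by any contactomorphism. All other assertions reduce to direct arithmetic substitution or to the description of the base of the quasi-regular quotient already established.
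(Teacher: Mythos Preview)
Your proposal is correct and matches the paper's approach: the paper itself offers no separate proof environment for this proposition, simply stating that it follows from ``our construction together with Lemma \ref{c1},'' and noting at the start of Section \ref{regsas} that the uniqueness of the regular ray follows from Proposition \ref{defsas}. Your write-up makes explicit exactly the substitutions into Lemma \ref{c1} and Theorem \ref{diffeotype2}, and the observation that regularity forces the fiber $\bbc\bbp(\bfv)$ to have trivial orbifold structure, which are precisely the implicit steps the paper leaves to the reader.
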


Lemmas \ref{parlem} and \ref{evenHir} allow us to view the CR structure $(\cald_{l,\bfw},J_\bfw)$ as arising in two natural distinct but  equivalent ways. It either arises from a product complex structure involving weighted projective spaces, or it arises from a smooth pseudo-Hirzebruch surface. In the former viewpoint the CR structure is determined by the weights, whereas, in the latter it is determined by the degree of the line bundle $L$. This can be conveniently illustrated by the follow diagram:
\begin{equation}\label{prodHir}
\begin{matrix} && M_{g,l,\bfw} && \\
                        &&&&  \\
                        &  \swarrow  &&\searrow & \\
                       & \grS_g\times \bbc\bbp(\bfw) &&& B_{1,\bfw}=S_n.
\end{matrix}
\end{equation}
Of course, depending on the parity of $l|\bfw|$ the contact 5-manifold $M_{g,l,\bfw}$ is either $\grS_g\times S^3$ or $\grS_g\tilde{\times}S^3$. In the present formulation we began our construction using the southwest arrow, but we could have equally as well begun with the southeast arrow. In fact, this latter approach was the taken in \cite{BoTo11}. We can view the complex structure on either base as determining the transverse complex structure on $M_{g,l,\bfw}$. Note that when $\bfw=(1,1)$ we have $n=0$ and the two projections coincide.

\begin{example}\label{eleven} Here we give examples of a fixed contact structure $\cald_k$ on each of the 5-manifolds in terms of tables. The first table is on the trivial bundle $\grS_g\times S^3$ with contact structure $\cald_4$, so $c_1(\cald_4)=(2-2g-8)\grg.$ The second table is on the nontrivial bundle $\grS_g\tilde{\times} S^3$ again with contact structure $\cald_4$, so $c_1(\cald_4)=(2-2g-9)\grg.$  In both cases we must have $m<4$, so $m=0,1,2,3$. One can think of each $m$ as labeling a Sasaki cone in a 4-bouquet as described in Section \ref{bouqsec} below\footnote{Strictly speaking we have only proven that distinct values of $m$ give distinct conjugacy classes of maximal tori in the case of the trivial bundle $\grS_g\times S^3$. Thus, only in this case do we actually have what we defined as a 4-bouquet.}. Equivalently, $m$ labels the transverse complex structure.

\bigskip
\centerline{$\grS_g\times S^3$ with contact structure $\cald_4$}
\begin{center}
\begin{tabular}{| l || l | l |}
\hline
m & l & \bfw \\ \hline
0 & 4 & (1,1) \\ \hline
1 & 1 & (5,3) \\ \hline
2 & 2 & (3,1) \\ \hline
3 & 1 & (7,1) \\ \hline
\end{tabular}
\end{center}
\medskip

\bigskip
\centerline{$\grS_g\tilde{\times} S^3$ with contact structure $\cald_4$}
\begin{center}
\begin{tabular}{| l || l | l |}
\hline
m & l & \bfw \\ \hline
0 & 1 & (5,4) \\ \hline
1 & 3 & (2,1) \\ \hline
2 & 1 & (7,2) \\ \hline
3 & 1 & (8,1) \\ \hline
\end{tabular}
\end{center}

\end{example}

\subsection{Quasi-regular Sasakian Structures}\label{quasi-reg-ray}
We now consider the general case $\bfv=(v_1,v_2)$ where $v_1,v_2\in\bbz^+$ and we assume that $\gcd(v_1,v_2)=1$. The base space $B_{\bfv,\bfw}$ is now an orbifold pseudo-Hirzebruch surface as discussed in Section \ref{defss}. As a complex manifold $B_{\bfv,\bfw}$ is a smooth pseudo-Hirzebruch surface $S_n$ for some $n\in\bbz$, but there are branch divisors  making the orbifold structure crucial. It is thus convenient to represent $B_{\bfv,\bfw}$ as a log pair $(B_{1,\bfw'},\grD)$ for some weight vector $\bfw'$ and some branch divisor $\grD$. To do this we consider the map $\th_\bfv:M_g^3\times \bbc^2\backslash\{(0,0)\}\ra{1.6} M_g^3\times \bbc^2\backslash\{(0,0)\}$ defined by
\begin{equation}\label{th}
\th(x,u;z_1,z_2)=(x,u;z_1^{v_2},z_2^{v_1}).
\end{equation}
It is a $v_1v_2$-fold covering map. Consider the $S^1\times\bbc^*$ action $\cala_{\bfv,l,\bfw}(\grl,\grt)$ on $M^3_g\times \bbc^2$ defined by
\begin{equation}\label{act1}
\cala_{\bfv,l,\bfw}(\grl,\grt)(x,u;z_1,z_2)=(x,\grl u;\grt^{v_1}\grl^{-lw_1}z_1,\grt^{v_2}\grl^{-lw_2}z_2),
\end{equation}
where $\grl,\grt\in\bbc^*$ with $|\grl|=1$. Almost by definition we have 
$$B_{\bfv,\bfw}=\bigl(M_g^3\times \bbc^2\backslash\{(0,0)\}\bigr)/\cala_{\bfv,l,\bfw}(\grl,\grt).$$
A straightforward computation gives a commutative diagram:
\begin{equation}\label{actcomdia}
\begin{matrix}
M^3_g\times \bbc^2\backslash\{(0,0)\} &\fract{\cala_{\bfv,l,\bfw}(\grl,\grt)}{\ra{2.5}} & M^3_g\times \bbc^2\backslash\{(0,0)\} \\
\decdnar{\th_\bfv} && \decdnar{\th_\bfv} \\
M^3_g\times \bbc^2\backslash\{(0,0)\} & \fract{\cala_{1,l,\bfw'}(\grl,\grt^{v_1v_2})}{\ra{2.5}} & M^3_g\times \bbc^2\backslash\{(0,0)\},
\end{matrix}
\end{equation}
where $\bfw'=(v_2w_1,v_1w_2)$.
Now $\th_\bfv$ induces a fiber preserving biholomorphism $h_\bfv:B_{\bfv,\bfw}\ra{1.7} B_{1,\bfw'}$ given by $h(x,[z_1,z_2])=(x,[z_1^{v_2},z_2^{v_1}])$. As ruled surfaces $B_{1,\bfw'}=S_n$ where $n=l(w_1v_2-w_2v_1)$.
We can thus write $B_{\bfv,\bfw}$ as the log pair $(S_n,\grD_\bfv)$ where $\grD_\bfv$ is the branch divisor 
\begin{equation}\label{grD}
\grD_\bfv=(1-\frac{1}{v_1})E_n+ (1-\frac{1}{v_2})E'_{n}
\end{equation}
where $E'_{n}$ is the infinity section which satisfies $E'_{n}\cdot E'_{n}=-n$. We have arrived at:
\begin{lemma}\label{quasireglem}
The orbifold pseudo-Hirzebruch surface $B_{\bfv,\bfw}$ can be realized as the orbifold log pair $(S_n,\grD_\bfv)$ where $S_n$ is a pseudo-Hirzebruch surface of degree $n=l(w_1v_2-w_2v_1)$ and the branch divisor $\grD_\bfv$ is given by Equation (\ref{grD}).
\end{lemma}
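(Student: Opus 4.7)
The plan is to leverage the covering map $\th_\bfv$ and the commutative diagram \eqref{actcomdia} already set up just before the statement, and then to read off the branch divisor directly from the isotropy groups of the defining $S^1\times \bbc^*$-action $\cala_{\bfv,l,\bfw}$. The biholomorphism $h_\bfv\colon B_{\bfv,\bfw}\ra{1.6} B_{1,\bfw'}$ (with $\bfw'=(v_2w_1,v_1w_2)$) already identifies the underlying complex space of $B_{\bfv,\bfw}$ with the smooth pseudo-Hirzebruch surface $B_{1,\bfw'}$. So only two things remain: computing the degree $n$ and determining the orbifold (branch) divisor supported on the fixed loci of the fibrewise $\bbc^*$-action.

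First I would compute the degree $n$. Applying the $\bfv=(1,1)$ case of Lemma \ref{parlem} to the weight vector $\bfw'=(v_2w_1,v_1w_2)$ yields $n=l(v_2w_1-v_1w_2)=l(w_1v_2-w_2v_1)$, which is exactly the value claimed in the statement. This identifies $B_{1,\bfw'}$ as $S_n$, with the two distinguished sections $E_n=\bbp(\calo\oplus 0)$ and $E'_n=\bbp(0\oplus L_n)$ corresponding, under $h_\bfv$, to the loci $\{z_2=0\}$ and $\{z_1=0\}$ in $B_{\bfv,\bfw}$, respectively (up to the usual ordering convention on $\bfw$).

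Next I would compute the isotropy of $\cala_{\bfv,l,\bfw}(\grl,\grt)$ at a representative point of each of the three strata. For a generic point $(x,u;z_1,z_2)$ with $z_1z_2\neq 0$, the relation $\grl u=u$ forces $\grl=1$, whereupon $\grt^{v_1}z_1=z_1$ and $\grt^{v_2}z_2=z_2$ force $\grt^{v_1}=\grt^{v_2}=1$; since $\gcd(v_1,v_2)=1$ this gives $\grt=1$, so the action is free on this open dense set. For a point $(x,u;z_1,0)$ with $z_1\neq 0$, the same argument forces $\grl=1$ and $\grt^{v_1}=1$, giving isotropy $\bbz_{v_1}$; similarly the section $\{z_1=0\}$ carries isotropy $\bbz_{v_2}$. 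These are the only strata with nontrivial orbifold structure.

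Putting these computations together, the underlying complex variety $S_n$ carries a ramification divisor of multiplicity $v_1$ along $E_n$ and multiplicity $v_2$ along $E'_n$; written in the standard log-pair convention this is precisely
\[
\grD_\bfv=\Bigl(1-\frac{1}{v_1}\Bigr)E_n+\Bigl(1-\frac{1}{v_2}\Bigr)E'_n,
\]
yielding the stated realization $B_{\bfv,\bfw}=(S_n,\grD_\bfv)$. The only slightly delicate point is keeping track of which of the two sections $E_n,E'_n$ is identified with $\{z_1=0\}$ versus $\{z_2=0\}$ under $h_\bfv$; but this is a convention issue consistent with the weight ordering $w_1\geq w_2$ adopted earlier, and once fixed, no further analysis is required. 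The rest is a straightforward bookkeeping exercise in the quotient construction.
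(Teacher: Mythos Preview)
Your proposal is correct and follows essentially the same route as the paper. The paper's argument is the discussion immediately preceding the lemma (culminating in ``We have arrived at:''): it uses the covering map $\th_\bfv$ and the commutative diagram \eqref{actcomdia} to produce the biholomorphism $h_\bfv\colon B_{\bfv,\bfw}\to B_{1,\bfw'}$, identifies $B_{1,\bfw'}$ with $S_n$ for $n=l(w_1v_2-w_2v_1)$, and then asserts the branch divisor $\grD_\bfv$. Your isotropy computation for the action $\cala_{\bfv,l,\bfw}$ at the three strata simply makes explicit what the paper leaves implicit (equivalently, it records the ramification orders $v_2,v_1$ of $\th_\bfv$ along $\{z_1=0\},\{z_2=0\}$), so the two arguments are the same in substance.
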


Notice that when $\bfv=(1,1)$ we obtain the regular structures studied in Section \ref{regsas}, whereas, if $\bfv=\bfw$ we get the product structure $(S_0,\grD_\bfw)=\grS_g\times \bbc\bbp[\bfw]$ with which we started the join construction. Conversely, $n=0$ implies $\bfv$ is proportional to $\bfw$.

Note that the identity map $\BOne:B_{\bfv,\bfw}=(S_n,\grD_\bfv)\ra{1.8} (S_n,\emptyset)$ is a Galois covering map with trivial Galois group (cf. \cite{GhKo05}), where $(S_n,\emptyset)$ is just the ordinary pseudo-Hirzebruch surface $S_n$ with no branch divisors and its usual complex manifold atlas. 

First we compute the orbifold first Chern class. The computation is similar to that  for $B_{1,\bfw}$ with the exception that we replace $c_1(B_{1,\bfw})$ with $c_1^{orb}(B_{\bfv,\bfw})$. So we need the orbifold canonical divisor $K^{orb}$ of $B_{\bfv,\bfw}$. It is given by
\begin{equation}\label{KorbB}
K^{orb}=K_{S_n}+(1-\frac{1}{v_1})E_n+(1-\frac{1}{v_2})E_n'
\end{equation}
where $E_n\cdot E_n=n$ and $E_n'\cdot E_n'=-n$. Then, using
$$K_{S_n}=-2E_n-(2-2g-n)F,$$ 
we have
\begin{equation}\label{KorbB2}
-K^{orb}=(2-2g-n)F+(1+\frac{1}{v_1})E_n-(1-\frac{1}{v_2})E'_n.
\end{equation}
Now the divisors $F,E_n,E'_n$ and $K^{orb}$ are Poincar\'e dual to cohomology classes in $H^2(S_n,\bbq)$, but they pullback to integral classes on $M^5_{g,l,\bfw}$. We know from the proof of Lemma \ref{parlem} that $PD(F)$ pulls back to a generator $\grg$, i.e. $\pi^*PD(F)=\grg$, and we have $\pi^*PD(E_n)=-r_1v_1\grg$ and $\pi^*PD(E'_n)=-r_2v_2\grg$ for some integers $r_1,r_2$. Furthermore, the relation  $E'_n=E_n-nF$ implies $v_1r_1-v_2r_2=-n$. We also have $\pi^*c_1(-K^{orb})=c_1(\cald_{l,\bfw})=(2-2g-l|\bfw|)\grg$. Combining this with Equation (\ref{KorbB2}) gives the system
\begin{eqnarray}\label{msys}
v_1r_1-v_2r_2&=&-n \notag \\
r_1+r_2&=& l|\bfw|
\end{eqnarray}
which gives solutions $r_1=lw_2$ and $r_2=lw_1$. We are now ready for
\begin{lemma}\label{qregk}
With the K\"ahler class on $(B_{1,\bfw'},\grD_\bfv)$ given by Equation (\ref{symclass}), we have 
$$k = \begin{cases}
\frac{1}{2}l|\bfw'|, &\text{if $l|\bfw'|$ is even;}\\
\frac{1}{2}(l|\bfw'|-1) &\text{if $l|\bfw'|$ is odd,}
\end{cases}$$
where $\bfw'=(w_1v_2,w_2v_1)$.
\end{lemma}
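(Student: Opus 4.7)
The plan is to extract $k$ from the Boothby--Wang relation for the quasi-regular orbifold quotient, mirroring the argument in the proof of Lemma \ref{parlem}(3). The orbifold K\"ahler form $\gro$ on $B_{\bfv,\bfw}$ satisfies $\tilde\pi^*\gro=d\eta_{l,\bfw}$ as smooth forms on the manifold $M^5_{g,l,\bfw}$, where $\tilde\pi$ is the quasi-regular projection; consequently $\tilde\pi^*[\gro]=0$ in de Rham cohomology. Composing with the biholomorphism $h_\bfv:B_{\bfv,\bfw}\to B_{1,\bfw'}$ of Lemma \ref{quasireglem} and setting $\pi:=h_\bfv\circ\tilde\pi$, the K\"ahler class transferred to $(B_{1,\bfw'},\grD_\bfv)$ also satisfies $\pi^*[\gro]=0$.

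To convert this into a linear equation in $k$, expand the element $h$ of the basis of Equation (\ref{symclass}) as $h=PD(E_n)-m\,PD(F)$ (with $n=2m$ or $n=2m+1$ depending on parity). Combined with $[\gro_g]=PD(F)$ and the assumption $[\gro]=h+k[\gro_g]$, this gives $[\gro]=PD(E_n)+(k-m)\,PD(F)$. The pullback relations established immediately after system (\ref{msys}), namely $\pi^*PD(F)=\grg$ and $\pi^*PD(E_n)=-lw_2v_1\grg$, then reduce the vanishing $\pi^*[\gro]=0$ to the single scalar identity
\begin{equation*}
k \;=\; lw_2v_1+m.
\end{equation*}

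It remains to substitute $m$ according to parity. When $n=2m$, use $m=l(w_1v_2-w_2v_1)/2$ to obtain $k=\tfrac12 l(w_2v_1+w_1v_2)=\tfrac12 l|\bfw'|$, since $|\bfw'|=v_2w_1+v_1w_2$; when $n=2m+1$, the analogous substitution produces $k=\tfrac12(l|\bfw'|-1)$. These match the two cases in the statement. The only step requiring care is the vanishing $\tilde\pi^*[\gro]=0$: because the total space $M^5_{g,l,\bfw}$ is smooth and the Boothby--Wang identity $\tilde\pi^*\gro=d\eta_{l,\bfw}$ is one of differential forms on this smooth manifold, the orbifold structure $\grD_\bfv$ on the base plays no role in this step, so the same argument used in the regular case goes through verbatim.
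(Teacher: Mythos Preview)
Your argument is correct and follows essentially the same route as the paper's own proof: both use the Boothby--Wang relation $\pi^*[\gro]=0$, expand $[\gro]=PD(E_n)+(k-m)PD(F)$, apply the pullback identities $\pi^*PD(F)=\grg$ and $\pi^*PD(E_n)=-r_1v_1\grg$ with $r_1=lw_2$ from system (\ref{msys}), and then substitute the parity-dependent value of $m$. Your additional remark that the orbifold branch divisor $\grD_\bfv$ plays no role in the vanishing $\tilde\pi^*[\gro]=0$ (since it is an identity of smooth forms on the smooth total space) is a helpful clarification the paper leaves implicit.
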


\begin{proof}
From Equations (\ref{kahclass1}) and (\ref{kahclass2}) with $k_1=1$ and $k_2=k$, as follows from an argument similar to the proof of (2) in Lemma \ref{parlem}, the K\"ahler class $[\gro]=PD(E_n)+(k-m)PD(F)$ and pulls back to $0$. This gives 
$$0=\pi^*PD(E_n)+(k-m)\pi^*PD(F)=-r_1v_1\grg+(k-m)\grg,$$
where $2m=l(w_1v_2-w_2v_1)$ if $n$ is even and $2m=l(w_1v_2-w_2v_1)-1$ if $n$ is odd. Putting $r_1=lw_2$ into this equation gives the result.
\end{proof}

\section{Families of Sasakian Structures}\label{sasfam}

As with K\"ahlerian structures Sasakian structures occur in families. The analogies go much further, but there are notable differences. A K\"ahler structure has an underlying symplectic structure as well as an underlying complex structure. A Sasaki structure has an underlying contact structure as well as an underlying transverse complex structure, or somewhat equivalently an underlying strictly pseudoconvex CR structure. One can think of contact structures as odd dimensional versions of symplectic structures, and thus, Sasakian structures as an odd dimensional version of K\"ahlerian structures. However, unlike the symplectic case fixing a contact structure does not fix the contact 1-form. So a Sasakian structure requires a bit more information, namely, we need to fix a contact 1-form within the underlying contact structure. Even then there is no guarentee that the chosen contact form is associated to a Sasakian structure. In fact in most cases it is not. In order that a contact 1-form $\eta$ be the 1-form of a Sasakian structure it is necessary that its Reeb vector field $\xi$ be an infinitesimal automorphism of the underlying CR structure. This is equivalent to being a Killing vector field with respect to a compatible Riemannian metric $g$.

\subsection{The Underlying Contact Structure}
According to Theorem \ref{diffeotype2} the diffeomorphism type of our 5-manifold $M^5_{g,l,\bfw}$ is determined by the genus $g$ and the parity of $l|\bfw|$. With the diffeotype fixed, Lemma \ref{c1} says that the underlying contact structure is then determined by $l|\bfw|$ itself. However, we easily see from Lemma \ref{parlem} that the first Chern class of the contact bundle satisfies
\begin{equation}\label{c1k}
c_1(\cald)=\begin{cases} 
                  2-2g-2k, &\text{if $M^5_{g,l,\bfw}=\grS_g\times S^3$;} \\
                  2-2g-2k-1 &\text{if $M^5_{g,l,\bfw}=\grS_g\tilde{\times} S^3$.}
                  
   \end{cases}               
\end{equation}
Recall the two distinct, but completely equivalent, approaches to describing the underlying CR structure represented by diagram (\ref{prodHir}). Lemmas \ref{parlem} and \ref{evenHir} imply that instead of $l$ and $\bfw$ we can label our CR manifolds $M^5_{g,l,\bfw}$ by $n$ and $k$ and write $M^5_{g,k,n}$ in which case we see that $c_1(\cald)$ is independent of $n$ except for its parity. Equation (\ref{c1k}) also implies that the contact structures with different $k$ are inequivalent. Thus, we label our contact structures by $k$ and $n$ and write $\cald_{k,n}$ for $k\in \bbz^+$. We understand that for each $k\in \bbz^+$, $\cald_{k,n}$ is a contact structure on each of the 5-manifolds $\grS_g\times S^3$ when $n$ is even and on $\grS_g\tilde{\times} S^3$ when $n$ is odd. We suppress the dependence on $g$ when labeling the contact structure. In fact we have the stronger result:

\begin{proposition}\label{ineqcontact}
The contact structures $\cald_{k,n}$ and $\cald_{k',n'}$ on a fixed manifold $\grS_g\times S^3$ or $\grS_g\tilde{\times} S^3$ are contactomorphic if and only if $k'=k$ and $(-1)^{n'}=(-1)^n$.
\end{proposition}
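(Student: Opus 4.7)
The proof splits into two directions. For the necessity of $k=k'$ and $(-1)^{n'}=(-1)^n$ I would appeal directly to the first Chern class $c_1(\cald)\in H^2(M^5_{g,l,\bfw},\bbz)$, which is a contactomorphism invariant up to sign. Equation~\eqref{c1k} (a direct consequence of Lemma~\ref{c1} combined with Lemma~\ref{parlem}) expresses $c_1(\cald_{k,n})$ explicitly in terms of $k$ and the parity of $n$. Once the ambient 5-manifold is fixed, Theorem~\ref{diffeotype2} determines that parity, so the condition $(-1)^{n'}=(-1)^n$ is automatic; comparing the values of $c_1$ then pins down $k=k'$.

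For the sufficiency, suppose $n,n'$ share the same parity; one must show $\cald_{k,n}$ and $\cald_{k,n'}$ are contactomorphic. I would exploit the Boothby--Wang description from Proposition~\ref{regunique}: the manifold $M^5_{g,k,n}$ is realized as the principal $S^1$-bundle over the pseudo-Hirzebruch surface $S_n$ whose Euler class is the K\"ahler class $h+k[\gro_g]$ of Equation~\eqref{genkahclass}. Under any smooth identification of $S_n$ with the underlying 4-manifold $B$ (equal to $\grS_g\times S^2$ if $n$ is even and $\grS_g\tilde{\times}S^2$ if $n$ is odd), the class $h$ is the unique primitive integral class satisfying $h\cdot h=0,\ h\cdot F=1$ in the even case (respectively $h\cdot h=1,\ h\cdot F=1$ in the odd case), as a straightforward lattice computation in $H^2(B,\bbz)$ shows. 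Hence $h+k[\gro_g]$ represents a cohomology class depending only on $k$ and the parity of $n$, and the principal $S^1$-bundle $P\to B$ with this Euler class is a single bundle, common to all $M^5_{g,k,n}$ with fixed $k$ and fixed parity.

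It remains to check that the Boothby--Wang contact structure on $P$ is independent (up to contactomorphism) of the specific K\"ahler representative. For any two K\"ahler forms $\gro_n, \gro_{n'}$ on $B$ in the class $h+k[\gro_g]$ coming from $S_n, S_{n'}$, Moser's theorem produces a diffeomorphism $\psi\colon B\to B$ with $\psi^*\gro_{n'}=\gro_n$, provided they can be joined by a path of symplectic forms in this cohomology class. Such a path exists by taking as reference the canonical split symplectic form $\gro_{k_1,k_2}$ of Equation~\eqref{splitform} (together with its McDuff analogue~\cite{McD94} in the nontrivial case), to which each of $\gro_n, \gro_{n'}$ is symplectically isotopic. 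The resulting $\psi$ lifts to an $S^1$-equivariant bundle automorphism $\tilde\psi\colon P\to P$ carrying the Boothby--Wang contact form for $\gro_{n'}$ to a connection 1-form with curvature $\pi^*\gro_n$; since any two connection 1-forms on $P$ with the same curvature are joined by the affine path through contact forms of constant curvature $\pi^*\gro_n$, Gray's stability theorem yields a contactomorphism of the corresponding contact structures on $P$.

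The main technical obstacle is the symplectic isotopy between $\gro_n$ (pushed forward to $B$) and the standard form $\gro_{k_1,k_2}$, since a priori convex combinations of distinct K\"ahler forms need not be pointwise positive. I would handle this by appealing to the deformation equivalence of ruled surfaces established by Seiler~\cite{Sei92}, which provides a smooth family of complex structures on $B$ interpolating between that of $S_n$ and the standard product (or twisted product) structure, inducing a path of K\"ahler forms in the prescribed class and completing the Moser step.
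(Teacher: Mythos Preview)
Your overall architecture matches the paper's: use the regular Boothby--Wang picture from Proposition~\ref{regunique}, reduce to showing that the two base symplectic forms on $B=\grS_g\times S^2$ (or $\grS_g\tilde{\times} S^2$) in the common class $h+k[\gro_g]$ are symplectomorphic, and then lift the symplectomorphism to a contactomorphism of the circle bundle.

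The difference is in how you produce the base symplectomorphism. The paper invokes the Lalonde--McDuff classification \cite{LaMcD96}: on a ruled $4$-manifold any two cohomologous symplectic forms are diffeomorphic. This is a single clean citation that closes the argument immediately. You instead try to run Moser by constructing an explicit path of K\"ahler forms in the fixed class via Seiler's deformation equivalence of ruled surfaces. This step has a gap. Seiler \cite{Sei92} gives a smooth family of \emph{complex} structures on $B$ interpolating between $S_n$ and $S_{n'}$, but it does not follow that the class $h+k[\gro_g]$ is K\"ahler for every member of the family. Indeed, Lemma~\ref{Kahclass} shows that the K\"ahler cone genuinely depends on the complex type: for a non-stable indecomposable structure (case~(1)(b)) the cone is cut down by the condition $k>\tilde{m}$, and nothing in Seiler's construction prevents the deformation path from passing through such structures with $\tilde{m}\geq k$. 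So your sentence ``inducing a path of K\"ahler forms in the prescribed class'' is exactly the unproved assertion, and your earlier phrase ``to which each of $\gro_n,\gro_{n'}$ is symplectically isotopic'' already assumes the conclusion.

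The fix is simply to replace the Moser/Seiler paragraph by the Lalonde--McDuff theorem; everything else in your argument (the $c_1$ computation for necessity, the identification of the Euler class, the lift of the symplectomorphism and Gray stability on the total space) is fine and is what the paper does as well.
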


\begin{proof}
It remains to prove the `if' part. So we assume $k'=k$. Given the two CR structures $(\cald_{k,n'},J_{n'})$ and $(\cald_{k,n},J_n)$ we choose the unique regular Reeb vector guarenteed by Proposition \ref{regunique}. In each case the quotient manifold $B_g$ by the $S^1$-action generated by the Reeb field is $\grS_g\times S^3$ if $n$ is even and $\grS_g\tilde{\times} S^3$ if $n$ is odd. Moreover, in either case the symplectic class in $H^2(B_g,\bbz)$ is given by Equation (\ref{symclass}) which is independent of $n$ and $n'$. So the corresponding  symplectic forms $\gro$ and $\gro'$ are cohomologous. By a theorem of Lalonde and McDuff \cite{LaMcD96} these two forms are symplectomorphic. That is there is a diffeomorphism $\phi:B_g\ra{1.6} B_g$ such that $\phi^*\gro'=\gro$. By the Boothby-Wang construction \cite{BoWa} there are isomorphic principal circle bundles $\pi:M\ra{1.6} B_g$ and $\pi':M\ra{1.6} B_g$ over each symplectic manifold with connection 1-forms $\eta$ and $\eta'$, respectively, such that $d\eta=\pi^*\gro$ and $d\eta'=\pi'^*\gro'$. Moreover, the connection forms are also contact 1-forms and $\phi$ extends to a fiber preserving  diffeomorphism $\tphi:M\ra{1.6} M$ such that $\tphi^*\eta'=\eta$. So the contact structures are contactomorphic. 
\end{proof}

Proposition \ref{ineqcontact} implies that the isomorphism class of contact structure is independent of $n$ up to its parity which determines which of the two $S^3$-bundles occur. We shall henceforth denote our contact structures by $\cald_k$ making clear when necessary which manifold is involved.

\subsection{Families of Sasakian Structures Associated to $\cald_{k}$}\label{famsas}
Families of Sasakian structures arise from deformations in various ways. First we fix a Sasakian structure $\cals=(\xi,\eta,\Phi,g)$. Then one can deform the contact form $\eta$ within the contact structure $\cald$ by sending $\eta\mapsto f\eta$ with $f>0$ everywhere. This fixes the CR structure, $(\cald,J)$, and under certain rather stringent conditions a Sasakian structure will remain Sasakian. This gives rise to Sasaki cones and is called a {\it deformation of type I} in \cite{BG05}. Next, one can deform the contact structure $\cald$ by sending $\eta\mapsto \eta +\grz$ where $\grz$ is basic 1-form. This keeps the characteristic foliation $\calf_\xi$ fixed while deforming the transverse K\"ahler structure $d\eta$ within its basic cohomology class. Gray's Theorem says that any such deformations give equivalent ({\it isotopic}) contact structures.  This is called a {\it deformation of type II} in \cite{BG05}. Finally, one can deform the transverse complex structure by sending $J\mapsto J_t$ where $J=\Phi |_\cald$ while keeping the contact structure $\cald$ fixed. This last type gives rise to the jumping phenomenon known in ruled surfaces \cite{MoKo06,Suw69,Sei92}. It follows from Proposition \ref{regunique} that  all the CR structures described in Section \ref{s4} are induced by a regular Boothby-Wang construction over a ruled surface of type (2) with $L$ trivial and type (3). However, we can choose the vector bundle $E$, hence, the line bundle $L$ to be given by a reducible representation $\grr$ of $\pi_1(\grS_g)$. Using Proposition \ref{hamred} gives all complex structures of type (2)  for the case $n=0$, and from the discussion of Section \ref{s2.3} we similarly obtain all complex structures of type (3). All of these induced CR structures have 2-dimensional Sasaki cones. In addition there are complex structures of case (1) which induce CR structures with a 1-dimensional Sasaki cone. Again by Proposition \ref{hamred} the CR structures induced by case (1)(a) are determined by the complex structure on $\grS_g$ together with an irreducible representation of $\pi_1(\grS_g)$. Then using Lemma \ref{Kahclass}  we have arrived at

\begin{lemma}\label{J}
Consider the regular contact manifolds $(\grS_g\times S^3,\cald_k)$ and $(\grS_g\tilde{\times} S^3,\cald_k)$ where $k$ is a positive integer which are the total spaces of principal $S^1$-bundles over the symplectic manifolds $(\grS_g\times S^2,\gro_{1,k})$ and $(\grS_g\tilde{\times} S^2,\gro_{1,k})$, respectively. Then with the exception of case (1)(b) the induced complex structures $J$ in $(\cald_k,J)$ are determined by the triple $(\grt,\grr,m)$ where $\grt\in\gM_g$, $\grr$ is a representation of $\pi_1(\grS_g)$ which is irreducible for case (1)(a), reducible for case (2), and $m$ is a nonnegative integer.
\end{lemma}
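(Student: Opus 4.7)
The plan is to use the regular Boothby-Wang correspondence to translate the problem into the classification of $\gro_{1,k}$-compatible complex structures on the base pseudo-Hirzebruch surface, after which the classification of ruled surfaces of genus $g\geq 1$ recalled in Section \ref{complexsubsec} does most of the work.

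First I would observe that, by Proposition \ref{regunique}, the contact structure $\cald_k$ is the one produced by Boothby-Wang over the symplectic $4$-manifold $(B_g,\gro_{1,k})$ with $[\gro_{1,k}]=\alpha_{1,k}$ as in Equation \eqref{symclass}, where $B_g$ is $\grS_g\times S^2$ or $\grS_g\tilde{\times}S^2$ according to the parity of the relevant integer. A CR structure $(\cald_k,J)$ of Sasaki type pushes down (modulo the Reeb direction) to an $\gro_{1,k}$-compatible complex structure on $B_g$, and conversely every such complex structure on the base lifts; so it suffices to classify those compatible complex structures on $B_g$.

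Next I would invoke the structure of $(B_g,J)$ as a ruled surface $\bbp(E)\to (\grS_g,\grt)$, which already singles out $\grt\in\gM_g$. The rank-two bundle $E$ then falls into one of the four types of Section \ref{complexsubsec}. Excluding case (1)(b) by hypothesis, three cases remain. In case (1)(a), $E$ is stable, so by the Narasimhan-Seshadri theorem it corresponds to an irreducible projective unitary representation $\grr$ of $\pi_1(\grS_g)$; after tensoring with a suitable line bundle one may assume $\deg E\in\{0,1\}$, so no further integer parameter is needed and we set $m=0$. In case (2), $E=\calo\oplus L$ with $\deg L=0$, and as recalled in Section \ref{repr} the choice of $L\in\mathrm{Pic}^0(\grS_g)$ corresponds to a reducible projective unitary representation $\grr$; again $m=0$. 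In case (3), $E=\calo\oplus L_n$ with $\deg L_n=n\neq 0$, and we write $n=2m$ or $n=2m+1$ according to parity; by Section \ref{s2.3} the set $\gL_n$ of line bundles of degree $n$ is a principal homogeneous space for $\mathrm{Pic}^0(\grS_g)$, hence once $m$ is fixed the choice of $L_n$ is again encoded by a reducible representation $\grr$.

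Finally I would verify K\"ahler class admissibility using Lemma \ref{Kahclass}: in cases (1)(a) and (2) the class $\alpha_{1,k}$ is a K\"ahler class automatically for $k>0$, while case (3) requires $m<k$, which recovers the familiar range $m=0,\ldots,k-1$ appearing throughout the paper. I do not expect a genuine obstacle here; the main work is bookkeeping to match each of the three surviving cases with the corresponding piece of the triple $(\grt,\grr,m)$. The only subtle point is that the exact sequence \eqref{Picexact} does not split canonically, which is precisely why in case (3) the pair $(m,\grr)$, rather than a single holomorphic line bundle, is the most natural way to record the data.
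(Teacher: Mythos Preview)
Your proposal is correct and follows essentially the same route as the paper: the paragraph immediately preceding the lemma is the paper's proof, and it too reduces to the Boothby--Wang correspondence via Proposition~\ref{regunique}, invokes Proposition~\ref{hamred} (Narasimhan--Seshadri) for cases (1)(a) and (2), the discussion of Section~\ref{s2.3} for case (3), and Lemma~\ref{Kahclass} for the K\"ahler range $m<k$. Your write-up is somewhat more explicit about the bookkeeping, but there is no substantive difference in strategy.
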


Henceforth, we shall label our transverse complex structures as $J_{\grt,\grr,m}$.

\subsection{Bouquets of Sasaki Cones}\label{bouqsec}
The concept of bouquets of Sasakian structures was introduced in \cite{Boy10a}. They consist of a discrete (usually finite) number of Sasaki cones that are associated with the same isomorphism class of contact structure. These Sasaki cones are associated to (almost) CR structures whose maximal tori in $\gC\gR(\cald,J)$ belong to distinct conjugacy classes of tori in the contactomorphism group $\gC\go\gn(\cald)$. But typically a Sasaki cone may be associated to more than one complex ruled surface, hence, to more than one CR structure with the same underlying contact structure. For example, in the cases treated here, varying the complex structure $\grt$ on $\grS_g$ does not change the Sasaki cone; however, when $m=0$ varying the representation $\grr$ from reducible to irreducible or vice-versa does. 

We wish to describe how  the Sasaki cones vary with the transverse complex structure $J$. We have
\begin{lemma}\label{sasconecomp}
Let $\cals_{\grt,\grr,m}$ be a Sasakian structure on $(\grS_g\times S^3,\cald_k)$ or $(\grS_g\tilde{\times} S^3,\cald_k)$ with underlying CR structure $(\cald_k,J_{\grt,\grr,m})$. Then 
\begin{enumerate}
\item If $m=0$ and $\grr\in \gR(\grS_g)^{irred}$, then $\dim\grk(\cald_k,J_{\grt,\grr,0})=1$ and $\grk(\cald_k,J_{\grt,\grr,0})=\grk(\cald_k,J_{\grt',\grr',0})$ for any $\grt,\grt'\in\gM_g$ and $\grr,\grr'\in \gR(\grS_g)^{irred}$.
\item If $0\leq m<k$ and $\grr\in \gR(\grS_g)^{red}$, then $\dim\grk(\cald_k,J_{\grt,\grr,m})=2$ and $\grk(\cald_k,J_{\grt,\grr,m})=\grk(\cald_k,J_{\grt',\grr',m})$ for any $\grt,\grt'\in\gM_g$ and $\grr,\grr'\in \gR(\grS_g)^{red}$.
\item The Sasaki cone for any complex structure of type (1)(b) coincides with the Sasaki cone for any complex structure of type (1)(a), that is, complex structures of item {\rm (1)} above.
\end{enumerate}
\end{lemma}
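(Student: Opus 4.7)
The plan is to read the dimension of each Sasaki cone directly off the transverse holomorphic symmetry of the base via the regular Boothby--Wang picture of Proposition \ref{regunique}, and then to identify the cones themselves as literal subsets of a common Lie algebra of contact infinitesimal automorphisms.

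First I would fix the regular Reeb vector field $\xi$ in $\cald_k$ guaranteed by Proposition \ref{regunique}; this ray lies in every Sasaki cone under consideration, since changing $J_{\grt,\grr,m}$ while holding the characteristic foliation $\calf_\xi$ fixed is a transverse (type III) deformation that does not disturb $\xi$. The dimension of $\grk(\cald_k, J_{\grt,\grr,m})$ equals the rank of a maximal torus in $\gC\gR(\cald_k, J_{\grt,\grr,m})$, and such a torus is obtained by taking $\xi$ together with the lift of a maximal torus of holomorphic Hamiltonian isometries of the K\"ahler quotient $(B_g, \gro_{1,k})$, where $B_g$ is the associated pseudo-Hirzebruch surface or complex ruled surface determined by $(\grt,\grr,m)$.

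For item (1), when $m=0$ and $\grr$ is irreducible the quotient is of type (1)(a), so by Proposition \ref{hamred} the base admits no holomorphic Hamiltonian circle action. Hence the maximal torus of $\gC\gR(\cald_k, J_{\grt,\grr,0})$ is one-dimensional and consists of the Reeb circle of $\xi$ alone, giving $\dim\grk(\cald_k, J_{\grt,\grr,0}) = 1$. Independence from $\grt$ and $\grr$ is then automatic: all such cones are literally the ray $\bbr^+\xi$ inside the same Lie algebra of contact vector fields on $(\cald_k,\eta)$. For item (3) the same argument applies verbatim to type (1)(b), because Lemma 1 of \cite{ACGT11}, quoted in Section \ref{exKmet}, rules out any Hamiltonian vector field whenever the underlying rank-two bundle is indecomposable; so again the cone reduces to $\bbr^+\xi$ and therefore coincides with the cone for any case (1)(a) structure.

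For item (2), with $\grr$ reducible the base is of type (2) or (3), and the constructions of Sections \ref{repr} and \ref{s2.3} yield a distinguished holomorphic Hamiltonian circle action on $B_g$. Pulling this action back by the Boothby--Wang projection produces a strictly contact circle action on $(\grS_g \times S^3, \cald_k)$ or $(\grS_g \widetilde\times S^3, \cald_k)$ which commutes with the Reeb flow of $\xi$; together they span a $2$-torus, hence $\dim\grk(\cald_k, J_{\grt,\grr,m}) = 2$. To see that the cone is the same for every $(\grt,\grr)$ in this class, I would identify the generator of the lifted circle action with the vector field $H_1$ appearing in Section \ref{joinsec}: in the coordinates of Lemma \ref{parlem} and Lemma \ref{evenHir} the triple $(k,n,m)$ determines $(l,\bfw)$ uniquely, and therefore determines the 2-dimensional abelian subalgebra $\gt_2(\bfw)\subset \ga\gu\gt(\cals_{l,\bfw})$ and the cone $\gt_2^+(\bfw)$ defined in \eqref{sascone}; this subalgebra and its positivity chamber are intrinsic to the contact form $\eta_{l,\bfw}$ and independent of the transverse complex structure $J_{\grt,\grr,m}$ that dresses it. The main technical point, and the step I expect to be the obstacle, is verifying that the lifted Hamiltonian $S^1$ does not depend on which reducible $\grr$ one chose: different reductions produce a priori different circles inside the automorphism group of $B_g$, and I would resolve this either by noting that all circle subgroups coming from reducible $\grr$ are conjugate in $\gH\ga\gm(B_g,\gro_{1,k})$ (so up to the Weyl action on $\gt_2(\bfw)$ they produce the same reduced Sasaki cone $\grk(\cald_k,J)$), or by using that the lifted vector field, being a contact symmetry of $(\cald_k,\eta_{l,\bfw})$, must lie in the fixed finite-dimensional algebra $\gt_2(\bfw)$ already built into the join construction, which leaves no room for genuine variation.
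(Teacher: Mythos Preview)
Your proposal is correct and is essentially the argument the paper has in mind; the paper's own proof reads in its entirety ``The result follows in a straightforward manner from our previous results and the Boothby-Wang construction. We leave the details to the reader.'' You have supplied those details along the intended lines: use the unique regular Reeb ray of Proposition \ref{regunique}, read the rank of the maximal torus off the presence or absence of a Hamiltonian circle on the base via Proposition \ref{hamred} and Lemma 1 of \cite{ACGT11}, and identify the resulting cone with $\gt_2^+(\bfw)$ (or with $\bbr^+\xi$) via the join--Hirzebruch dictionary of Lemmas \ref{parlem} and \ref{evenHir}.

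The one place you hesitate---whether the lifted Hamiltonian circle depends on the reducible representation $\grr$---is actually easier than you fear. For fixed $m$ (hence fixed degree $n$), varying $\grr$ changes only the holomorphic structure on the line bundle $L_n$, not its underlying smooth bundle; the circle action $\tilde{\cala}_n(\grl)(w,[u,v])=(w,[u,\grl v])$ of Section \ref{hamvf} is a purely fiberwise rotation and is literally the same smooth (indeed symplectic) circle action on $B_g$ for every choice of $(\grt,\grr)$. Hence its lift to $M^5_{g,l,\bfw}$ is the same contact vector field, and together with the fixed regular Reeb field $\xi$ it spans the same $\gt_2(\bfw)$ regardless of $(\grt,\grr)$. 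So no conjugacy or Weyl-group argument is needed.
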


\begin{proof}
The result follows in a straightforward manner from our previous results and the Boothby-Wang construction. We leave the details to the reader. 
\end{proof}

\begin{remark}\label{sameReeb}
Fix the 5-manifold to be either $\grS_g\times S^3$ or $\grS_g\tilde{\times}S^3$. Then we notice from the proof of Lemma \ref{sasconecomp} that all the Sasaki cones contain the same Reeb vector field corresponding to the principal $S^1$-bundle over either $\grS_g\times S^2$ or $\grS_g\tilde{\times} S^2$ defined by the cohomology class $\gra_{1,k}=[\gro_{1,k}]$. The differences in the cones come from changing the complex structure on the base. For example, some complex structures admit a Hamiltonian Killing vector field and some don't.
\end{remark}

We now construct our Sasaki bouquets. Recall \cite{Boy10a} the map $\gQ$ that associates to any transverse almost complex structure $J$ that is compatible with the contact structure $\cald$, a conjugacy class of tori in $\gC\go\gn(\cald)$, namely the unique conjugacy class of maximal tori in $\gC\gR(\cald,J)\subset \gC\go\gn(\cald)$. Then two compatible transverse almost complex structures $J,J'$ are {\it T-equivalent} if $\gQ(J)=\gQ(J')$. A {\it Sasaki bouquet} is defined by
\begin{equation}\label{sasbouq}
\gB_{|\cala|}(\cald)=\bigcup_{\gra\in\cala}\grk(\cald,J_\gra)
\end{equation}
where the union is taken over one representative of each $T$-equivalence class in a preassigned subset $\cala$ of $T$-equivalence classes of transverse (almost) complex structures. Here $|\cala|$ denotes the cardenality of $\cala$.

We can now build our Sasaki bouquets. Here we treat only the 5-manifold $\grS_g\times S^3$ since to distinguish $T$-equivalence classes we need to use Theorem \ref{conjmaxtori} and we do not at this stage have this theorem for the non-trivial bundle $\grS_g\tilde{\times} S^3$. We do, however, believe that such a result holds. We consider the indexing set $\cala$ to be the set $\{0,\ldots,k-1\}\sqcup \{0'\}$ where $0'$ is a disjoint copy of zero. 

\begin{theorem}\label{sasbouquetthm}
For $g\geq 2$ the contact manifold $(\grS_g\times S^3,\cald_k)$ admits a $k+1$-bouquet 
$$\gB_{k+1}(\cald_k)=\bigcup_{m=0}^{k-1}\grk(\cald_k,J_{\grt,\grr,m})\bigcup \grk(\cald_k,J_{\grt',\grr',0'}),$$
where $\grt,\grt'$ are any complex structures on $\grS_g$, $\grr$ is any reducible representation of $\pi_1(\grS_g)$ and $\grr'$ is any irreducible representation of $\pi_1(\grS_g)$. Moreover, all of the Sasaki cones in $\gB_{k+1}(\cald_k)$ intersect in the ray of the regular Reeb vector field of the Boothby-Wang fibration 
$$(\grS_g\times S^3,\eta_k)\ra{1.6} (\grS_g\times S^2,\gro_{1,k}).$$
\end{theorem}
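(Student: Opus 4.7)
The plan is to exhibit $k+1$ transverse complex structures whose $T$-equivalence classes are pairwise distinct, and then to verify that all of the resulting Sasaki cones share a common ray, namely the regular Reeb vector field of the Boothby--Wang fibration over $(\grS_g\times S^2, \gro_{1,k})$.

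First I would list the representatives. For each $m=0,\ldots,k-1$, choose a pseudo-Hirzebruch complex structure $J_{\grt,\grr,m}$ coming from a reducible representation $\grr$ and $n=2m$. By Lemma~\ref{sasconecomp}(2), the Sasaki cone $\grk(\cald_k,J_{\grt,\grr,m})$ has dimension $2$ and is independent of the choice of $\grt\in\gM_g$ and reducible $\grr$. Separately, choose an irreducible representation $\grr'$ and the corresponding case~(1)(a) complex structure $J_{\grt',\grr',0'}$; by Lemma~\ref{sasconecomp}(1) this gives a $1$-dimensional Sasaki cone independent of the choice of $\grt'$ and $\grr'\in\gR(\grS_g)^{irred}$. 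This supplies the union displayed in the statement.

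Next I would verify that the $k+1$ cones represent distinct $T$-equivalence classes, i.e.\ that the maximal tori associated with the corresponding CR structures lie in pairwise non-conjugate conjugacy classes of $\gC\go\gn(\cald_k)$. The $(k+1)$-st cone is immediate: its maximal torus is one-dimensional, hence cannot be conjugate to any of the two-dimensional tori attached to the cones in the first $k$ entries. For the remaining $k$ cones, I would pass to the Boothby--Wang quotient. Each two-dimensional torus $T^2_m\subset \gC\go\gn(\cald_k)$ contains the regular Reeb $S^1$ acting along the fibers of $(\grS_g\times S^3,\eta_k)\to(\grS_g\times S^2,\gro_{1,k})$, and its quotient is a Hamiltonian circle $\tilde{\cala}_{2m}(\grl)\subset \gH\ga\gm(\grS_g\times S^2,\gro_{1,k})$. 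Any contactomorphism conjugating $T^2_m$ to $T^2_{m'}$ must preserve the regular Reeb ray (as it lies in the centre/characteristic direction of both tori) and so descends to a symplectomorphism of $(\grS_g\times S^2,\gro_{1,k})$ conjugating the two Hamiltonian circles; this descent uses the standard Boothby--Wang correspondence already invoked in the proof of Proposition~\ref{ineqcontact}. Theorem~\ref{conjmaxtori} then forces $m=m'$. Conversely, any Hamiltonian isotopy on the base lifts to a contactomorphism of $(\grS_g\times S^3,\eta_k)$, so non-conjugate $\tilde{\cala}_{2m}$ give non-conjugate $T^2_m$. This yields the required $k+1$ distinct $T$-equivalence classes and hence the bouquet.

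Finally, the common intersection of all the cones follows from Proposition~\ref{regunique} together with Remark~\ref{sameReeb}: every one of the CR structures in question is induced, via Boothby--Wang over some case-(1)(a), case-(2), or case-(3) complex ruled surface, by the same principal $S^1$-bundle determined by $[\gro_{1,k}]=\gra_{1,k}$, and the fibre-generating Reeb vector field of that bundle lies in $\grk(\cald_k,J_{\grt,\grr,m})$ for each $m$ and also in $\grk(\cald_k,J_{\grt',\grr',0'})$. The principal obstacle is the descent argument in the previous paragraph: ensuring that conjugation in $\gC\go\gn(\cald_k)$ indeed induces conjugation in $\gH\ga\gm(\grS_g\times S^2,\gro_{1,k})$ so that Theorem~\ref{conjmaxtori} can be applied; once this is set up carefully, the rest is a bookkeeping exercise using the earlier lemmas.
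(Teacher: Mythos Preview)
Your overall architecture matches the paper's proof: invoke Lemma~\ref{sasconecomp} for the dimensions, use Theorem~\ref{conjmaxtori} to separate the $k$ two-dimensional cones, distinguish the one-dimensional cone by its dimension, and read off the common regular ray from the Boothby--Wang picture. Two points deserve comment.

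First, your descent argument is not the route the paper takes, and the justification you offer is not quite right. You write that a contactomorphism conjugating $T^2_m$ to $T^2_{m'}$ must preserve the regular Reeb ray ``as it lies in the centre/characteristic direction of both tori''. But the tori are abelian, so every direction is central; this does not single out the regular ray. The honest reason the regular ray is preserved is Proposition~\ref{regunique}: it is the \emph{unique} ray in each cone whose flow is free (all orbits closed with equal period), and freeness is a diffeomorphism invariant. With that in hand one can indeed push the contactomorphism down, but one still has to argue that the induced base map is genuinely symplectic rather than conformally symplectic (a volume argument on the compact base handles this), and then that it conjugates the Hamiltonian circles inside $\gH\ga\gm$ rather than merely inside ${\rm Symp}$. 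The paper sidesteps all of this by going in the lifting direction and citing \cite{Ler02b,Boy10a}: those references show directly that non-conjugate maximal tori in $\gH\ga\gm(\grS_g\times S^2,\gro_{1,k})$ lift to non-conjugate maximal tori in $\gC\go\gn(\grS_g\times S^3,\cald_k)$. You correctly flag this step as the principal obstacle; the fix is either to invoke those lifting results or to repair the descent as above.

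Second, you silently assume the one-dimensional cone exists. The paper makes this explicit: for $g\geq 2$ the moduli space of stable rank-two degree-zero bundles over $\grS_g$ is nonempty, equivalently there exist irreducible unitary representations of $\pi_1(\grS_g)$ into $PSU(2)$ (cf.\ \cite{NaSe65,New02}). This is exactly why the hypothesis $g\geq 2$ appears in the statement, and it should be said.
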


\begin{proof}
By Lemma \ref{sasconecomp} we know that for $m=0,\ldots,k-1$ the dimension of the Sasaki cones $\grk(\cald_k,J_{\grt,\grr,m})$ is two when $\grr$ is reducible, and the dimension of $\grk(\cald_k,J_{\grt',\grr',0})$ is one when $\grr'$ is irreducible. Furthermore, from the Boothby-Wang construction they intersect in the ray of the Reeb vector field of $\eta_k$. Now suppose that $0\leq m'< m<k$, and let $\tilde{\cala}_m$ and $\tilde{\cala}_{m'}$ denote the corresponding Hamiltonian circle actions as described in the beginning of Section \ref{hamvf}. Then by Theorem \ref{conjmaxtori} these correspond to non-conjugate maximal tori in $\gH\ga\gm(\grS_g\times S^2,\gro_{1,k})$. By \cite{Ler02b,Boy10a} these Hamiltonian circle groups lift to $\grS_g\times S^3$ giving maximal tori of dimension two in the contactomorphism group $\gC\go\gn(M^5_{g,k},\eta_{k})$ that are non-conjugate in the larger group $\gC\go\gn(M^5_{g,k},\cald_{k})$. It follows that $\gQ(J_m)\neq \gQ(J_{m'})$ giving the first union in the bouquet. 

It is well known (cf. \cite{New02}) that when $g\geq 2$ the moduli space of stable vector bundles of rank 2 and degree 0 over $\grS_g$ is non-empty, and is realized by irreducible unitary representations of $\pi_1(\grS_g)$ \cite{NaSe65}. This gives the 1-dimensional Sasaki cone of the bouquet. Moreover, we clearly have $\gQ(J_m)\neq \gQ(J_{0'})$ since the 
latter has a 1-dimensional Sasaki cone and the former a 2-dimensional Sasaki cone. 
\end{proof}

\begin{remark}\label{A_0rem}
Of course, we can construct a $k+1$-bouquet in the $g=1$ case by adding the Sasakian structure induced by the complex structure $A_0$ (see \cite{Suw69,BoTo11} for the definition of the notation) to the $k$ 2-dimensional Sasaki cones. However, this complex structure is unstable and is not related to unitary representations of $\pi_1(\grS_g)$.
\end{remark}

We also have the following two results:

\begin{proposition}\label{contconj}
The contactomorphism group $\gC\go\gn(\grS_g\times S^3,\cald_{k})$ contains at least $k$ conjugacy classes of maximal tori of dimension $2$ of Reeb type, and exactly $k$ conjugacy classes of maximal tori of dimension $2$ of Reeb type that intersect in the ray of the Reeb vector field $\xi_{k}$.
\end{proposition}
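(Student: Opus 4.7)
I would split the proposition into its two assertions and handle them using the bouquet from Theorem \ref{sasbouquetthm} together with Theorem \ref{conjmaxtori} and a Lerman-type lifting argument.

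For the \emph{at least $k$} part, the plan is to exhibit $k$ explicit maximal $2$-tori and then show they are pairwise non-conjugate in $\gC\go\gn(\grS_g\times S^3,\cald_k)$. For each $m=0,1,\ldots,k-1$, Theorem \ref{sasbouquetthm} supplies a $2$-dimensional Sasaki cone $\grk(\cald_k,J_{\grt,\grr,m})$ with $\grr$ reducible; the associated $2$-torus $T_m\subset \gC\gR(\cald_k,J_{\grt,\grr,m})\subset \gC\go\gn(\cald_k)$ is maximal in $\gC\go\gn(\cald_k)$, of Reeb type, and contains the regular Reeb vector field $\xi_k$ coming from Proposition \ref{regunique}. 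If $\grf\in\gC\go\gn(\cald_k)$ satisfied $\grf T_m\grf^{-1}=T_{m'}$, then after possibly composing with an element of $T_{m'}$ we may arrange that $\grf$ maps the $S^1(\xi_k)$ factor of $T_m$ to the $S^1(\xi_k)$ factor of $T_{m'}$, and then $\grf$ descends via the Boothby--Wang quotient along $S^1(\xi_k)$ to a symplectomorphism of $(\grS_g\times S^2,\gro_{1,k})$ conjugating the Hamiltonian circle $\tilde{\cala}_{2m}$ to $\tilde{\cala}_{2m'}$; this contradicts Theorem \ref{conjmaxtori} unless $m=m'$.

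For the \emph{exactly $k$} part, restricted to $2$-tori meeting in the ray of $\xi_k$, I would prove the converse bound. Let $T$ be any maximal $2$-dimensional torus of Reeb type in $\gC\go\gn(\cald_k)$ with $\xi_k\in\Lie(T)$. Because $T$ is abelian and $\xi_k$ generates a circle subgroup whose orbits are the Boothby--Wang fibers, $T$ splits as $S^1(\xi_k)\times S^1(X)$ and the residual circle descends under $\pi:\grS_g\times S^3\to\grS_g\times S^2$ to a Hamiltonian circle action on $(\grS_g\times S^2,\gro_{1,k})$; this descended circle is maximal in $\gH\ga\gm(\grS_g\times S^2,\gro_{1,k})$, for otherwise its Lerman lift through the connection $\eta_k$ would be a torus in $\gC\go\gn(\cald_k)$ strictly containing $T$. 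By Theorem \ref{conjmaxtori}, this maximal circle is conjugate in $\gH\ga\gm$ to exactly one of the $\tilde{\cala}_{2m}$, and lifting that conjugating element through the Boothby--Wang construction produces an element of $\gC\go\gn(\cald_k)$ that preserves $\xi_k$ and conjugates $T$ to $T_m$.

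Combining the two directions yields precisely $k$ conjugacy classes among maximal Reeb-type $2$-tori intersecting in the ray of $\xi_k$, and at least $k$ overall. The main obstacle is the descent step in both directions, namely verifying that a contactomorphism conjugating two such tori can be adjusted, by multiplication by torus elements, so as to normalize $S^1(\xi_k)$ and therefore descend to a symplectomorphism of the base; this is where the uniqueness of the regular ray from Proposition \ref{regunique} and the fact that $\xi_k$ is the only common Reeb vector field of the two tori (up to scale) play the key role.
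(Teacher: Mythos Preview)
Your approach matches the paper's, which simply cites Theorem \ref{conjmaxtori} together with the lifting results in \cite{Ler02b,Boy10a}; you are essentially unpacking what those lifting results say.

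One correction is needed: composing $\grf$ with an element of $T_{m'}$ cannot change which circle of $T_m$ is sent to which circle of $T_{m'}$, because $T_{m'}$ is abelian and conjugation by any $t\in T_{m'}$ acts trivially on $T_{m'}$. The right mechanism is precisely the one you mention only at the end. Since regularity of a Reeb flow is a contactomorphism-invariant property, $\varphi_*\xi_k$ is a regular Reeb vector field in the Sasaki cone of $T_{m'}$, and by Proposition \ref{regunique} it must lie on the ray $\bbr^+\xi_k$. Because $\varphi_*:\Lie(T_m)\to\Lie(T_{m'})$ respects the integer lattices, this forces $\varphi_*\xi_k=\xi_k$ on the nose. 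Evaluating the relation $\varphi^*\eta_k=f\eta_k$ on $\xi_k$ then gives $f\equiv 1$, so $\varphi$ is in fact a \emph{strict} contactomorphism and descends to a genuine symplectomorphism of $(\grS_g\times S^2,\gro_{1,k})$, yielding the contradiction with Theorem \ref{conjmaxtori}. With this replacement for the torus-composition step, your ``at least $k$'' argument is complete; the lift of the conjugating Hamiltonian isotopy needed in the ``exactly $k$'' direction is the standard prequantization lift supplied by \cite{Ler02b,Boy10a}.
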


\begin{proof}
This is an immediate consequence of Theorem \ref{conjmaxtori} together with the lifting results in \cite{Ler02b,Boy10a}.
\end{proof}

\begin{theorem}\label{sasbouq}
The manifold $\Sigma_g\times S^3$ admits a countably infinite number of distinct contact structures $\cald_k$ labelled by $k\in\bbz^+$ each having a Sasaki $k$-bouquet of Sasakian structures consisting of 2-dimensional Sasaki cones intersecting in a ray.
\end{theorem}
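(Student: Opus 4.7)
The plan is to assemble Theorem \ref{sasbouq} directly from the preceding structural results. First I would invoke Corollary \ref{infcont}, which already produces a countably infinite family of pairwise inequivalent contact structures of Sasaki type on $\grS_g\times S^3$ indexed by $k\in\bbz^+$ via the first Chern class formula of Lemma \ref{c1} (equivalently, by Proposition \ref{ineqcontact}, contactomorphism type on the trivial bundle is determined exactly by $k$). This takes care of the ``countably infinite number of distinct contact structures $\cald_k$'' part of the statement.

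Next I would fix a $k$ and build the $k$-bouquet on $(\grS_g\times S^3,\cald_k)$. For each $m\in\{0,1,\ldots,k-1\}$ the pseudo-Hirzebruch construction of Section \ref{regsas} (together with Lemma \ref{parlem} and Lemma \ref{evenHir}) yields a regular Sasakian structure on $\cald_k$ whose transverse K\"ahler quotient is $S_{2m}$ polarized by the class (\ref{symclass}); choosing a reducible representation $\grr$ of $\pi_1(\grS_g)$ in Lemma \ref{sasconecomp}(2) gives a two-dimensional Sasaki cone $\grk(\cald_k,J_{\grt,\grr,m})$. This is precisely the data of the first union in Theorem \ref{sasbouquetthm}, and I would simply quote that theorem to obtain $k$ such two-dimensional cones, together with the fact that they all meet in the ray of the regular Reeb field of the Boothby--Wang fibration $(\grS_g\times S^3,\eta_k)\to(\grS_g\times S^2,\gro_{1,k})$, as recorded in Remark \ref{sameReeb}.

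The only nontrivial content is to verify that these $k$ two-dimensional cones sit in $k$ genuinely distinct $T$-equivalence classes, i.e., that the associated maximal tori in $\gC\go\gn(\grS_g\times S^3,\cald_k)$ are pairwise non-conjugate. This is where I would rely on the key input Theorem \ref{conjmaxtori}, which uses equivariant Gromov--Witten invariants following Bu\c{s}e to show that the Hamiltonian circle actions $\tilde{\cala}_{2m}(\grl)$ for $m=0,\ldots,k-1$ represent $k$ distinct conjugacy classes of maximal tori in $\gH\ga\gm(\grS_g\times S^2,\gro_{1,k})$. Combining this with the lifting results of Lerman \cite{Ler02b} and \cite{Boy10a} (as in Proposition \ref{contconj}) promotes these to $k$ distinct conjugacy classes of maximal tori of Reeb type in $\gC\go\gn(\grS_g\times S^3,\cald_k)$, hence $k$ distinct $T$-equivalence classes of compatible transverse almost complex structures.

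In sum, the proof reduces to the citation chain: Corollary \ref{infcont} gives the infinitely many contact structures; Theorem \ref{sasbouquetthm} (restricted to the $m=0,\ldots,k-1$ summands) produces the $k$ two-dimensional Sasaki cones intersecting in the Reeb ray of $\eta_k$; and Theorem \ref{conjmaxtori} together with the Boyer--Lerman lifting argument guarantees that these cones really define a bouquet rather than collapsing. The main obstacle---the non-conjugacy of the maximal tori---has already been carried out in Theorem \ref{conjmaxtori} via EGW invariants, so no new calculation is required here; the present theorem is essentially a packaging statement.
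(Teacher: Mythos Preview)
Your proposal is correct and follows essentially the same approach as the paper's own proof, which cites Proposition \ref{ineqcontact} for the inequivalence of the $\cald_k$ and then appeals to Theorem \ref{conjmaxtori} together with the surrounding discussion (in particular the construction leading to Theorem \ref{sasbouquetthm} and the lifting results of \cite{Ler02b,Boy10a}) for the bouquet statement. Your write-up is simply a more explicit unpacking of that citation chain.
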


\begin{proof}
The fact that the contact structures $\cald_{k}$ and $\cald_{k'}$ are inequivalent when $k'\neq k$ is Proposition \ref{ineqcontact}. The statement about the bouquets is a consequence of the discussion above and Theorem \ref{conjmaxtori}.
\end{proof}

\subsection{Extremal Sasakian Structures}\label{ess}
In this section we will discuss how to lift our extremal K\"ahler metrics to extremal Sasaki metrics via the Boothby-Wang construction. When appropriate, we then deform in the Sasaki cone to obtain quasiregular Sasakian structures which project to K\"ahler orbifolds which in some (but not all) cases have so-called {\em admissible} extremal representatives. As in \cite{Boy10a,Boy10b,BoTo11} this will give rise to extremal Sasakian structures. Due to the higher genus of $\Sigma_g$, in case (3), the existence of {\em admissible} extremal K\"ahler metrics and the corresponding extremal Sasakian metrics is not universally given. 

To make this paper self-contained we shall first remind the reader of the definition of Extremal Sasakian metrics.
Given a Sasakian structure $\cals=(\xi,\eta,\Phi,g)$ on a compact manifold $M^{2n+1}$ we deform the contact 1-form by $\eta\mapsto \eta(t)=\eta+t\grz$ where $\grz$ is a basic 1-form with respect to the characteristic foliation $\calf_\xi$ defined by the Reeb vector field $\xi.$ Here $t$ lies in a suitable interval containing $0$ and such that $\eta(t)\wedge d\eta(t)\neq 0$. This gives rise to a family of Sasakian structures $\cals(t)=(\xi,\eta(t),\Phi(t),g(t))$ that we denote by ${\mathcal S}(\xi, \bar{J})$ where $\bar{J}$ is the induced complex structure on the normal bundle $\nu(\calf_\xi)=TM/L_\xi$ to the Reeb foliation $\calf_\xi$ which satisfy the initial condition $\cals(0)=\cals$. On the space ${\mathcal S}(\xi, \bar{J})$  we consider the ``energy functional'' $E:{\mathcal S}(\xi,\bar{J})\ra{1.4} \bbr$ defined by
\begin{equation}\label{var}
E(g) ={\displaystyle \int _M s_g ^2 d{\mu}_g ,}\, 
\end{equation}
i.e. the $L^2$-norm of the scalar curvature $s_g$ of the Sasaki metric $g$. Critical points $g$ of this functional are called {\it extremal Sasakian metrics}.  Similar to the K\"ahlerian case, the Euler-Lagrange equations for this functional says \cite{BGS06} that $g$ is critical if and only if the gradient vector field $J{\rm grad}_gs_g$ is transversely holomorphic, so, in particular, Sasakian metrics with constant scalar curvature are extremal. Since the scalar curvature $s_g$ is related to the transverse scalar curvature $s^T_g$ of the transverse K\"ahler metric by $s_g=s_g^T-2n$, a Sasaki metric is extremal if and only if its transverse K\"ahler metric is extremal. Hence, in the regular (quasi-regular) case, an extremal K\"ahler metric lifts to an extremal Sasaki metric, and conversely an extremal Sasaki metric projects to an extremal K\"ahler metric. Note that the deformation $\eta\mapsto \eta(t)=\eta+t\grz$ not only deforms the contact form, but also deforms the contact structure $\cald$ to an equivalent (isotopic) contact structure. So when we say that the contact structure $\cald$ has an extremal representative, we mean so up to isotopy. Deforming the K\"ahler form within its K\"ahler class corresponds to deforming the contact structure within its isotopy class. We refer the reader to Lemma 8.1 of \cite{BoTo11} and the discussion immediate above and below this lemma for a more thorough treatment of this issue.

Notice also that under a transverse homothety extremal Sasakian structures stay extremal, and a transverse homothety of a CSC Sasakian structure is also a CSC Sasakian structure. This is because under the transverse homothety $\cals\mapsto \cals_a$ the scalar curvature of the metric $g_a$  is given by (cf. \cite{BG05}, page 228) 
\begin{equation}\label{scalarhomothety}
s_{g_a}=a^{-1}(s_g+2n)-2n. 
\end{equation}
In the Riemannian and K\"ahlerian categories, scaling the metrics scales the scalar curvature $s_g$. So there are essentially 3 types, positive, negative and zero. However, in the Sasakian category, the relevant scaling is the transverse homothety. So the 3 types of scalar curvature in the Sasakian category are $>-2n,<-2n$, and $=-2n$. Moreover, if one has a Sasaki metric of scalar curvature $s_g=-2n$, the entire ray of Sasaki metrics have scalar curvature $-2n$. This is the Sasaki analog of scalar flat K\"ahler metrics.

Following \cite{BGS06} we define the {\it Sasaki extremal set} $\ge(\cald,J)$ to be the subset of $\grk(\cald,J)$ that can be represented by extremal Sasaki metrics.

Before discussing our admissible construction using Hamiltonian 2-forms, we present the constant scalar curvature (CSC) Sasaki metrics that arise from representations of the fundamental group. These involve only cases (1) and (2) of Section \ref{complexsubsec}. Applying the Boothby-Wang construction to the  local product K\"ahler structures in Section \ref{exKmet} and applying Lemma \ref{sasconeprop} gives

\begin{prop}
Let $M^5_{g,k}$ be a contact structure on either $\grS_g\times S^3$ or $\grS_g\tilde{\times} S^3$. Then
\begin{itemize}
\item For the transverse complex structures $J$ belonging to case (1)(a), parameterized by the smooth locus $\calr(\grS_g)^{irr}$ of the $6g-6$ dimensional character variety $\calr(\grS_g)$, {\em every }member of the one dimensional Sasaki cone $\grk(\cald_{k},J)$ admits CSC Sasaki metrics.  

\item For the transverse complex structure $J$ belonging to case (1)(b) {\em no} member of the one dimensional Sasaki cone $\grk(\cald_{k},J)$ admits extremal Sasaki metrics.

\item For the transverse complex structures $J$ belonging to case (2), parametrized by the $2g$ dimensional singular part of the character variety $\calr(\grS_g)$, the regular ray of the two dimensional Sasaki cone $\grk(\cald_{k,(1,1)},J)$ admits CSC Sasaki metrics.
\end{itemize}
\end{prop}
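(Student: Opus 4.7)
The plan is to reduce each of the three claims to the known existence (or non-existence) results for extremal K\"ahler metrics on complex ruled surfaces recalled in Section \ref{exKmet}, using the Boothby-Wang correspondence together with the description of the Sasaki cones in Lemma \ref{sasconeprop} and Proposition \ref{regunique}. The key principle is that a Sasaki metric is extremal (resp. CSC) if and only if its transverse K\"ahler metric is extremal (resp. CSC); in the regular case this transverse K\"ahler metric is simply the push-forward to the smooth quotient by the Reeb $S^1$-action, and the K\"ahler class is $\alpha_{1,k}$ on the pseudo-Hirzebruch (or indecomposable ruled) surface $(M,J)$.

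For case (1)(a), I would first invoke Lemma \ref{sasconeprop}(2) (or rather the indecomposable version of it, which holds since by Lemma 1 of \cite{ACGT11} indecomposable $E$ admits no Hamiltonian vector field) to conclude that $\grk(\cald_k,J)$ is one-dimensional and hence consists precisely of the transverse-homothety scalings of the regular Reeb field $\xi_k$. The Boothby-Wang quotient is the stable ruled surface $(M,J)=\grS_g\times_\grr\bbc\bbp^1$, which by the Narasimhan-Seshadri correspondence and the local product construction described in Section \ref{exKmet} carries a CSC K\"ahler metric in every K\"ahler class, in particular in $\alpha_{1,k}$; lifting it gives a CSC Sasaki representative of $\xi_k$. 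The transverse homothety formula \eqref{scalarhomothety} then shows that the entire ray consists of CSC Sasaki metrics.

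For case (1)(b), the same indecomposability argument gives $\dim\grk(\cald_k,J)=1$, so every Reeb in the cone is a transverse-homothety scaling of $\xi_k$ and therefore has the same (up to homothety) regular K\"ahler quotient. By Apostolov-T\o nnesen-Friedman \cite{ApTo06}, case (1)(b) admits no extremal K\"ahler metric in any K\"ahler class; by the correspondence above, $\xi_k$ has no extremal Sasaki representative, and \eqref{scalarhomothety} again propagates this conclusion along the entire one-dimensional cone. This is the only ``negative'' statement and its sole input is the \cite{ApTo06} obstruction, so there is no real obstacle beyond citing it.

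For case (2), Lemma \ref{sasconeprop}(1) gives a two-dimensional Sasaki cone $\grk(\cald_{k,(1,1)},J)$ and Proposition \ref{regunique} identifies a unique regular ray within it, corresponding to the Boothby-Wang fibration over the case (2) ruled surface $(M,J)$ with K\"ahler class $\alpha_{1,k}$. As recalled in Section \ref{exKmet}, polystable-but-not-stable ruled surfaces carry a local product CSC K\"ahler metric in every K\"ahler class; lifting this metric and invoking \eqref{scalarhomothety} along the regular ray yields the desired family of CSC Sasaki metrics. The main delicate point across the proof is the dimension count of the Sasaki cones for indecomposable $J$, which is where Lemma 1 of \cite{ACGT11} (absence of Hamiltonian vector fields in the indecomposable case) is essential; once this is in place, all three statements follow by routine application of Boothby-Wang and \eqref{scalarhomothety}.
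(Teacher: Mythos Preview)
Your proposal is correct and follows essentially the same approach as the paper, which simply states that the proposition follows by ``applying the Boothby-Wang construction to the local product K\"ahler structures in Section \ref{exKmet} and applying Lemma \ref{sasconeprop}.'' You have fleshed out precisely those two ingredients, additionally making explicit the role of the transverse homothety formula \eqref{scalarhomothety} in propagating the CSC (or non-extremal) property along the one-dimensional rays, which the paper leaves implicit.
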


Note that in the last case, case (2) only occurs on the trivial bundle $\grS_g\times S^3$.

\section{Admissible Constructions}\label{admis}

Let $(M,J)$ be a ruled surfaces of type (3).
It follows from Corollary 1 in \cite{ACGT08} that any smooth extremal K\"ahler metric 
on $(M,J)$ must admit a hamiltonian $2$-form of order $1$, that is, be so-called {\it admissible}. In short, this means that - up to scale and biholomorphism - it must arise from the construction described in section \ref{smoothconstruction} . This is a consequence of the uniqueness theorem for extremal K\"ahler metrics in \cite{ChTi05}. However, this uniqueness is still lacking in the orbifold case, so in that case uniqueness holds only within the Hamiltonian 2-form formalism.

Note that, being (local) product metrics, the CSC K\"ahler metrics on ruled surfaces of type (1)(a) and (2) also admit hamiltonian $2$-forms, but now with order $0$. Occasionally we shall refer to these metrics as admissible as well. Thus we may say that any smooth extremal K\"ahler metric on a (geometrically) ruled surface is admissible.

\subsection{The Smooth Case/Regular Ray}\label{smoothconstruction} 

Assume $(M,J)$ equals the total space of ${\mathbb P}({\mathcal O} \oplus {L_{n}}) \rightarrow \Sigma_g$, where $L_n \rightarrow \Sigma_g$ is a holomorphic line bundle of degree $n >0$.

Let us consider a ruled manifold of the form
$S_{n} = {\mathbb P}({\mathcal O} \oplus {L_{n}}) \rightarrow \Sigma_g$,
where $L_{n}$ is a  
holomorphic line bundle of degree $n$, where $n\in {\mathbb Z}^+$ on $\Sigma$, and 
${\mathcal O}$ is the trivial holomorphic line bundle.
Let $g_{\Sigma_g}$ be the
K\"ahler metric
on $\Sigma_g$ of constant scalar curvature $2s_{\Sigma_g}$, with K\"ahler form
$\omega_{\Sigma_g}$, such that
$c_{1}(L_{n}) = [\frac{\omega_{\Sigma_g}}{2 \pi}]$.
That is, $S_{n}$ is the $\bbc\bbp^1$ bundle over $\Sigma_g$ associated to a principle $S^1$ bundle over $\Sigma_g$ with curvature $\omega_{g}$.
Let ${\mathcal K}_{\Sigma_g}$ denote the canonical bundle of $\Sigma_g$. Since $c_1({\mathcal K}_{\Sigma_g}^{-1}) = [\rho_{\Sigma_g}/2\pi]$, where $\rho_{\Sigma_g}$ denotes the Ricci form, we have the relation $s_{\Sigma_g}= 2(1-g)/n$. For each smooth function $\Theta({\gz}), \gz \in [-1,1]$ satisfying
\begin{align}
\label{positivity}
(i)&~ \Theta({\gz}) > 0, \quad -1 < \gz <1,\quad
(ii)\ \Theta(\pm 1) = 0,\quad \\ \notag
(iii)& ~\Theta'(-1) =  2, \quad \Theta'(1) = -2
\end{align}
and each $0<r<1$
we obtain admissible 
K\"ahler metrics;
\begin{equation}\label{metric}
g  =  \frac{1+r {\gz}}{r} g_{\Sigma_g}
+\frac {d{\gz}^2}
{\Theta ({\gz})}+\Theta ({\gz})\theta^2,
\end{equation}
with K\"ahler form 
\begin{equation}
\omega =  \frac{1+r{\gz}}{r}\omega_{\Sigma_g}
+d{\gz}\wedge \theta\,. \label{kf}
\end{equation}
as in \cite{ACGT08} 

Notice that $\gz: S_{n} \rightarrow [-1,1]$ is a moment map of $\omega$ and  the circle action $\tilde{\cala}_{n}(\grl)$ (generated by the vector field $K_{n}$). Futher $\theta$ is a $1$-form such that $\theta(K_{n}) =1$ and $d\theta = \pi^*\omega_{\Sigma_g}$.

As usual $E_n = {\mathbb P}({\mathcal O} \oplus 0)$ denotes the zero section and $F$ denotes the fiber of the bundle $S_n \rightarrow \Sigma_g$. Then $E_n^2 = n$. Note that in the admissible set-up $E_n = \gz^{-1}(1)$.

The K\"ahler class of this metric satisfies
\[
{\rm PD}([\omega]) = 4\pi E_{n}+ \frac{2\pi(1-r)n}{r} F.
\]
Writing $F(\gz) = \Theta(\gz) (1+ r \gz)$, we see from Proposition 1 in \cite{ACGT08} that the corresponding metric is extremal exactly when
$F(\gz)$ is a polynomial of degree at most 4 and $F''(-1/r) = 2rs_{\Sigma_g}$. 
This, as well as the endpoint conditions of \eqref{positivity}, is satisfied precisely when $F(\gz)$ is given by
\begin{equation}\label{extremalpolynomial}
F(\gz) = \frac{(1-\gz^2)h(\gz)}{4 (3 - r^2)},
\end{equation}
where 
\[
\begin{array}{ccl}
h(\gz) & = & (12  - 8 r^2 + 2r^3 s_{\Sigma_g}) \\
\\
 &+& 4r (3 - r^2) \gz \\
 \\
 &+ &  2r^2(2-r s_{\Sigma_g})\gz^2,
 \end{array}
    \]
and $-1<\gz<1$. 
CSC solutions would correspond to $h(\gz)$ being a affine linear function and this is clearly never possible.
When $s_{\Sigma_g} \geq 0$, i.e., $g \leq 1$, we can then check that $h(\gz) >0$ for $-1<\gz <1$ hence $\Theta(\gz)$, as defined via $F(\gz)$ above, satisfies all the conditions of \eqref{positivity}. Thus in this case, for all $r \in (0,1)$ we have an extremal 
K\"ahler metric. However, for $g \geq 2$, i.e., $s_{\Sigma_g} < 0$ the positivity of $h(\gz)$, hence $\Theta(\gz)$ for $-1<\gz<1$ holds only when $0<r<1$ is sufficiently small. 

Assume first that $g \geq 2$, $k \neq 1 \in \bbz^+$, $n=2m$, $m = 1,...,k-1$, and 
$r= \frac{m}{k}$.
Since $ E_{2m}= E_0+mF$  we then have that $[\omega]$ is $4\pi$ times $h + k[\omega_g]$ as in \eqref{symclass}. 
With $r=m/k$ and $s_{\Sigma_g} = (1-g)/m$ we have
\[
\begin{array}{ccl}
h(\gz) & = & 2 (6 k^3 + m^2 - g m^2 - 4 k m^2)/k^3 \\
\\
 &+& (4 m (3 k^2 - m^2)/k^3) \gz \\
 \\
 &+ &  (2 ( g + 2 k-1) m^2/k^3)\gz^2.
 \end{array}
    \]
Let us explore the positivity of $h(\gz)$ for this case.

Clearly $h(\gz)$ is  a concave up parabola in $\gz$ and the absolute minimum of this parabola
is easily calculated to be $2 M/(k^3 (g+2 k-1))$, where
\[
M = 3 k^3 ( 2 g + k-2) +( 2 g - g^2 + 6 k - 6 g k - 2 k^2-1)m^2  -m^4
\]
For a fixed $g\geq 2$ and $k\geq2$, this is a decreasing function of $m \in [1,k-1]$, so for
$m=1,...,k-1$, $M$ would be minimized when $m=k-1$. Now
\[
M|_{m=k-1} =  12 k - 21 k^2 + 8 k^3-2  + 2 (1 - 5 k + 7 k^2) g -( k-1)^2g^2
\]
and for a fixed $k\geq 2$, this is a concave down parabola in $g$ and it is easy to verify that  it is positive for any $k \geq2$ at $g=2$ and $g=19$ and hence at any $2 \leq g \leq 19$.
Thus for any $k\geq 2$, $m = 1,...,k-1$ and $g = 2,...,19$, the minimum of $h(\gz)$ is positive and thus the positivity of $h(\gz)$ is satisfied. Finally, for any fixed $g \geq 2$, we see that
for sufficiently large values of $k$, $M|_{m=k-1}  >0$. Thus for any given $g \geq 2$, there exist some $k_g \geq 2$, s.t. $\forall k \geq k_g$ and $m=1,...,k-1$ the minimum of $h(\gz)$ is positive and thus the positivity of $h(\gz)$ is satisfied.

On the other hand for e.g. $k=6$, and $m=k-1=5$ we have that
$M = 1042 + 446 g - 25 g^2 < 0$ for all $g \geq 20$, so the minimum of $h(\gz)$ is negative for
$g \geq 20$ and moreover the location of the minimum is $z=-83/(5 (11 + g)) \in (-1,1)$. Therefore
the positivity condition for $h(\gz)$ is NOT satisfied for these data.

The case $g \geq 2$, $k \in \bbz^+$, $n=2m+1$, $m = 0,...,k-1$, and 
$r= \frac{2m+1}{2k+1}$ is completely similar. More specifically the details of the argument are as follows: 
Since $E_{2m+1} = E_1 + m F$  we still have that $[\omega]$ is $4\pi$ times $h + k[\omega_g]$ as in \eqref{symclass}. 
With $r=(2m+1)/(2k+1)$ and $s_{\Sigma_g} = 2(1-g)/(2m+1)$ we have
\[
\begin{array}{ccl}
h(\gz) & = & 4 (2 - g + 14 k + 36 k^2 +24 k^3 - 4 m(1+m)( (1+g) +4k) )/(1 + 2 k)^3 \\
\\
 &+& 8 (1 + 2 m) (1 + 6 k(1+k)  - 2 m(1+m))/(1 + 2 k)^3 \gz \\
 \\
 &+ &  4 (g + 2 k) (1 + 2 m)^2/(1 + 2 k)^3\gz^2.
 \end{array}
    \]
Again $h(\gz)$ is  a concave up parabola in $\gz$ and the absolute minimum of this parabola
is $4 M/((1 + 2 k)^3 (g + 2 k))$, where
\[
\begin{array}{ccl}
M & = & -(1- g)^2 - 4 k(2+5k+3k^3) + 12 g k(1  + 3 k + 2  k^2)  \\
\\
& + & 4(1 - g(1+g +6k)  - 2 k(k-2)) m -  4(g(1 + g)  + 2 k( 3g + k-2) m^2 \\
    \\
    & - & 8 m^3 - 4 m^4
    \end{array}
\]
For a fixed $g\geq 2$ and $k\geq2$, this is a decreasing function of $m \in [1,k-1]$, so for
$m=0,...,k-1$, $M$ would be minimized when $m=k-1$. Now
\[
M|_{m=k-1} =  -1 - 12 k - 36 k^2 + 32 k^3  + 
 (2 + 16 k + 56 k^2)g - ( 4 k^2- 4 k+1 ) g^2
\]
and for a fixed $k\geq 2$, this is a concave down parabola in $g$ which is positive at $g=2$ and $g=19$ and hence at any $2 \leq g \leq 19$.
Thus for any $k\geq 2$, $m = 1,...,k-1$ and $g = 2,...,19$, the minimum of $h(\gz)$ is positive and thus the positivity of $h(\gz)$ is satisfied. Finally, for any fixed $g \geq 2$, we see that
for sufficiently large values of $k$, $M|_{m=k-1}  >0$. Thus for any given $g \geq 2$, there exist some $k_g \geq 2$, s.t. $\forall k \geq k_g$ and $m=0,...,k-1$ the minimum of $h(\gz)$ is positive and thus the positivity of $h(\gz)$ is satisfied.

Again for e.g. $k=6$, and $m=k-1=5$ we have that
$M = 5543 + 2114 g - 121 g^2 < 0$ for all $g \geq 20$, so the minimum of $h(\gz)$ is negative for
$g \geq 20$ and moreover the location of the minimum is $z=-(193/(11 (12 + g))) \in (-1,1)$. Therefore
the positivity condition for $h(\gz)$ is NOT satisfied for these data.

Putting this together with results from Section \ref{regsas} we have

\begin{theorem}\label{deg>0ext}
For any choice of genus $g=2,...,19$ the regular ray in the Sasaki cone $\grk(M^5_{g,l,\bfw},J_\bfw)$  admits an extremal representative with non-constant scalar curvature.

For any choice of genus $g=20,21,...$ there exists a $K_g \in \bbz^+$ such that
if $l |\bfw | \geq K_g$,
then the regular ray in the Sasaki cone $\grk(M^5_{g,l,\bfw},J_\bfw)$ admits an extremal representative with non-constant scalar curvature.

For any choice of genus $g=20,21,...$ there exist at least one choice of $(l,w_1,w_2)$ such that the regular ray in the Sasaki cone $\grk(M^5_{g,l,\bfw},J_\bfw)$ admits no extremal representative, despite the fact that
the quasi-regular Sasaki structure $\cals_{l,\bfw}=(\xi_\bfw,\eta_{l,\bfw},\Phi_\bfw,g_\bfw)$ is extremal.
\end{theorem}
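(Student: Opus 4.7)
The plan is to reduce the existence question on the regular ray to a single positivity condition on the quadratic $h(\gz)$ that governs the admissible extremal polynomial, and then to organize the explicit calculations already laid out above into the three assertions of the theorem.

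First I would carry out the reduction. The regular ray in $\grk(M^5_{g,l,\bfw},J_\bfw)$ corresponds via Proposition \ref{regunique} to the choice $\bfv=(1,1)$, whose transverse K\"ahler quotient is the smooth pseudo-Hirzebruch surface $S_n$ with $n=l(w_1-w_2)$ and K\"ahler class $[\gro]=h+k[\gro_g]$ as in \eqref{symclass}, with $k$ determined by Lemma \ref{parlem}. An extremal Sasakian representative on the regular ray descends to an extremal K\"ahler metric on $S_n$ in this class, and conversely. When $n>0$, Corollary 1 of \cite{ACGT08} combined with the Chen-Tian uniqueness theorem \cite{ChTi05} forces any smooth extremal K\"ahler metric on $S_n$ to be admissible, so the existence question becomes: does the function $F(\gz)$ from \eqref{extremalpolynomial} meet the positivity requirements of \eqref{positivity}? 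This reduces precisely to the positivity of $h(\gz)$ on $(-1,1)$.

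Next I would use the parabolic analysis already set up in the excerpt. In both parities of $n$, $h(\gz)$ is a concave-up parabola in $\gz$ whose absolute minimum value is a positive multiple of an integer quantity $M(g,k,m)$ computed explicitly above. A direct check shows $M$ is decreasing in $m$ on the range of interest, so the worst case is $m=k-1$; and $M|_{m=k-1}$ is itself a concave-down quadratic in $g$, with positive values at the endpoints $g=2$ and $g=19$ for every $k\geq 2$. Concavity in $g$ then gives $M>0$, hence $h>0$ on $(-1,1)$, for all $g=2,\ldots,19$ and every admissible $(k,m)$, which is the first assertion. For the second assertion, one observes that $M|_{m=k-1}$ has positive leading $k^3$-coefficient, so for each fixed $g\geq 20$ it is eventually positive for all $m=1,\ldots,k-1$ once $k$ exceeds some threshold $K'_g$; translating through $l|\bfw|\in\{2k,2k+1\}$ yields the desired $K_g$.

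For the non-existence assertion I would exhibit an explicit counterexample. Taking $k=6$ and $m=k-1=5$, Lemma \ref{evenHir} forces $l=1$ and $\bfw=(11,1)$. Direct substitution into the formulas above gives $M=1042+446g-25g^2$, which is strictly negative for every $g\geq 20$, while the location of the minimum of $h(\gz)$ is $\gz^*=-83/(5(11+g))$ and lies inside $(-1,1)$. Hence $h$ is strictly negative at an interior point of $(-1,1)$, no admissible metric exists on $S_{10}$ in the relevant K\"ahler class, and by the uniqueness result of the reduction step no smooth extremal K\"ahler metric at all exists on $S_{10}$ in this class. Consequently the regular ray in $\grk(M^5_{g,1,(11,1)},J_{(11,1)})$ admits no extremal Sasakian representative. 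The parenthetical remark that $\cals_{l,\bfw}$ itself is nevertheless extremal is immediate: at the Reeb vector field $\xi_\bfw$ the transverse K\"ahler quotient is the product of a CSC metric on $\grS_g$ with the Bochner-flat (hence extremal) metric on $\bbc\bbp(\bfw)$.

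The main obstacle is not conceptual but bookkeeping: one must verify the positivity of $M|_{m=k-1}$ at $g=2$ and $g=19$ for all $k\geq 2$ in both parities, and in the counterexample one must confirm that the minimum of the parabola actually lies inside $(-1,1)$, since a negative unrestricted minimum that lay outside this interval would not violate positivity on $(-1,1)$ and would therefore fail to produce a genuine obstruction.
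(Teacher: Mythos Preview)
Your proposal is correct and follows essentially the same approach as the paper: the paper carries out exactly this parabolic analysis of $h(\gz)$ in both parities, checks the endpoints $g=2$ and $g=19$ of the concave-down quadratic $M|_{m=k-1}$, notes the cubic growth in $k$ for large $k$, and exhibits the same $(k,m)=(6,5)$ counterexample (in both the even and odd parities), verifying that the vertex lies in $(-1,1)$. Your reduction step via Chen--Tian uniqueness and the closing remark on extremality of $\cals_{l,\bfw}$ are also precisely what the paper invokes.
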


This proves Theorem \ref{deg>0intro} of the Introduction.

\begin{remark}\label{non-existence-example}
Notice that by Lemma \ref{evenHir} the choice of $k=6$ and $m=5$ alluded to above is manifested in either the choice of
$(l,w_1,w_2) = (1,11,1)$, where the Sasaki manifold is $\Sigma_g \times S^3$, or in the choice of $(l,w_1,w_2) = (1,12,1)$, where the Sasaki manifold is $\Sigma_g \tilde{\times} S^3$.
\end{remark}

\subsection{Orbifold Singularities:}\label{orbisetup}

We will now extend the constructions in Section \ref{smoothconstruction} to a certain orbifold case Let $p,q \in {\bbz}^+$ be relative prime and consider the
(simply connected) weighted projective space $\bbc\bbp_{p,q}$. Generalizing the conditions \eqref{positivity} above to
 \begin{align}
\label{conesingpositivity}
(i)&~ \Theta({\gz}) > 0, \quad -1 < \gz <1,\quad
(ii)\ \Theta(\pm 1) = 0,\quad \\ \notag
(iii)& ~\Theta'(-1) =  2/p, \quad \Theta'(1) = -2/q
\end{align}
- and keeping everything else the same as above -
we produce a K\"ahler orbifold metric on the $\bbc\bbp_{p,q}$ bundle over $\Sigma_g$ associated to a principle $S^1$ bundle over $\Sigma_g$ with curvature $\omega_{\Sigma_g}$.
This is of course an orbifold pseudo-Hirzebruch surface $(S_n, \Delta_{(q,p)})$.
We will also allow the possibility that $n <0$, which means that $-1<r<0$, and $-\omega_{\Sigma_g}$ is a K\"ahler form with scalar curvature $-2 s_\Sigma = -2(1-g)/n$.

\begin{remark}\label{scalerem}
Via the orbifold Boothby-Wang construction, for appropriate choices of $r$, the metrics lifts to Sasaki metrics on $S^1$ orbi-bundles over $M$. Making a transverse homothety transformation on the Sasakian level as in Equation 7.3.10 of \cite{BG05} rescales the Reeb vector field by a factor $a^{-1}$ and the transverse K\"ahler metric by a factor $a$. Keeping the vector field $K_{n}$ (and the corresponding $\theta$) fixed  this necessitates a rescaling of the endpoints of the interval of the moment map. So with the range of $z$ fixed to be $[-1,1]$ 
transversal homotheties in the Sasaki cone are not visible in the above construction.

On the other hand, if for some $\kappa\in \bbz^+$ we formally replace $(p,q)$ by $(\kappa p,\kappa q)$ and $s_\Sigma$ by 
$\frac{s_\Sigma}{\kappa}$ then $\Theta(\gz)$ simply rescales by $1/\kappa$. To interpret this correctly, we replace $K_{n}$ by $\frac{1}{\kappa}K_{n}$, $\theta$ by $\kappa\theta$, $\omega_{\Sigma_g}$ by $\kappa\omega_{\Sigma_g}$, and leave $r$ and $z$ unchanged. Then the new metric (which locally is just $\kappa$ times the old) is a K\"ahler metric on the total space of the $\bbc\bbp_{p,q}/\bbz_{\kappa}$ bundle over $\Sigma_g$ associated to a principle $S^1$ bundle over $\Sigma_g$ with curvature $\kappa\omega_{\Sigma_g}$. Thus the two 
K\"ahler orbifolds are related up to a $\kappa$-fold covering and a homothety.
 \end{remark}
 
 \bigskip

The conditions for extremality remains the same and
together with the endpoint conditions of \eqref{conesingpositivity} this implies that $F(\gz) = \Theta(\gz)(1+r\gz)$ must be given by
\begin{equation}\label{orbiextremalpolynomial}
F(\gz) = \frac{(1-\gz^2)h(\gz)}{4 p q (3 - r^2)},
\end{equation}
where 
\[
\begin{array}{ccl}
h(\gz) & = & q (6 - 3 r - 4 r^2 + r^3) + p (6 +  3 r - 4 r^2 - r^3) \\
\\
 &+& 2 (3 - r^2) (q (r-1) + p (1 + r))\gz \\
 \\
 &+ &  r(p (3 + 2 r - r^2) - q (3 - 2 r - r^2))\gz^2\\
 \\
 &+ &2pqr^3 s_{\Sigma_g}(1-\gz^2),
 \end{array}
    \]
and $-1<\gz<1$. When $rs_{\Sigma_g} \geq 0$, i.e., $g \leq 1$, we can then check that $\Theta(\gz)$ as defined via $F(\gz)$ above satisfies all the conditions of \eqref{conesingpositivity}. Again, for $g \geq 2$, i.e., $rs_{\Sigma_g} < 0$ the positivity of $h(\gz)$, hence $\Theta(\gz)$ for $-1<\gz<1$ does not necessarily hold. On the other hand, by design $h(\gz)$ is such that when $F(\gz)$ is given by
\eqref{orbiextremalpolynomial} and $\Theta(\gz) = F(\gz)/(1+r \gz)$ then the endpoint conditions of \eqref{conesingpositivity} are satisfied. Therefore we always have that $h(\pm 1) >0$. In particular,
if either the graph of $h(\gz)$ is  concave down or $h(\gz)$ is an affine linear function then the positivity is certainly satisfied. The latter scenario is - in contrast with the smooth case - now quite possible (see Section \ref{examples} below) and provides examples of CSC K\"ahler orbifold metrics.

\subsection{CSC Examples}\label{examples}
From the above section it is clear that CSC examples arise when we can solve
\begin{equation}\label{CSCequation}
p (3 + 2 r - r^2) - q (3 - 2 r - r^2)-2pqr^2 s_{\Sigma_g}=0
\end{equation}

We now need to make the connection with the quasi-regular Sasaki structures obtained in section 
\ref{quasi-reg-ray}. Consider the Sasaki cone $\grk(M^5_{g,l,\bfw},J_\bfw)$  for some
$l \in  {\mathbb Z}^+$ and co-prime $w_1 \geq w_2 >0 \in {\mathbb Z}$ and consider the ray determined by $\bfv=(v_1,v_2)$ where $v_1,v_2\in\bbz^+$ and  $\gcd(v_1,v_2)=1$.  From Lemma
\ref{quasireglem} we know that the base space $B_{\bfv,\bfw}$ is an orbifold pseudo-Hirzebruch surface $(S_n, \Delta_{\bf v})$ with $n=l(w_1v_2 - w_2 v_1)$ and the
 K\"ahler class is $[\omega] = h + k[\omega_g]$ with $k$ given by Lemma \ref{qregk}.
 We will assume that $(v_1,v_2) \neq (w_1,w_2)$, so $n\neq 0$, noting that the ray determined by $(w_1,w_2)$ is certainly extremal (and CSC if and only if $(w_1,w_2)=(1,1)$).
 As in section \ref{smoothconstruction}, if $n=2m$ is even, we need $r=m/k$ and if $n=2m+1$ is odd we need $r=(2m+1)/(2k+1)$. Either way this implies that 
 $$ r= \frac{w_1v_2 - w_2 v_1}{w_1v_2 + w_2 v_1}.$$ Further, it is clear that $p=v_2$ and $q=v_1$ (we remind the reader that $E_n= \gz^{-1}(1)$). Finally, $s_{\Sigma_g} = 2(1-g)/(l(w_1v_2 - w_2 v_1))$.
 Using this, equation \eqref{CSCequation} simplifies to
$$ lw_2^2 v_1^3+(g-1 +2lw_1)w_2v_1^2v_2 + (1-g-2lw_2) w_1 v_1 v_2^2-lw_1^2v_2^3 =0$$
If we set $v_2=c v_1$ (so $c \neq 1$ is assumed), then this in turn simplifies to
 \begin{equation}\label{cscallgenera}
  lw_1^2 c^3 - (1-g-2lw_2) w_1c^2-(g-1 +2lw_1)w_2c- lw_2^2=0.
  \end{equation} 
It is a simple calculus exercise to show that for any pair of positive integers $w_1>w_2$, equation 
\eqref{cscallgenera}
has precisely one positive solution $c=c_{\bf w}\neq 1$. Thus in this case the Sasaki cone $\grk(M^5_{g,l,\bfw},J_\bfw)$ has a unique ray, $v_2=c_{\bf w}v_1$, of CSC Sasaki metrics. In fact, one can also easily verify that since $w_1 > w_2$, then $c_{\bf w} \in (w_2/w_1,1)$ and hence for the CSC ray $(\frac{w_2}{w_1}) v_1 < v_2 < v_1$. Whenever the co-prime pair $(w_1,w_2)$ is such that $c_{\bf w}$  is rational, then it is possible to chose a corresponding pair
$(v_1,v_2)$ of co-prime positive integers and the CSC Sasaki metrics are quasi-regular. If $c_{\bf w}$ is non-rational then we have a ray of irregular CSC Sasaki metrics. Also note that if we let $w_1=w_2=1$, then of course we have that the ray $v_2=v_1$ is a regular CSC ray. In this case  equation \eqref{CSCequation} has no positive solution $c_{\bf w}$ other than $c_{\bf w} =1$ manifesting the uniqueness of the CSC ray also in this case.

\begin{proposition}
For genus $g \geq 1$, the Sasaki cone $\grk(M^5_{g,l,\bfw},J_\bfw)$ contains a unique ray of CSC Sasaki metrics such that the transverse K\"ahler metric admits a hamiltonian $2$-form. 
\end{proposition}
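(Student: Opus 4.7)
The plan is to reduce the claim to a root-counting problem for an explicit cubic. Within the admissible (Hamiltonian $2$-form) framework of Section~\ref{orbisetup}, every ray of the Sasaki cone $\grk(M^5_{g,l,\bfw},J_\bfw)$ is parametrised (up to scale) by a coprime pair $(v_1,v_2)\in\bbz^+$, and by Lemma~\ref{quasireglem} the associated K\"ahler quotient is the log pair $(S_n,\grD_\bfv)$ with $n=l(w_1v_2-w_2v_1)$. The admissible extremal polynomial $F(\gz)=\Theta(\gz)(1+r\gz)$ corresponds to a CSC transverse K\"ahler metric precisely when the $\gz^2$-coefficient of $h(\gz)$ vanishes, i.e.\ when equation \eqref{CSCequation} is satisfied; the required positivity of $\Theta$ on $(-1,1)$ is then automatic, since $h$ becomes affine linear and $h(\pm1)>0$ by the endpoint conditions \eqref{conesingpositivity}.

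I would then translate \eqref{CSCequation} into the join parameters $p=v_2$, $q=v_1$, $r=(w_1v_2-w_2v_1)/(w_1v_2+w_2v_1)$ and $s_{\Sigma_g}=2(1-g)/n$. After setting $c=v_2/v_1$ and clearing denominators, a direct simplification yields the cubic equation \eqref{cscallgenera}
\begin{equation*}
P(c):=lw_1^2 c^3-(1-g-2lw_2)w_1 c^2-(g-1+2lw_1)w_2 c - lw_2^2 = 0.
\end{equation*}
Admissible CSC rays in the cone are then in bijection with positive real roots of $P$, with one potential exception: the product ray $(v_1,v_2)=(w_1,w_2)$, where $n=0$ and the admissible derivation degenerates. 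This case must be treated by hand, but it is immediate: the quotient is the product orbifold $\grS_g\times \bbc\bbp[\bfw]$, which is CSC iff $w_1=w_2$, in full agreement with what the cubic will predict.

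The core of the argument is then to show that $P$ has exactly one positive real root whenever $g\geq 1$, $l\in\bbz^+$ and $w_1,w_2\in\bbz^+$ are coprime. Under these hypotheses the four coefficients of $P$ have signs
\begin{equation*}
lw_1^2>0,\quad (g-1+2lw_2)w_1>0,\quad -(g-1+2lw_1)w_2<0,\quad -lw_2^2<0,
\end{equation*}
exhibiting exactly one sign change. Descartes' rule of signs then produces exactly one positive real root $c_{\bfw}$, yielding the unique admissible CSC ray $v_2=c_{\bfw}v_1$; it is quasi-regular or irregular according as $c_{\bfw}\in\bbq$ or not. In the boundary case $w_1=w_2=1$ one computes $P(1)=0$ directly, recovering the regular CSC Sasakian structure from the Boothby-Wang fibration over the product CSC K\"ahler base.

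I do not anticipate any serious obstacle beyond carrying out the algebraic substitution that produces $P$. The one place the hypothesis $g\geq 1$ is used essentially is in the sign pattern above: for $g=0$ the middle two coefficients may change sign and the elementary count breaks down, which is consistent with the richer non-uniqueness behaviour observed in the toric $\grS_0$ setting by Legendre~\cite{Leg10}.
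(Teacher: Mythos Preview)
Your proposal is correct and follows the same path as the paper: reduce to the CSC condition \eqref{CSCequation}, substitute the join parameters to obtain the cubic \eqref{cscallgenera}, and count positive roots. The paper disposes of the root count by calling it ``a simple calculus exercise'' and separately notes that $c_{\bfw}\in(w_2/w_1,1)$; your use of Descartes' rule of signs is a cleaner and more explicit justification of the same fact, and your handling of the degenerate product ray $(v_1,v_2)=(w_1,w_2)$ matches the paper's treatment of the $w_1=w_2=1$ case.
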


This proposition proves Theorem \ref{thm1} of the Introduction.

\begin{example}\label{genus1csc}
Assume $g=1$. In this case \eqref{cscallgenera} simplifies to
\begin{equation}\label{cscgenus1}
w_1^2 c^3 + 2 w_1 w_2 c^2 -2 w_1 w_2 c - w_2^2 = 0
\end{equation}
For instance for any integer $ t>1$, choosing $(w_1,w_2)$ such that 
$w_2/w_1 = (1+2t)/(t^2(2+t))$ yields $c_{\bf w} = (1+2t)/(t(2+t))$. On the other hand for e.g 
$(w_1,w_2) = (2,1)$, equation \eqref{cscgenus1} is equivalent to
$$4c^3+4c^2-4c-1,$$
which has no rational roots.
\end{example}

\begin{example}\label{genushighercscfamily}
Assume $g \geq 2$. Then it is easy to see that for $l=g-1$, $w_1=12$ and $w_2=1$, equation
\eqref{cscallgenera}  is solved for $c_{\bf w} = 1/3$.
\end{example}

Using Examples \ref{genus1csc} and \ref{genushighercscfamily} we arrive at

\begin{proposition}
For any genus $g \geq 1$, there exist values of $l$ and ${\bf w} = (w_1,w_2)\neq (1,1)$ such that 
the Sasaki cone $\grk(M^5_{g,l,\bfw},J_\bfw)$ admits a quasi-regular CSC Sasaki metric.
\end{proposition}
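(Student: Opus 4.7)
The plan is to deduce the proposition directly from the preceding analysis of the CSC equation \eqref{cscallgenera} by exhibiting, for each genus $g \geq 1$, explicit integer data $(l, w_1, w_2)$ with $(w_1,w_2) \neq (1,1)$ for which the unique positive solution $c_{\bfw}$ of \eqref{cscallgenera} is rational. Since the CSC ray in $\grk(M^5_{g,l,\bfw}, J_\bfw)$ is parametrized by $v_2 = c_{\bfw} v_1$, rationality of $c_{\bfw}$ allows us to clear denominators and choose a coprime pair $(v_1, v_2) \in \bbz^+ \times \bbz^+$ on this ray; the resulting Sasakian structure $\cals_\bfv$ is then quasi-regular by Proposition \ref{defsas}. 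The fact that $c_\bfw \neq 1$ (guaranteed by $\bfw \neq (1,1)$ via the uniqueness statement preceding Example \ref{genus1csc}) ensures the ray is not the regular one and is distinct from the $\bfw$-ray itself.

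For the $g = 1$ case, I would invoke Example \ref{genus1csc}: taking any integer $t > 1$ and the coprime pair $(w_1, w_2)$ arising from the ratio $w_2/w_1 = (1+2t)/(t^2(2+t))$, one checks by direct substitution into \eqref{cscgenus1} that $c_\bfw = (1+2t)/(t(2+t))$ solves the cubic. This value is rational and, for $t > 1$, visibly differs from $1$; writing it as a fraction of coprime positive integers yields the required $(v_1,v_2)$.

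For $g \geq 2$, I would invoke Example \ref{genushighercscfamily}: with $l = g-1$, $w_1 = 12$, $w_2 = 1$, one substitutes into \eqref{cscallgenera} and verifies algebraically that $c_\bfw = 1/3$ is a root, i.e.\ that
\[
144 \cdot \tfrac{1}{27} - (1 - g - 2(g-1)) \cdot 12 \cdot \tfrac{1}{9} - (g - 1 + 24(g-1)) \cdot \tfrac{1}{3} - (g-1) = 0.
\]
Taking $(v_1, v_2) = (3, 1)$ then produces a quasi-regular CSC Sasakian structure in $\grk(M^5_{g,g-1,(12,1)}, J_{(12,1)})$.

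There is no serious obstacle here beyond the two explicit computations just indicated; the substantive content was established in the proof of the preceding proposition, which reduced existence of a CSC ray to the unique positive root $c_\bfw$ of \eqref{cscallgenera} and identified quasi-regularity with the rationality of that root. Accordingly, the proof amounts to citing Examples \ref{genus1csc} and \ref{genushighercscfamily} and noting that in each case the exhibited $c_\bfw$ is both rational and distinct from $1$.
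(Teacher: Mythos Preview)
Your proposal is correct and follows exactly the paper's approach: the proposition is stated immediately after Examples \ref{genus1csc} and \ref{genushighercscfamily} with the one-line justification ``Using Examples \ref{genus1csc} and \ref{genushighercscfamily} we arrive at\ldots'', and you have simply spelled out that citation. One small slip: in your displayed verification for $g\geq 2$ the first term is missing its factor of $(g-1)$ (the equation \eqref{cscallgenera} with $l=g-1$, $w_1=12$, $w_2=1$ reduces, after dividing through by $g-1$, to $144c^3+36c^2-25c-1=0$, which indeed vanishes at $c=1/3$).
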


\begin{example}\label{highergenuscsc}
Assume $w_1=12$ while $w_2=1$ and $l=1$. 
Then \eqref{cscallgenera} is
\begin{equation}\label{w1=12w2=1}
144 c^3 + 12(g+1) c^2 - (23+g) c -1 = 0,
\end{equation}
and this has a rational solution $c_{\bf w}=1/3$ when $g=2$ (special case of Example \ref{genushighercscfamily})
and a rational solution  $c_{\bf w}=1/6$ when $g=23$. The rational root test reveals that for all other values of $g$ the solution $c_{\bf w}$ is irrational.
It is interesting to observe that 
$\lim_{g\rightarrow +\infty} c_{\bf w} = 1/12$ corresponding to $(v_1,v_2)= (12,1)=(w_1,w_2)$.
This is not surprising for a couple of reasons. Firstly, from Remark \ref{non-existence-example} we know that for $g \geq 20$, the regular ray in the Sasaki cone $\grk(M^5_{g,l,\bfw},J_\bfw)$ is not extremal, while certainly the ray $(v_1,v_2)= (12,1)=(w_1,w_2)$ is extremal. By the Openess Theorem \cite{BGS06} any ray {\em sufficiently close} to the ray  $(v_1,v_2)= (12,1)=(w_1,w_2)$ is also extremal. Intuitively it makes sense that the larger the $g$, the closer the rays need to be to $(v_1,v_2)= (12,1)=(w_1,w_2)$ in order to be extremal and so the CSC ray must be {\em pushed towards} 
$(v_1,v_2)= (12,1)=(w_1,w_2)$ as $g$ increases. Secondly, while the product metric 
$\omega_g + \omega_{\bf w}$ on  
$\Sigma_g \times \bbc\bbp_{w_1,w_2}$ is extremal but not CSC, the larger the value of $g$ is, the more that the scalar curvature of $\omega_g$, which is constant, dominates the overall scalar curvature
of the product which then in turn becomes ``approximately constant''. 
\end{example}

\subsection{Extremal Metrics in the Sasaki cone}\label{extremalincone}
From the observations in section \ref{orbisetup} and by the Openess Theorem \cite{BGS06} we have that for genus $g=1$, the Sasaki cone $\grk(M^5_{g,l,\bfw},J_\bfw)$  is exhausted by extremal Sasaki metrics. On the other hand we know from Theorem \ref{deg>0ext} that for $g \geq 20$ there exists at least one choice of $(l,w_1,w_2)$ for which the regular ray in  the Sasaki cone $\grk(M^5_{g,l,\bfw},J_\bfw)$ is not extremal. Now $h(\gz)$ from section \ref{orbisetup}
with  $ r= \frac{w_1v_2 - w_2 v_1}{w_1v_2 + w_2 v_1}$, $p=v_2$,  $q=v_1$, \newline
$s_{\Sigma_g} = 2(1-g)/(l(w_1v_2 - w_2 v_1))$, and finally $v_2=c v_1$ can be written as

$$
\begin{array}{cl}
&\frac{l(cw_1 + w_2)^3}{4 v_1}h(\gz) \\
\\
 = & l w_2^3 (1 - \gz)^2\\
\\
 +& w_2^2 (1-\gz)(7lw_1+1-g +(1-g-lw_1)\gz) c \\
\\
+ & 2w_1w_2(g-1+2l(w_1+w_2) + l(w_2-w_1)\gz +(1-g-l(w_1+w_2)) \gz^2) c^2\\
\\
 +& w_1^2 (1 + \gz) (1 - g + 7 l w_2  +(g-1 + l w_2) \gz)c^3 \\
\\
+& lw_1^3(1+\gz)^2 c^4.
\end{array}
$$
Viewing $\frac{l(cw_1 + w_2)^3}{4 v_1}h(\gz)$ as a polynomial of $c$ for a moment we observe that the constant coefficient and the coefficient of $c^4$ are both positive for $\gz \in (-1,1)$.
Further, for any value of genus $g$, the coefficient of $c^2$ is a concave down polynomial in $\gz$ which is positive at $\gz=\pm1$ and hence positive for all $\gz\in[-1,1]$. Finally the coefficients of $c$ and $c^3$ are both positive for $\gz \in (-1,1)$ as long as $g \leq Min(1+3lw_1,1+3lw_2)$.
Thus, the latter condition guarantees that $h(\gz) >0$ for $z \in (-1,1)$.

Therefore, if this condition is satisfied, then any rational $c \in (0,+\infty)$ corresponds to a pair of co-prime positive integers $(v_1,v_2)$ which in turn yields a quasi-regular extremal Sasaki metric and by the Openess Theorem \cite{BGS06} and denseness of quasi-regular rays in $\grk(M^5_{g,l,\bfw},J_\bfw)$ we may conclude:
\begin{proposition}\label{exhprop}
Consider the Sasaki cone $\grk(M^5_{g,l,\bfw},J_\bfw)$  for some
$l \in  {\mathbb Z}^+$ and co-prime $w_1 \geq w_2 >0 \in {\mathbb Z}$. 
If $g \leq 1+3lw_2$, then $\ge(M^5_{g,l,\bfw},J_\bfw)= \grk(M^5_{g,l,\bfw},J_\bfw)$, that is the Sasaki cone is exhausted by extremal Sasaki metrics. In particular for $g \leq 4$ and for any $l,{\bf w}$ the Sasaki cone $\grk(M^5_{g,l,\bfw},J_\bfw)$ is exhausted by extremal Sasaki metrics
whose transverse K\"ahler metric admits hamiltonian $2$-forms.
\end{proposition}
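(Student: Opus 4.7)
The plan is to reduce the existence of extremal representatives along every ray of $\grk(M^5_{g,l,\bfw},J_\bfw)$ to the positivity, on $\zeta\in(-1,1)$, of the quartic polynomial $h(\zeta)$ from \eqref{orbiextremalpolynomial}, and then to invoke the Openness Theorem of \cite{BGS06} together with density of quasi-regular rays. Every Reeb vector field in $\grk(M^5_{g,l,\bfw},J_\bfw)$ is proportional to some $\xi_\bfv$, and the rays with $\bfv=(v_1,v_2)\in\bbz^+\times\bbz^+$ coprime form a dense subset. For such a rational ray, Proposition \ref{defsas} and Lemma \ref{quasireglem} identify the quasi-regular K\"ahler quotient with the orbifold pseudo-Hirzebruch surface $(S_n,\grD_\bfv)$ of degree $n=l(w_1v_2-w_2v_1)$, equipped with the K\"ahler class specified by Lemma \ref{qregk}. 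An admissible extremal K\"ahler metric on this quotient lifts, via the orbifold Boothby--Wang construction discussed in Section \ref{ess}, to an extremal Sasaki metric along the ray of $\xi_\bfv$, and by construction its transverse K\"ahler structure carries a Hamiltonian $2$-form.

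Next, I would substitute $v_2=cv_1$, $r=(w_1v_2-w_2v_1)/(w_1v_2+w_2v_1)$, $p=v_2$, $q=v_1$, and $s_{\grS_g}=2(1-g)/(l(w_1v_2-w_2v_1))$ into the formula for $h(\zeta)$ from Section \ref{orbisetup}, and rewrite $\frac{l(cw_1+w_2)^3}{4v_1}\,h(\zeta)$ as the degree-four polynomial in $c$ displayed in Section \ref{extremalincone}, whose coefficients $A_0(\zeta),\ldots,A_4(\zeta)$ are explicit polynomials in $\zeta$. The objective is to show that under $g\leq 1+3lw_2$ every $A_j(\zeta)$ is nonnegative on $[-1,1]$ with $A_0(\zeta)>0$ on $(-1,1)$, forcing $h(\zeta)>0$ for all $c>0$ and $\zeta\in(-1,1)$. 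The coefficients of $c^0$ and $c^4$ are visibly positive on $(-1,1)$, being products of $(1\mp\zeta)^2$ with positive constants; the coefficient of $c^2$ is a concave-down quadratic in $\zeta$ whose values at $\zeta=\pm 1$ are positive, so it is positive on all of $[-1,1]$ regardless of the genus.

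The main step, and the only place the genus hypothesis enters, is the analysis of the coefficients of $c$ and $c^3$. Each factors as a manifestly positive factor $(1\mp\zeta)$ times an affine function of $\zeta$; checking endpoint values shows that the former is nonnegative on $[-1,1]$ iff $g\leq 1+3lw_1$ and the latter iff $g\leq 1+3lw_2$. Since $w_1\geq w_2$, both inequalities are implied by the single hypothesis $g\leq 1+3lw_2$, which is therefore the sharp condition for the coefficient argument to succeed. This is the main obstacle in the proof: once the algebraic fact about where these two affine coefficients change sign is pinned down, the positivity of $h$ follows by summing nonnegative terms.

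Finally, openness of $\ge(\cald,J)$ inside $\grk(\cald,J)$ \cite{BGS06}, combined with density of the rational rays just shown to be extremal, upgrades extremality from a dense subset to all of $\grk(M^5_{g,l,\bfw},J_\bfw)$, yielding $\ge(M^5_{g,l,\bfw},J_\bfw)=\grk(M^5_{g,l,\bfw},J_\bfw)$ and completing the first assertion. For the ``in particular'' statement, $l,w_2\in\bbz^+$ forces $1+3lw_2\geq 4$, so any $g\leq 4$ automatically satisfies the hypothesis, and the admissibility of the constructed metrics ensures the transverse K\"ahler metric admits a Hamiltonian $2$-form.
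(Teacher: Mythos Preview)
Your argument is correct and essentially identical to the paper's: both expand the rescaled $h(\zeta)$ as a degree-four polynomial in $c=v_2/v_1$, verify that the coefficients of $c^0,c^2,c^4$ are positive on $(-1,1)$ for all genera while those of $c$ and $c^3$ are nonnegative precisely when $g\le\min(1+3lw_1,1+3lw_2)=1+3lw_2$, and then pass from the dense set of quasi-regular extremal rays to the whole Sasaki cone via the Openness Theorem of \cite{BGS06}. (One minor slip: $h(\zeta)$ is quadratic in $\zeta$, not quartic --- the quartic structure is in $c$, which you treat correctly.)
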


\begin{example} 
Let us consider the simplest of all cases, namely $(w_1,w_2)=(1,1)$. That is the regular ray is CSC. From Proposition \ref{exhprop} it follows that for $l>1$, the Sasaki cone is exhausted by (admissible) extremal Sasaki metrics for
$g \leq 7$. One may check that for $l=1$ and $g$ equal to $5$ or $6$, the 2nd order polynomial 
$h(\gz)$ is positive for all choices  of $(v_1,v_2)$, so together with Proposition \ref{exhprop} again, this tells us that for $(w_1,w_2)=(1,1)$ and any genus $0<g\leq 6$ the Sasaki cone is exhausted by (admissible) extremal Sasaki metrics. On the other hand for
$(w_1,w_2)=(1,1)$, $l=1$, and $g=7$, we can check that positivity of $h(\gz)$ fails for 
$c=v_2/v_1$ exactly when  $c \in (0,13-2\sqrt{42}) \cup (13+2\sqrt{42},+\infty)$, so the Sasaki cone is not exhausted by admissible extremal metrics in this case.
\end{example}

\begin{proof}[Proof of Theorem \ref{sasexh}]
Using Lemma \ref{evenHir} the estimate in Proposition \ref{exhprop} can be rewritten as 
$$k\geq m+\frac{g-1}{3}.$$
The theorem now follows from Proposition \ref{exhprop}.
\end{proof}

We know from Theorem \ref{deg>0ext} that in general the Sasaki cone is not exhausted by extremal Sasaki metrics.
It is also easy to see that for fixed $(l,w_1, w_2, c \neq w_2/w_1)$, 
$$\lim_{g \rightarrow + \infty} h(0) = - \infty$$
and hence positivity of $h(\gz)$ for $z \in (-1,1)$ fails for large values of $g$. Thus
the part of the cone which have extremal Sasaki metrics
whose transverse K\"ahler metric admits hamiltonian $2$-forms seems to ``shrink''  as $g \rightarrow +\infty$.

Note that if we view $\frac{l(cw_1 + w_2)^3}{4 v_1}h(\gz)$ as a second order polynomial in $\gz$, then the coefficient of $\gz^2$ is $(cw_1-w_2)$ times the left hand side of \eqref{cscallgenera}. Thus for $w_2/w_1 < c < c_{\bf w}$ we have that the graph of $h(\gz)$ is a concave down parabola and since $h(\pm 1) >0$, the positivity of $h(\gz)$ for $\gz \in (-1,1)$ follows.

\begin{proposition}
Consider the Sasaki cone $\grk(M^5_{g,l,\bfw},J_\bfw)$  for some
$l \in  {\mathbb Z}^+$ and co-prime $w_1 \geq w_2 >0 \in {\mathbb Z}$. 
The rays in $\grk(M^5_{g,l,\bfw},J_\bfw)$ which are between the ray
$v_2 = (w_2/w_1)v_1$ and the CSC ray $v_2=c_{\bf w} v_1$ are all extremal.
Thus the two rays belong to the same connected component of the  {\it Sasaki extremal set} $\ge(\cald,J_\bfw)$ in $\grk(M^5_{g,l,\bfw},J_\bfw)$.
\end{proposition}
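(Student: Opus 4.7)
The plan is to reduce the whole statement to a single concavity argument on the polynomial $h(\gz)$ from Section \ref{orbisetup}. Fix a ratio $c = v_2/v_1 \in (w_2/w_1, c_{\bf w})$. By the admissible setup, $F(\gz) = \Theta(\gz)(1+r\gz)$ is constructed via \eqref{orbiextremalpolynomial} precisely so that both the extremality condition and the boundary conditions of \eqref{conesingpositivity} at $\gz = \pm 1$ hold automatically; in particular $h(\pm 1) > 0$. Hence extremality of the corresponding transverse K\"ahler metric reduces to strict positivity of $h(\gz)$ on the open interval $(-1,1)$.

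The key input is the remark immediately preceding the proposition: viewing $\tfrac{l(cw_1+w_2)^3}{4v_1}\,h(\gz)$ as a quadratic in $\gz$, its leading coefficient equals $(cw_1-w_2)$ times the cubic $P(c)$ that forms the left-hand side of \eqref{cscallgenera}. One checks $P(0) = -lw_2^2 < 0$, the leading coefficient of $P$ is $lw_1^2 > 0$, and (by Section \ref{examples}) $c_{\bf w}$ is the unique positive root of $P$; therefore $P(c) < 0$ on $(0, c_{\bf w})$. Since also $cw_1 - w_2 > 0$ on our interval, the $\gz^2$-coefficient is strictly negative, so $h$ is strictly concave in $\gz$. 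A strictly concave function that is positive at both endpoints of an interval is automatically positive throughout that interval, yielding $h(\gz) > 0$ on $(-1,1)$.

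For every rational $c = v_2/v_1$ in the open interval the admissible construction thus produces an extremal quasi-regular Sasakian structure. Rational values are dense in $(w_2/w_1, c_{\bf w})$, so by the Openness Theorem of \cite{BGS06} --- which says that $\ge(\cald,J_\bfw)$ is open in $\grk(M^5_{g,l,\bfw},J_\bfw)$ --- every ray (regular, quasi-regular or irregular) with $c$ in the open interval lies in $\ge(\cald,J_\bfw)$. Finally, the two endpoint rays are already known to be extremal: $c = w_2/w_1$ is the $\bfw$ ray $\bfv = \bfw$, whose transverse structure is the product extremal K\"ahler metric on $\Sigma_g \times \bbc\bbp[\bfw]$, while $c = c_{\bf w}$ is the CSC ray constructed in Section \ref{examples}. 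The full closed interval $[w_2/w_1, c_{\bf w}]$ of rays is therefore a connected subset of $\ge(\cald,J_\bfw)$, placing the two distinguished rays in the same connected component and completing the proof. There is no real obstacle here; the only subtlety is the clean identification of the $\gz^2$-coefficient with $(cw_1-w_2)P(c)$, which is exactly what the preceding remark records.
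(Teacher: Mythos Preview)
Your proof is correct and follows essentially the same approach as the paper: the paper's argument is the short paragraph immediately preceding the proposition, which identifies the $\gz^2$-coefficient of (a positive multiple of) $h(\gz)$ with $(cw_1-w_2)$ times the left-hand side of \eqref{cscallgenera}, observes this is negative for $w_2/w_1 < c < c_{\bf w}$, and concludes via concavity and $h(\pm 1)>0$. You have simply spelled out the sign analysis of $P(c)$ and added the density/openness step and the endpoint extremality explicitly, which the paper leaves implicit.
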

This proves Theorem \ref{connthm}. By the Openess Theorem \cite{BGS06}, the rays sufficiently close to this ``wedge'' are also extremal.

\begin{example}
Let us return to example \ref{highergenuscsc}, that is, let $l=1$, $w_1=12$ and $w_2=1$. Further let $g=23$ so that we know we have a CSC ray for $c=1/6$.
In this case we have
$$
\begin{array}{ccl}
\frac{l(1+12 c)^3}{4 v_1}h(\gz) & = & (1 - \gz)^2\\
\\
& +&  2(1-\gz)(31 -17\gz) c \\
\\
&+ & 24(48-11\gz -35 \gz^2) c^2\\
\\
& +& 144 (1 + \gz) (-15+23 \gz)c^3 \\
\\
&+&1728(1+\gz)^2 c^4\\
\\
&=& 1 + 62 c + 1152 c^2 - 2160 c^3 + 1728 c^4\\
\\
&+& 2 (12 c^2-1) (1 + 48 c + 144 c^2)\gz \\
 \\
& +&  ( 6 c-1) (12 c-1) (1 + 
    52 c + 24 c^2)\gz^2.
\end{array}
$$
Note that $c=1/12$ corresponds to the case when $(v_1,v_2)=(w_1,w_2)$ where the admissible set-up and hence $h(\gz)$ is not defined. The ray $(v_1,v_2)=(w_1,w_2)$ is extremal since the product metric 
$\omega_g + \omega_{\bf w}$ on  
$\Sigma_g \times \bbc\bbp_{w_1,w_2}$ is extremal. When $c=1$ we know, since $g \geq 20$, that the (regular) ray is not extremal. To determine which rays in the rest of the Sasaki cone are extremal with extremal metrics arising from the admissible construction, we will now investigate for which
$c\in (0,+\infty)$ the second order polynomial
$$
\begin{array}{ccl}
p(\gz)& = & 1 + 62 c + 1152 c^2 - 2160 c^3 + 1728 c^4\\
\\
&+ &2 (12 c^2-1) (1 + 48 c + 144 c^2)\gz  +
( 6 c-1) (12 c-1) (1 +  52 c + 24 c^2)\gz^2,
\end{array}$$
hence $h(\gz)$ is positive on the interval $(-1,1)$.

First, notice that $p(\pm 1) >0$ for all $c>0$ and 
$p'(1) = 4 c (-7 - 486 c + 1944 c^2 + 1728 c^3) \leq 0$ for $c \in (0,\tilde{c}]$, where $\tilde{c} \approx 0.22$. 
Therefore, for $c \leq \tilde{c}$,
$p(\gz) >0$ for $-1 < \gz < 1$.
Since $\tilde{c} > 1/6$, we know that for $c>\tilde{c}$, $p(\gz)$ is concave up. 
Moreover, since 
$p'(-1) = -4-4 c (41 - 354 c + 1368 c^2) <0$ for all $c>0$ and $p'(1) > 0$ for all $c > \tilde{c}$, we know that, for $c > \tilde{c}$, if $p(\gz)$ has any root for, then this root is inside the interval $(-1,1)$ and thus,
for $c >\tilde{c}$,
$p(\gz) >0$ for $-1 < \gz < 1$ if and only if the discriminant
$$
D(c) = 16 c^2 (37 + 5820 c + 197748 c^2 - 1528416 c^3 + 1622592 c^4)
$$
of $p(\gz)$ is negative. Now one can verify numerically that $D(c) < 0$ for $\tilde{c} < c < \hat{c}$ and $D(c) \geq 0$ for $c \geq \hat{c}$, where $\hat{c} \approx 0.78$. Thus $p(\gz) >0$ for $-1 < \gz < 1$ if and only if $0< c <\hat{c}$. Hence, in this case, a ray $v_2=c v_1$ in the Sasaki cone is extremal  with extremal metrics arising from the admissible construction if and only if $0< c <\hat{c}$. 
\end{example}

\subsection{Scalar Curvature}
For a given Sasaki cone $\grk(M^5_{g,l,\bfw},J_\bfw)$ assume we have a choice of co-prime
$(v_1,v_2)$ such that the corresponding ray is extremal and that the transverse K\"ahler metric admits a hamiltonian $2$-form of order $1$. That is, the transverse K\"ahler metric is $(1/{4\pi})$ times an admissible extremal K\"ahler metric \eqref{metric} with 
$s_{\Sigma_g} = 2(1-g)/(l(w_1v_2 - w_2 v_1))$,  
$\Theta(\gz) = \frac{F(\gz)}{(1+r\gz)}= \frac{(1-\gz^2)h(\gz)}{4pq (3 - r^2)(1+r\gz)}$, 
 $ r= \frac{w_1v_2 - w_2 v_1}{w_1v_2 + w_2 v_1}$ and 
$h(\gz)$ given as in the beginning of Section \ref{orbisetup} (where $p=v_2$ and $q=v_1$).
For example from Equation (10) in \cite{ACGT08} it then follows that the scalar curvature $s_g^T$ (which should be a linear function of $\gz$)  of the transverse K\"ahler metric equals
$$ 4\pi\left (\frac{2 r s_{\Sigma_g} -F''(\gz)}{(1+r\gz)} \right ) = A_{l,{\bf w},{\bf v}} + B_{l,{\bf w},{\bf v}}\,  \gz ,$$
where
$$A_{l,{\bf w},{\bf v}} =\frac{24 \pi (v_1 w_2 (1 - g + l w_1) + v_2 w_1 (1 - g + l w_2))}{l (v_2^2 w_1^2 + 4 v_1 v_2 w_1 w_2 + v_1^2 w_2^2)}$$
and 
$$B_{l,{\bf w},{\bf v}}=\frac{24 \pi (l v_2^3 w_1^2 - v_1^2 v_2 w_2(g - 1 + 2 l w_1)  - l v_1^3 w_2^2 + 
  v_1 v_2^2 w_1 ( g -1+ 2 l w_2))}{l v_1 v_2 (v_2^2 w_1^2 + 4 v_1 v_2 w_1 w_2 + v_1^2 w_2^2)}.$$
Then $B_{l,{\bf w},{\bf v}}=0$ corresponds to the scalar curvature being constant.

\begin{example}\label{genus1cscvalue}
For the CSC family in Example \ref{genus1csc} we calculate that the transverse constant scalar curvature 
$s_g^T = \frac{24\pi t}{(1+2t)v_1}$ and thus the Sasakian scalar curvature $s_g =  \frac{24\pi t}{(1+2t)v_1}-4$.
\end{example}

\begin{example}\label{genushighercscfamilyvalue}
For the CSC family in Example \ref{genushighercscfamily}, the tranverse constant scalar curvature does not depend on the genus $g \geq 2$. In fact,  $s_g^T = 8\pi/3$, and so $s_g =  8\pi/3-4$.
\end{example}

\begin{example}\label{highergenuscscvalue}
In Example \ref{highergenuscsc} we saw that for genus $g=23$, $l=1$, $w_1=12$, $w_2=1$, $v_1=6$, and $v_2=1$ we have a CSC metric. Here we calculate that the transverse constant scalar curvature $s_g^T = -16\pi$, and so $s_g =   -16\pi-4$.
\end{example}

\bigskip 

These examples show that both of the ranges for scalar curvature, $s_g>-4$ and $s_g<-4$, as discussed near the end of Section \ref{ess}, are realized. Moreover, the range $s_g>-4$ is realized for all genera.

If a Sasaki metric on $\Sigma_g \times S^3$ and $\Sigma_g \tilde{\times} S^3$ has constant scalar curvature $s_g=-4$, then this scalar curvature is invariant under transverse homothety. That is, the entire ray of Sasaki metrics determined by this Sasaki metric has constant scalar curvature $s_g =-4$. Correspondingly the transverse K\"ahler metrics are scalar flat;  $s_g^T =0$. We now present some examples of such Sasaki metrics.

First the Boothby-Wang construction over $\Sigma_g\times\bbc\bbp^1$ with $g\geq 2$, taking scalar curvatures $-a$ and $a$ on the two factors clearly yields regular Sasaki metrics with $s_g = -4$ on $\Sigma_g\times S^3$ for any $a>0$. These metrics cannot be equivalent since $\Sigma_g\times \bbc\bbp^1$ does not admit a homothety diffeomorphism.

For a general Sasaki cone $\grk(M^5_{g,l,\bfw},J_\bfw)$ with ${\bf w} \neq (1,1)$ and $g \geq 2$, we can investigate if the CSC rays we have discovered might have scalar curvature $-4$, that is, whether the transverse admissible K\"ahler structure has scalar curvature $s_g^T= 0$. Note that this would be equivalent to 
$$A_{l,{\bf w},{\bf v}} =B_{l,{\bf w},{\bf v}} =0.$$
Looking at the formula for $A_{l,{\bf w},{\bf v}}$ we see right away that this cannot vanish for genus $g=2$. Let us hence assume that $g >2$. 
It is easy to see that $A_{l,{\bf w},{\bf v}} =0$ if and only if
$$v_2/v_1= \frac{(lw_1 - (g-1))w_2}{((g-1)-lw_2)w_1},$$
where $lw_2 < g-1$ is a necessary requirement. Then the condition $B_{l,{\bf w},{\bf v}} =0$
(i.e. $v_2/v_1= c_{\bf w}$) becomes
$$\left (l^2 w_1 w_2 - (g-1)^2 \right)\left(l^2 w_1^2 + 2l(g-1-2lw_2) w_1 +l^2w_2 - 2(g-1)( l w_2 + (g-1)\right)=0.$$
By using the quadratic formula one easily sees that
$$l^2 w_1^2 + 2l(g-1-2lw_2) w_1 +l^2w_2 - 2(g-1)( l w_2 + (g-1))=0$$ has no solutions 
for $w_1,w_2,l ,g \in \bbz^+$ with $lw_2 < g-1$.
Thus $s_g^T = 0$ comes down to the equation
$$l^2 w_1 w_2 - (g-1)^2 = 0,$$
which can be solved by $(l,w_1,w_2) = (1, (g-1)^2,1)$. If $(g-1)$ is not a prime number there are other possibilities as well.  

If $g \geq 3$ is odd, the above solution yields a ray of quasi-regular Sasaki metrics on $\Sigma_g \tilde{\times} S^3$ with constant scalar curvature $s_g =-4$. To see this, note that $l(w_1+w_2) = (g-1)^2+1$ is odd and use Theorem \ref{diffeotype2}. On the other hand, if  $g=2k+1 \geq 5$ is odd, then we can choose  $(l,w_1,w_2)=(2,k^2,1)$ as another solution and now $l(w_1+w_2) = 2(k^2+1)$ is even,
yielding quasi-regular Sasaki metrics on $\Sigma_g {\times} S^3$ with constant scalar curvature $s_g =-4$. 

If $g \geq 4$ is even, then $(g-1)$ is odd and so any solution $(l,w_1,w_2)$ of $l^2 w_1 w_2 - (g-1)^2 = 0$ must satisfy that
$l^2 w_1 w_2$ is odd and so in particular $w_1$ and $w_2$ are both odd and hence
$l(w_1+w_2)$ is even. This means that these solutions give quasi-regular Sasaki metrics with constant scalar curvature $s_g =-4$ on $\Sigma_g {\times} S^3$. 
Together with the regular Sasaki metrics with constant scalar curvature $s_g =-4$ coming from the Boothby-Wang construction we then have the following conclusion.
\begin{proposition}
For genus $g \geq 2$ there are regular Sasaki metrics with constant scalar curvature $s_g =-4$ on $\Sigma_g {\times} S^3$.
For genus $g= 3$ there are quasi-regular Sasaki metrics with constant scalar curvature $s_g =-4$
on $\Sigma_g \tilde{\times} S^3$.
For genus $g \geq 5$ and odd there are quasi-regular Sasaki metrics with constant scalar curvature $s_g =-4$
on $\Sigma_g \tilde{\times} S^3$ as well as $\Sigma_g {\times} S^3$. 
For genus $g \geq 4$ and even there are quasi-regular Sasaki metrics with constant scalar curvature $s_g = - 4$ on $\Sigma_g {\times} S^3$.
\end{proposition}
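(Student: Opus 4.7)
The key observation is that for a $5$-dimensional Sasaki metric, $s_g = -4$ is equivalent to transverse scalar flatness $s_g^T = 0$. For the admissible transverse K\"ahler metrics built above, $s_g^T = A_{l,\bfw,\bfv} + B_{l,\bfw,\bfv}\,\gz$ is linear in $\gz$ on a nondegenerate interval, so the condition is equivalent to the system $A_{l,\bfw,\bfv} = B_{l,\bfw,\bfv} = 0$. The argument then splits naturally into the regular case and the quasi-regular case.

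For the regular statement, I would endow $\grS_g \times \bbc\bbp^1$ with the product K\"ahler metric whose factors have scalar curvatures $-a$ and $+a$ respectively, for any $a>0$; the resulting product metric is scalar-flat K\"ahler, and its Boothby-Wang lift gives a regular Sasaki metric on $\grS_g\times S^3$ with $s_g = -4$. This handles the first claim for all $g \geq 2$ in one stroke.

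For the quasi-regular claims, the plan is to solve $A_{l,\bfw,\bfv}=0$ for $v_2/v_1$, yielding
\[
\frac{v_2}{v_1} = \frac{(lw_1-(g-1))w_2}{((g-1)-lw_2)w_1},
\]
which requires $lw_2 < g-1$ and in particular $g\geq 3$. Substituting into $B_{l,\bfw,\bfv}=0$ and clearing denominators should produce a factorization of the shape $\bigl(l^2 w_1 w_2 - (g-1)^2\bigr)\cdot Q(l,\bfw,g) = 0$, where
\[
Q = l^2 w_1^2 + 2l(g-1-2lw_2)w_1 + l^2 w_2 - 2(g-1)(lw_2+(g-1)).
\]
Applying the quadratic formula to $Q=0$ in $w_1$ shows no positive integer solutions exist under $lw_2 < g-1$, so the problem reduces to the Diophantine equation $l^2 w_1 w_2 = (g-1)^2$. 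Verifying this polynomial factorization cleanly is the only nontrivial algebraic step, and will be the main technical obstacle.

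The case analysis then follows by invoking Theorem \ref{diffeotype2}, which distinguishes the two bundle types by the parity of $l(w_1+w_2)$. The choice $(l,w_1,w_2) = (1,(g-1)^2,1)$ always solves $l^2 w_1 w_2 = (g-1)^2$ with $\gcd(w_1,w_2)=1$ and $lw_2 < g-1$ for $g\geq 3$; when $g$ is odd, $(g-1)^2+1$ is odd and yields $\grS_g\tilde\times S^3$, while when $g\geq 4$ is even, $g-1$ odd forces $l,w_1,w_2$ all odd, so $l(w_1+w_2)$ is even, producing $\grS_g \times S^3$. For $g = 2k+1 \geq 5$, the additional solution $(l,w_1,w_2) = (2, k^2, 1)$ has $l(w_1+w_2) = 2(k^2+1)$ even, giving the remaining $\grS_g\times S^3$ case. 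A residual check is that the resulting ratio $v_2/v_1$ is positive and reduces to coprime integers defining a genuine quasi-regular ray in $\grk(\cald,J_\bfw)$; positivity follows from $0 < lw_2 < g-1 < lw_1$ for these specific choices, and the ray is admissible CSC by the earlier classification used in the proof of Theorem \ref{thm1}.
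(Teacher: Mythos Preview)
Your proposal is correct and follows essentially the same approach as the paper: the regular case via a scalar-flat product on $\grS_g\times\bbc\bbp^1$, and the quasi-regular case by solving $A_{l,\bfw,\bfv}=B_{l,\bfw,\bfv}=0$, reducing via the same factorization to $l^2w_1w_2=(g-1)^2$, and then running the identical parity analysis with the solutions $(1,(g-1)^2,1)$ and $(2,k^2,1)$ together with Theorem~\ref{diffeotype2}. Your added residual check on the positivity and rationality of $v_2/v_1$ is a welcome detail the paper leaves implicit.
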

\begin{remark}
It is possible that another construction would yield quasi-regular Sasaki metrics with constant scalar curvature $s_g =-4$ in the cases we seem to be missing above.
\end{remark}

\def\cprime{$'$} \def\cprime{$'$} \def\cprime{$'$} \def\cprime{$'$}
  \def\cprime{$'$} \def\cprime{$'$} \def\cprime{$'$} \def\cprime{$'$}
  \def\cdprime{$''$} \def\cprime{$'$} \def\cprime{$'$} \def\cprime{$'$}
  \def\cprime{$'$}
\providecommand{\bysame}{\leavevmode\hbox to3em{\hrulefill}\thinspace}
\providecommand{\MR}{\relax\ifhmode\unskip\space\fi MR }
\providecommand{\MRhref}[2]{%
  \href{http://www.ams.org/mathscinet-getitem?mr=#1}{#2}
}
\providecommand{\href}[2]{#2}


\begin{thebibliography}{ACGTF11}

\bibitem[ACG06]{ApCaGa06}
Vestislav Apostolov, David M.~J. Calderbank, and Paul Gauduchon,
  \emph{Hamiltonian 2-forms in {K}\"ahler geometry. {I}. {G}eneral theory}, J.
  Differential Geom. \textbf{73} (2006), no.~3, 359--412. \MR{MR2228318
  (2007b:53149)}

\bibitem[ACGTF04]{ACGT04}
V.~Apostolov, D.~M.~J. Calderbank, P.~Gauduchon, and C.~W.
  T{\o}nnesen-Friedman, \emph{Hamiltonian $2$-forms in {K}\"ahler geometry.
  {II}. {G}lobal classification}, J. Differential Geom. \textbf{68} (2004),
  no.~2, 277--345. \MR{2144249}

\bibitem[ACGTF08]{ACGT08}
Vestislav Apostolov, David M.~J. Calderbank, Paul Gauduchon, and Christina~W.
  T{\o}nnesen-Friedman, \emph{Hamiltonian 2-forms in {K}\"ahler geometry.
  {III}. {E}xtremal metrics and stability}, Invent. Math. \textbf{173} (2008),
  no.~3, 547--601. \MR{MR2425136 (2009m:32043)}

\bibitem[ACGTF11]{ACGT11}
\bysame, \emph{Extremal {K}\"ahler metrics on projective bundles over a curve},
  Adv. Math. \textbf{227} (2011), 2385Ð2424.

\bibitem[AH91]{AhHa91}
K.~Ahara and A.~Hattori, \emph{{$4$}-dimensional symplectic {$S^1$}-manifolds
  admitting moment map}, J. Fac. Sci. Univ. Tokyo Sect. IA Math. \textbf{38}
  (1991), no.~2, 251--298. \MR{1127083 (93b:58048)}

\bibitem[AM90]{AkMc90}
Selman Akbulut and John~D. McCarthy, \emph{Casson's invariant for oriented
  homology {$3$}-spheres}, Mathematical Notes, vol.~36, Princeton University
  Press, Princeton, NJ, 1990, An exposition. \MR{1030042 (90k:57017)}

\bibitem[ATF06]{ApTo06}
V.~Apostolov and C.~T{\o}nnesen-Friedman, \emph{A remark on {K}\"ahler metrics
  of constant scalar curvature on ruled complex surfaces}, Bull. London Math.
  Soc. \textbf{38} (2006), no.~3, 494--500. \MR{2239044 (2007f:53087)}

\bibitem[Ati55]{Ati55}
M.~F. Atiyah, \emph{Complex fibre bundles and ruled surfaces}, Proc. London
  Math. Soc. (3) \textbf{5} (1955), 407--434. \MR{0076409 (17,894b)}

\bibitem[Ati57]{Ati57}
\bysame, \emph{Vector bundles over an elliptic curve}, Proc. London Math. Soc.
  (3) \textbf{7} (1957), 414--452. \MR{0131423 (24 \#A1274)}

\bibitem[Aud90]{Aud90}
Mich{\`e}le Audin, \emph{Hamiltoniens p\'eriodiques sur les vari\'et\'es
  symplectiques compactes de dimension {$4$}}, G\'eom\'etrie symplectique et
  m\'ecanique ({L}a {G}rande {M}otte, 1988), Lecture Notes in Math., vol. 1416,
  Springer, Berlin, 1990, pp.~1--25. \MR{1047474 (91f:57013)}

\bibitem[Aud04]{Aud04}
\bysame, \emph{Torus actions on symplectic manifolds}, revised ed., Progress in
  Mathematics, vol.~93, Birkh\"auser Verlag, Basel, 2004. \MR{2091310
  (2005k:53158)}

\bibitem[Bea96]{Bea96}
Arnaud Beauville, \emph{Complex algebraic surfaces}, second ed., London
  Mathematical Society Student Texts, vol.~34, Cambridge University Press,
  Cambridge, 1996, Translated from the 1978 French original by R. Barlow, with
  assistance from N. I. Shepherd-Barron and M. Reid. \MR{1406314 (97e:14045)}

\bibitem[Bel01]{Bel01}
F.~A. Belgun, \emph{Normal {CR} structures on compact 3-manifolds}, Math. Z.
  \textbf{238} (2001), no.~3, 441--460. \MR{2002k:32065}

\bibitem[BG00]{BG00a}
C.~P. Boyer and K.~Galicki, \emph{On {S}asakian-{E}instein geometry}, Internat.
  J. Math. \textbf{11} (2000), no.~7, 873--909. \MR{2001k:53081}

\bibitem[BG08]{BG05}
Charles~P. Boyer and Krzysztof Galicki, \emph{Sasakian geometry}, Oxford
  Mathematical Monographs, Oxford University Press, Oxford, 2008. \MR{MR2382957
  (2009c:53058)}

\bibitem[BGM06]{BGM06}
C.~P. Boyer, K.~Galicki, and P.~Matzeu, \emph{On eta-{E}instein {S}asakian
  geometry}, Comm. Math. Phys. \textbf{262} (2006), no.~1, 177--208.
  \MR{2200887 (2007b:53090)}

\bibitem[BGO07]{BGO06}
Charles~P. Boyer, Krzysztof Galicki, and Liviu Ornea, \emph{Constructions in
  {S}asakian geometry}, Math. Z. \textbf{257} (2007), no.~4, 907--924.
  \MR{MR2342558 (2008m:53103)}

\bibitem[BGS08]{BGS06}
Charles~P. Boyer, Krzysztof Galicki, and Santiago~R. Simanca, \emph{Canonical
  {S}asakian metrics}, Commun. Math. Phys. \textbf{279} (2008), no.~3,
  705--733. \MR{MR2386725}

\bibitem[BGS09]{BGS07b}
\bysame, \emph{The {S}asaki cone and extremal {S}asakian metrics}, Riemannian
  topology and geometric structures on manifolds, Progr. Math., vol. 271,
  Birkh\"auser Boston, Boston, MA, 2009, pp.~263--290. \MR{MR2494176}

\bibitem[Boy11]{Boy10b}
Charles~P. Boyer, \emph{Extremal {S}asakian metrics on {$S^3$}-bundles over
  {$S^2$}}, Math. Res. Lett. \textbf{18} (2011), no.~1, 181--189. \MR{2756009
  (2012d:53132)}

\bibitem[Boy13]{Boy10a}
\bysame, \emph{Maximal tori in contactomorphism groups}, preprint;
  arXiv:math.SG/1003.1903, to appear in Differential Geom. Appl. (2013).

\bibitem[BP12]{BoPa10}
Charles~P. Boyer and Justin Pati, \emph{On the equivalence problem for toric
  contact structures on ${S}^3$-bundles over ${S}^2$}, math SG1204.2209 (2012).

\bibitem[BPVdV84]{BPV}
W.~Barth, C.~Peters, and A.~Van~de Ven, \emph{Compact complex surfaces},
  Ergebnisse der Mathematik und ihrer Grenzgebiete (3) [Results in Mathematics
  and Related Areas (3)], vol.~4, Springer-Verlag, Berlin, 1984. \MR{86c:32026}

\bibitem[BTF12]{BoTo12b}
Charles~P. Boyer and Christina~W. T{\o}nnesen-Friedman, \emph{Sasakian
  manifolds with perfect fundamental groups}, Afr. Diaspora J. Math.
  \textbf{14} (2012), no.~2, 98--117.

\bibitem[BTF13]{BoTo11}
\bysame, \emph{Extremal {S}asakian geometry on ${T}^2\times{ S}^3$ and related
  manifolds}, preprint; arXiv:math.DG/1108.2005; to appear in Compositio
  Mathematica (2013).

\bibitem[Bu{\c{s}}10]{Bus10}
Olgu{\c{t}}a Bu{\c{s}}e, \emph{Deformations of {W}hitehead products,
  symplectomorphism groups, and {G}romov-{W}itten invariants}, Int. Math. Res.
  Not. IMRN (2010), no.~17, 3303--3340. \MR{2680275}

\bibitem[BW58]{BoWa}
W.~M. Boothby and H.~C. Wang, \emph{On contact manifolds}, Ann. of Math. (2)
  \textbf{68} (1958), 721--734. \MR{22 \#3015}

\bibitem[Cal82]{Cal82}
E.~Calabi, \emph{Extremal {K}\"ahler metrics}, Seminar on Differential
  Geometry, Ann. of Math. Stud., vol. 102, Princeton Univ. Press, Princeton,
  N.J., 1982, pp.~259--290. \MR{83i:53088}

\bibitem[CFO08]{CFO07}
Koji Cho, Akito Futaki, and Hajime Ono, \emph{Uniqueness and examples of
  compact toric {S}asaki-{E}instein metrics}, Comm. Math. Phys. \textbf{277}
  (2008), no.~2, 439--458. \MR{MR2358291}

\bibitem[CS12]{CoSz12}
Tristan Collins and Gabor Sz\'ekelyhidi, \emph{K-semistability for irregular
  sasakian manifolds}, preprint; arXiv:math.DG/1204.2230 (2012).

\bibitem[CT05]{ChTi05}
Xiuxiong Chen and Gang Tian, \emph{Uniqueness of extremal {K}\"ahler metrics},
  C. R. Math. Acad. Sci. Paris \textbf{340} (2005), no.~4, 287--290.
  \MR{2121892 (2006h:32020)}

\bibitem[Don85]{Don85}
S.~K. Donaldson, \emph{Anti self-dual {Y}ang-{M}ills connections over complex
  algebraic surfaces and stable vector bundles}, Proc. London Math. Soc. (3)
  \textbf{50} (1985), no.~1, 1--26. \MR{765366 (86h:58038)}

\bibitem[FOW09]{FOW06}
Akito Futaki, Hajime Ono, and Guofang Wang, \emph{Transverse {K}\"ahler
  geometry of {S}asaki manifolds and toric {S}asaki-{E}instein manifolds}, J.
  Differential Geom. \textbf{83} (2009), no.~3, 585--635. \MR{MR2581358}

\bibitem[Fuj92]{fuj92}
Akira Fujiki, \emph{Remarks on extremal {K}\"ahler metrics on ruled manifolds},
  Nagoya Math. J. \textbf{126} (1992), 89--101. \MR{1171594 (94c:58033)}

\bibitem[Gau09]{Gau09}
Paul Gauduchon, \emph{Hirzebruch surfaces and weighted projective planes},
  Riemannian topology and geometric structures on manifolds, Progr. Math., vol.
  271, Birkh\"auser Boston, Boston, MA, 2009, pp.~25--48. \MR{MR2494167}

\bibitem[Gei97]{Gei97}
H.~Geiges, \emph{Normal contact structures on {$3$}-manifolds}, Tohoku Math. J.
  (2) \textbf{49} (1997), no.~3, 415--422. \MR{98h:53046}

\bibitem[Giv96]{Giv96}
Alexander~B. Givental, \emph{Equivariant {G}romov-{W}itten invariants},
  Internat. Math. Res. Notices (1996), no.~13, 613--663. \MR{1408320
  (97e:14015)}

\bibitem[GK07]{GhKo05}
Alessandro Ghigi and J{\'a}nos Koll{\'a}r, \emph{K\"ahler-{E}instein metrics on
  orbifolds and {E}instein metrics on spheres}, Comment. Math. Helv.
  \textbf{82} (2007), no.~4, 877--902. \MR{MR2341843 (2008j:32027)}

\bibitem[GMSW04]{GMSW04a}
J.~P. Gauntlett, D.~Martelli, J.~Sparks, and D.~Waldram,
  \emph{Sasaki-{E}instein metrics on {$S^2\times S^3$}}, Adv. Theor. Math.
  Phys. \textbf{8} (2004), no.~4, 711--734. \MR{2141499}

\bibitem[Hat83]{Hat83}
Allen~E. Hatcher, \emph{A proof of the {S}male conjecture, {${\rm
  Diff}(S^{3})\simeq {\rm O}(4)$}}, Ann. of Math. (2) \textbf{117} (1983),
  no.~3, 553--607. \MR{701256 (85c:57008)}

\bibitem[KL09]{KrLu09}
M.~Kreck and W.~L{\"u}ck, \emph{Topological rigidity for non-aspherical
  manifolds}, Pure Appl. Math. Q. \textbf{5} (2009), no.~3, Special Issue: In
  honor of Friedrich Hirzebruch. Part 2, 873--914. \MR{2532709 (2010g:57026)}

\bibitem[Leg11]{Leg10}
Eveline Legendre, \emph{Existence and non existence of constant scalar
  curvature toric {S}asaki metrics}, Comp. Math. \textbf{147} (2011), no.~5,
  1613--1634.

\bibitem[Ler02]{Ler02b}
E.~Lerman, \emph{On maximal tori in the contactomorphism groups of regular
  contact manifolds}, Math ArXivs: math.SG/0212043 (2002).

\bibitem[LM96]{LaMcD96}
Fran{\c{c}}ois Lalonde and Dusa McDuff, \emph{The classification of ruled
  symplectic {$4$}-manifolds}, Math. Res. Lett. \textbf{3} (1996), no.~6,
  769--778. \MR{1426534 (98b:57040)}

\bibitem[LO08]{LeOn08}
H{\^o}ng-V{\^a}n L{\^e} and Kaoru Ono, \emph{Parameterized {G}romov-{W}itten
  invariants and topology of symplectomorphism groups}, Groups of
  diffeomorphisms, Adv. Stud. Pure Math., vol.~52, Math. Soc. Japan, Tokyo,
  2008, pp.~51--75. \MR{2509705 (2010k:53157)}

\bibitem[LT98]{LiTi98}
Jun Li and Gang Tian, \emph{Virtual moduli cycles and {G}romov-{W}itten
  invariants of general symplectic manifolds}, Topics in symplectic
  {$4$}-manifolds ({I}rvine, {CA}, 1996), First Int. Press Lect. Ser., I, Int.
  Press, Cambridge, MA, 1998, pp.~47--83. \MR{1635695 (2000d:53137)}

\bibitem[Mar70]{Maru70}
Masaki Maruyama, \emph{On classification of ruled surfaces}, Lectures in
  Mathematics, Department of Mathematics, Kyoto University, vol.~3, Kinokuniya
  Book-Store Co. Ltd., Tokyo, 1970. \MR{0276243 (43 \#1990)}

\bibitem[McD88]{McD88}
Dusa McDuff, \emph{The moment map for circle actions on symplectic manifolds},
  J. Geom. Phys. \textbf{5} (1988), no.~2, 149--160. \MR{1029424 (91c:58042)}

\bibitem[McD94]{McD94}
\bysame, \emph{Notes on ruled symplectic {$4$}-manifolds}, Trans. Amer. Math.
  Soc. \textbf{345} (1994), no.~2, 623--639. \MR{1188638 (95a:57034)}

\bibitem[MK06]{MoKo06}
James Morrow and Kunihiko Kodaira, \emph{Complex manifolds}, AMS Chelsea
  Publishing, Providence, RI, 2006, Reprint of the 1971 edition with errata.
  \MR{MR2214741 (2006j:32001)}

\bibitem[MS98]{McDSa}
D.~McDuff and D.~Salamon, \emph{Introduction to symplectic topology}, second
  ed., Oxford Mathematical Monographs, The Clarendon Press Oxford University
  Press, New York, 1998. \MR{2000g:53098}

\bibitem[New02]{New02}
P.~E. Newstead, \emph{Vector bundles on algebraic curves}, Lecture Notes:
  http://www.mimuw.edu.pl/~jarekw/EAGER/Lukecin02.html (2002).

\bibitem[NS65]{NaSe65}
M.~S. Narasimhan and C.~S. Seshadri, \emph{Stable and unitary vector bundles on
  a compact {R}iemann surface}, Ann. of Math. (2) \textbf{82} (1965), 540--567.
  \MR{0184252 (32 \#1725)}

\bibitem[PS10]{PhSt10}
D.~H. Phong and Jacob Sturm, \emph{Lectures on stability and constant scalar
  curvature}, Handbook of geometric analysis, {N}o. 3, Adv. Lect. Math. (ALM),
  vol.~14, Int. Press, Somerville, MA, 2010, pp.~357--436. \MR{2743451
  (2012b:32036)}

\bibitem[RT11]{RoTh11}
Julius Ross and Richard Thomas, \emph{Weighted projective embeddings, stability
  of orbifolds, and constant scalar curvature {K}\"ahler metrics}, J.
  Differential Geom. \textbf{88} (2011), no.~1, 109--159. \MR{2819757}

\bibitem[Rua99]{Rua99}
Yongbin Ruan, \emph{Virtual neighborhoods and pseudo-holomorphic curves},
  Proceedings of 6th {G}\"okova {G}eometry-{T}opology {C}onference, vol.~23,
  1999, pp.~161--231. \MR{1701645 (2002b:53138)}

\bibitem[Sav99]{Sav99}
Nikolai Saveliev, \emph{Lectures on the topology of {$3$}-manifolds}, de
  Gruyter Textbook, Walter de Gruyter \& Co., Berlin, 1999, An introduction to
  the Casson invariant. \MR{1712769 (2001h:57024)}

\bibitem[Sco83]{Sco83}
P.~Scott, \emph{The geometries of {$3$}-manifolds}, Bull. London Math. Soc.
  \textbf{15} (1983), no.~5, 401--487. \MR{84m:57009}

\bibitem[Sei92]{Sei92}
Wolfgang~K. Seiler, \emph{Deformations of ruled surfaces}, J. Reine Angew.
  Math. \textbf{426} (1992), 203--219. \MR{1155754 (93b:14063)}

\bibitem[Suw69]{Suw69}
Tatsuo Suwa, \emph{On ruled surfaces of genus {$1$}}, J. Math. Soc. Japan
  \textbf{21} (1969), 291--311. \MR{0242198 (39 \#3531)}

\bibitem[TF98]{To-Fr98}
Christina~Wiis T{\o}nnesen-Friedman, \emph{Extremal {K}\"ahler metrics on
  minimal ruled surfaces}, J. Reine Angew. Math. \textbf{502} (1998), 175--197.
  \MR{MR1647571 (99g:58026)}

\bibitem[UY86]{UhYa86}
K.~Uhlenbeck and S.-T. Yau, \emph{On the existence of
  {H}ermitian-{Y}ang-{M}ills connections in stable vector bundles}, Comm. Pure
  Appl. Math. \textbf{39} (1986), no.~S, suppl., S257--S293, Frontiers of the
  mathematical sciences: 1985 (New York, 1985). \MR{861491 (88i:58154)}

\end{thebibliography}
\end{document}